\newtheorem{theo}{Theorem}[section]
\newtheorem{defn}[theo]{Definition}
\newtheorem{exa}[theo]{Example}
\newtheorem{prop}[theo]{Proposition}
\newtheorem{lemma}[theo]{Lemma}
\theoremstyle{remark}
\newtheorem{remark}[theo]{Remark}
\def\thm@space@setup{\thm@preskip=0.5cm \thm@postskip=0.5cm}
\newcommand{\implication}[2]{%
  \mbox{$\text{#1}\implies\text{#2}$}.}%
\newcommand{\E}{\mathrm{e}}
\newcommand{\I}{\mathrm{i}}
\newcommand{\R}{\mathbb{R}}
\newcommand{\N}{\mathbb{N}}
\newcommand{\Z}{\mathbb{Z}}
\newcommand{\Tor}{\mathbb{T}}
\newcommand{\esper}{\mathbb{E}}
\newcommand{\proba}{\mathbb{P}}
\newcommand{\ibar}{\bar{\imath}}
\newcommand{\jbar}{\bar{\jmath}}
\newcommand{\PLM}[1]{#1}
\newcommand{\var}{\mathrm{var}}
\newcommand{\spa}{\mathscr{X}}
\newcommand{\spaex}{(\mathcal{X},d,\mu)}
\newcommand{\Ccal}{\mathscr{C}}
\newcommand{\Mcal}{\mathscr{M}}
\setlist[enumerate]{itemsep=10pt,topsep=10pt}
\setlist[itemize]{itemsep=5pt,topsep=5pt}
\title{Fluctuations of the Gromov--Prohorov sample model}
\author{Jacques de Catelan}
\address{\noindent Université Paris-Sud, Faculté des Sciences d’Orsay --- Institut de mathématiques d'Orsay, Bâtiment 307 ---
F-91405 Orsay, France.}
\email{jacques.de-catelan@math.u-psud.fr}
\author{Pierre-Loïc Méliot}
\address{\noindent Université Paris-Sud, Faculté des Sciences d’Orsay --- Institut de mathématiques d'Orsay, Bâtiment 307 ---
F-91405 Orsay, France.}
\email{pierre-loic.meliot@math.u-psud.fr}
\begin{document}
\date{}

\begin{abstract}
In this paper, we study the fluctuations of observables of metric measure spaces which are random discrete approximations $\spa_n$ of a fixed arbitrary (complete, separable) metric measure space $\spa=\spaex$. These observables $\Phi(\spa_n)$ are polynomials in the sense of Greven--Pfaffelhuber--Winter, and we show that for a generic model space $\spa$, they yield asymptotically normal random variables. However, if $\spa$ is a compact homogeneous space, then the fluctuations of the observables are much smaller, and after an adequate rescaling, they converge towards probability distributions which are not Gaussian. Conversely, we prove that if all the fluctuations of the observables $\Phi(\spa_n)$ are smaller than in the generic case, then the measure metric space $\spa$ is compact homogeneous. The proofs of these results rely on the Gromov reconstruction principle, and on an adaptation of the method of cumulants and mod-Gaussian convergence developed by Féray--Méliot--Nikeghbali. As an application of our results, we construct a statistical test of the hypothesis of symmetry of a compact Riemannian manifold.
\end{abstract}
\keywords{Discrete approximation of metric spaces, Gromov--Prohorov topology, combinatorics of the cumulants of random variables.}

\maketitle
\tableofcontents

\section{Introduction}
Let $ \spa = \spaex$ be a metric space which we assume to be complete, separable and equipped with a probability measure $\mu$ over the Borel algebra of $\mathcal{X}$; and $(X_n)_{n \in \mathbb{N}}$ be a sequence of independent random variables with the same law $\mu$. We study here the approximation of $\spa = \spaex$ by the random discrete metric space
\begin{align*}
\spa_n = \left( \mathcal{X}_n = \{X_1,\ldots,X_n\}, d, \frac{1}{n}\sum_{i=1}^n \delta_{X_i}\right)
\end{align*}
for the Gromov-weak topology; we call this discrete approximation the \emph{Gromov--Prohorov random sample model}. The Gromov-weak topology is based on the idea that a sequence of metric measure spaces converges if and only if all finite subspaces sampled from these spaces converge. This is formalized by using real-valued observables called \emph{polynomials} and introduced by Greven, Pfaffelhuber and Winter in \cite{greven2009convergence}: they are the functions $\Phi$ defined by
$$\Phi(\spaex) = \int_{\mathcal{X}^p} \varphi((d(x_i,x_j))_{1\leq i<j\leq p})\,\mu(dx_1)\cdots \mu(dx_p),$$

\noindent where $\varphi : \R^{\binom{p}{2}} \to \R$ is an arbitrary continuous bounded function. By using the theorem of convergence of empirical measures (see \cite[Theorem 3]{Var58}), one proves readily the almost sure convergence of $\spa_n$ toward $\spa$ (see Theorem \ref{theo:as_convergence}). In this paper, we will study the fluctuations of the polynomials $\Phi(\spa_n)$ with respect to their limits $\Phi(\spa)$. The evaluation of a polynomial $\Phi$ on the space $\spa_n$ is a sum of dependent random variables
\begin{align*}
\Phi(\spa_n) = \frac{1}{n^p} \,\sum_{\ibar \in [\![1,n ]\!]^p } \varphi(d(X_{\ibar})),
\end{align*} 
where we abbreviate $\varphi(d(X_{\ibar})) := \varphi((d(X_{i_a},X_{i_b}))_{1\leq a ,b \leq p})$ for a sequence of indices $\ibar=(i_1,\ldots,i_p)$. This dependency between the random variables is sparse: if we associate to these variables a graph describing the dependency between those variables, then when $n$ goes to infinity the maximal degree of a vertex of this graph becomes negligible against the number of vertices (variables). 
\PLM{This sparse dependency leads to central limit theorems, but the limiting distribution is not necessarily Gaussian, and it depends on the size of the variance of $\Phi(\spa_n)$, for which there are two cases.}
\bigskip

We shall see that the variance $\var(S_n(\varphi,\spa))$ with $S_n(\varphi,\spa) = n^p\, \Phi(\spa_n) $ is a polynomial in the variable $n$ with coefficients depending on the function $\varphi$ and the space $ \spa$; this variance is at most of order $n^{2p-1}$ and therefore, $\var(\Phi(\spa_n))$ is of order at most $1/n$. 
\begin{itemize}
	\item In a first part, we study the case where the variance of $\Phi(\spa_n)$ is of order exactly $1/n$. We call this setting the \emph{generic case}, and it corresponds to fluctuations which are asymptotically normal.  We study the combinatorics of the cumulants of the variable $S_n(\varphi,\spa)$ by using the theory of dependency graphs and mod-Gaussian convergence developed recently by Féray, Méliot and Nikeghbali (see \cite{feray2016mod}); and we prove the mod-Gaussian convergence of the sequence $S_n(\varphi,\spa)$ adequately renormalized. This leads to a central limit theorem for the variables 
	$$Y_n(\varphi,\spa) = \frac{\Phi(\spa_n) - \esper[\Phi(\spa_n)]}{\sqrt{\var(\Phi(\spa_n))}};$$ 
	the limiting distribution is the standard Gaussian distribution, and we also obtain the normality zone of this approximation, moderate deviation estimates and a Berry--Esseen inequality (Theorem \ref{theo:generic_case}). 
	In \cite{2017arXiv171206841F}, similar techniques were used in the study of the fluctuations of observables of random graph, random permutation and random integer partition models parametrised respectively by the space of graphons, the space of permutons and the Thoma simplex.
	\item In a second part, we study the case where the variance of $\Phi(\spa_n)$ is at most of order $1/n^2$ for any polynomial $\Phi$. We call this setting a \emph{globally singular point} $\spa$ of the Gromov--Prohorov sample model. It corresponds to the following condition: for any $p \geq 1$ and any $\varphi \in \Ccal_b(\R^{\binom{p}{2}})$,
\begin{align*}
 \sum_{1 \leq i,j \leq p} \mathrm{cov}\left(\varphi(d(X_1,\dots,X_i,\dots,X_p)), \varphi(d(X_1',\dots,X_{j-1}',X_i,X_{j+1}',\dots,X_p'))\right) = 0,
\end{align*}
where $(X_n')_{n \in \N}$ is an independent copy of $(X_n)_{n \in \N}$, and where in each summand the second vector contains all the variables $X_1',\ldots,X_p'$, except $X_j'$ which is replaced by $X_i$. This identity is difficult to analyse: therefore, we shall study the simpler case where each of the covariances in the sum vanishes. In particular, 
\begin{align*}
\mathrm{cov}\left(\varphi(d(X_1,X_2,\dots,X_p)), \varphi(d(X_1,X_2',\dots,X_p'))\right) = 0.
\end{align*}
It turns out that this second identity is equivalent to  $\spa$ being a compact homogeneous space (in the space of metric measure spaces); see Theorem \ref{theo:homogeneous}.
\PLM{We are thus able to relate a probabilistic condition to a geometric condition on the space; this result is a bit surprising, and for instance it ensures that when approximating an ellipse and a circle by the Gromov--Prohorov sample model, the convergence is much faster for the circle and does not have the same kind of asymptotic fluctuations.}
The proof of the equivalence relies notably on Gromov's reconstruction theorem \cite{gromov2007metric}. Now, in this situation, we cannot directly use the theory of mod-Gaussian convergence and dependency graphs in order to prove all the probabilistic results that we obtained in the generic case. However, by using the symmetry of the space, we are able to obtain for this singular case a better upper bound of the cumulants. It allows us to prove a central limit theorem for the random variables $Y_n(\varphi,\spa)$, but the limit is not necessarily the Gaussian distribution; see Theorem \ref{theo:singular_case}.
\end{itemize}   

\noindent The reader might wonder why we consider that replacing the hypothesis "the sums of covariances vanish" by "all the covariances vanish" is a reasonable restriction in the study of the singular models. In fact, we believe that the two conditions are equivalent; we shall say a short bit about this at the beginning of Section \ref{sec:homogeneous_case}, and we plan to address this question in forthcoming works.\bigskip

The theoretical results of this article lead to a better understanding of the possible behaviors of random variables stemming from a \emph{mod-Gaussian moduli space}; this kind of classifying object for random models has been introduced in \cite{2017arXiv171206841F}. Let us restate the previous discussion with this viewpoint. To any point $\spa$ of the space $\mathbb{M}$ of complete separable metric spaces endowed with a probability measures, one can associate a sequence of random models $(\spa_n)_{n \in \N}$ which are discrete approximations of $\spa$, and such that $\spa_n \to_{\proba} \spa$ as $n$ goes to infinity. Moreover, for a generic point $\spa$, an algebra of observables of spaces in $\mathbb{M}$ yields random variables $\Phi(\spa_n)$ such that $n^{-1/2}(\Phi(\spa_n)-\Phi(\spa))$ is always asymptotically normal. However, some special points $\spa \in \mathbb{M}$ yield observables such that $n^{-1/2}(\Phi(\spa_n)-\Phi(\spa))$ always goes to $0$ (in probability). The identification of these singular models $\spa$ is then a natural question, and for those models, one can be interested in the asymptotics of $\Phi(\spa_n)-\Phi(\spa)$ with a different rescaling (here, we shall look at $n^{-1}(\Phi(\spa_n)-\Phi(\spa))$). The exact same approach has been used in \cite{2017arXiv171206841F} with the space of graphons for models of random graphs, the space of permutons for models of random permutations, and the Thoma simplex for models of random integer partitions. Until now, we believed that the singular points of a mod-Gaussian moduli space still yielded observables which were asymptotically normal, albeit with a different rescaling. Indeed, this is what happens for singular graphons (Erd\H{o}s--Rényi random graphs) and for singular models of random integer partitions (Plancherel and Schur--Weyl measures). However this is not a general phenomenon: with the Gromov--Prohorov sample model, we encounter the first known example where singular points yield observables which \emph{are not} asymptotically normal after appropriate rescaling.
\bigskip

An application of our identification of the singular points of the space $\mathbb{M}$ of measured metric spaces is a procedure of statistical testing of the hypothesis of symmetry of a manifold. Suppose given a compact Riemannian manifold $\mathcal{X}$, endowed with its geodesic distance $d$ and with the unique probability measure $\mu$ which is proportional to the volume form of the manifold. We want to know whether $\mathcal{X}$ is a compact homogeneous space (see Theorem \ref{theo:homogeneous} for precise definitions). For instance, assume that one is given a surface homeomorphic to the real sphere $\mathbb{S}^2$ and endowed with a Riemannian structure; one wants to decide whether this structure is the canonical structure of symmetric space $\mathbb{S}^2 = \mathrm{SO}(3)/\mathrm{SO}(2)$. One can observe the manifold $\mathcal{X}$ as follows:
\begin{itemize}
	\item one can take independent random points $x_i$ on $\mathcal{X}$ according to $\mu$;
	\item one can measure the distances $d(x_i,x_j)$ between the observed points.
\end{itemize}
Fix a polynomial $\Phi$ as defined previously. If the triple $\spa =(\mathcal{X},d,\mu)$ is truly a homogeneous space, then the fluctuations of $\Phi(\spa_n)$ tend to be small, of order $n^{-1}$. Therefore, given a $2n$ sample of points $(x_{i},x_{i}')_{1\leq i \leq n}$ and a large threshold $t_\alpha$, if $\spa_{n}$ (respectively, $\spa_n'$) denotes the approximation of $\spa$ constructed from the family of points $\{x_{1},x_{2},\ldots,x_{n}\}$ (respectively, $\{x_1',x_2',\ldots,x_n'\}$), then
$$Z_{n} = n\,|\Phi(\spa_n)-\Phi(\spa_n')|$$
should be smaller than $t_\alpha$ with large probability $1-\alpha$ (for $n$ large). On the contrary, if $(\mathcal{X},d,\mu)$ is not homogeneous, then the fluctuations of $\Phi(\spa_{n})$ are generically of order $\frac{1}{\sqrt{n}}$, so one expects $Z_{n}$ to be larger than $t_\alpha$ with large probability (again, for $n$ large). We make this argument precise at the end of our paper, by describing in details the procedure of statistical hypothesis testing for the symmetry of $\spa$.
\bigskip

\noindent \textbf{Outline of the article.} The paper is organized as follows. In Section \ref{sec:metric_measure_spaces}, we will recall some definitions and facts about metric measure spaces. Section \ref{sec:method_cumulants} introduces the method of cumulants, the theory of dependency graphs and all the probabilistic results that we can obtain from this method. In Section \ref{sec:generic_fluctuations}, we apply this theory to the generic case of the random sample model to get several probabilistic results about the model including a central limit theorem, the normality zone, moderate deviations and a Berry--Esseen bound for the random variables $Y_n(\varphi,\spa)$.\medskip

Section \ref{sec:homogeneous_case} details the singular case, and we prove the equivalence between having a small variance for the model, and $\spa$ being a compact homogeneous space. We obtain also in this case a finer bound on the cumulants, a non-Gaussian central limit theorem for the observables $\Phi(\spa_n)$, and concentration inequalities for these random variables. In Section \ref{sec:circle}, we provide an explicit counterexample for the asymptotic normality of observables of the sample model of an homogeneous space. This section also enables us to explain in more details the combinatorics of moments and cumulants of the polynomial observables, and how to compute them concretely. Finally, Section \ref{sec:statistics} is devoted to the description of the statistical test for symmetry that has been briefly presented above.
\bigskip

\section{Metric measure spaces}\label{sec:metric_measure_spaces}

In this section, we recall the theory of metric measure spaces and of the Gromov–Prohorov topology, following very closely \cite[Section 2]{greven2009convergence}.

\subsection{Definitions}
For any topological space $\mathcal{X}$, we denote $\Ccal_b(\mathcal{X}) $ the set of continuous bounded functions $\mathcal{X} \to \mathbb{R}$; $\Ccal(\mathcal{X}) $ the set of continuous functions $\mathcal{X} \to \mathbb{R}$; $\mathscr{B}(\mathcal{X})$ the set of Borel subsets of $\mathcal{X}$; and $\Mcal^1(\mathcal{X})$ the set of Borel probability measures over $\mathcal{X}$. A measurable map $f : \mathcal{X} \to \mathcal{Y}$ between two topological spaces induces a map $f_* : \Mcal^1(\mathcal{X}) \to \Mcal^1(\mathcal{Y})$ (push-forward of measures): for any Borel subset $A \subset \mathcal{Y}$, $(f_*\mu)(A) = \mu(f^{-1}(A))$.

\begin{defn}
A {\normalfont metric measure space} is a complete and separable metric space $(\mathcal{X},d)$ which is endowed with a probability measure $\mu \in \Mcal^1(\mathcal{X})$. We say that two metric measure spaces $\spaex$ and $(\mathcal{X}',d',\mu')$ are measure-preserving isometric if there exists an isometry $\psi$ between the supports of $\mu$ on $(\mathcal{X},d)$ and of $\mu'$ on $(\mathcal{X}',d')$, such that $\mu' = \psi_* \mu$.  
\end{defn}

We denote $\mathbb{M}$ the space of metric measure spaces (in short, mm-spaces) modulo measure-preserving isometries. In the sequel, unless explicitly stated, given a mm-space $\spaex$, we will always suppose that the space $\mathcal{X}$ is exactly the support of the measure $\mu$.
Let $\spa = \spaex \in \mathbb{M}$ and 
\begin{align*}
\mathbb{R}^{\mathrm{met}} \vcentcolon= \{(d_{i,j})_{1 \leq i < j < \infty} \,\,|\,\,  \forall 1 \leq i < j < k < \infty,\,\,d_{i,j} + d_{j,k} \geq d_{i,k} \}.
\end{align*}
the space of infinite pseudo-distance matrices. We introduce the maps:
\begin{align*}
\iota^\spa \colon \mathcal{X}^\mathbb{N} &\to \mathbb{R}^{\mathrm{met}} \\
(x_n)_{n \in \mathbb{N}} &\mapsto (d(x_i,x_j))_{1 \leq i < j < \infty},
\end{align*}
and 
\begin{align*}
S \colon \mathcal{X} &\to (\mathcal{X}^\mathbb{N} \to \mathcal{X}^\mathbb{N}) \\
x &\mapsto \left(S^x \vcentcolon=  (x_n)_{n \in \mathbb{N}} \mapsto(x,x_0,x_1,x_2,\ldots)\right).
\end{align*}

\begin{defn}
We define the {\normalfont distance matrix distribution} of $\spa$ by
\begin{align*}
\nu^\spa \vcentcolon= (\iota^\spa)_* \mu^{\otimes \mathbb{N}},
\end{align*}
and the {\normalfont pointed distance matrix distribution} by
\begin{align*}
\nu \colon \mathcal{X} &\to \Mcal^1(\mathbb{R}^{\mathrm{met}})
\\
x &\mapsto \nu^x \vcentcolon= (\iota^\spa \circ S^x)_* \mu^{\otimes \mathbb{N}}.
\end{align*}
\end{defn}
The distance matrix distribution characterizes the metric measure space in $\mathbb{M}$. It means that if $\nu^{\spa_1} = \nu^{\spa_2}$, then $\spa_1 $ is measure-preserving isometric to $\spa_2$. This follows from Gromov's reconstruction theorem for metric measure spaces \cite[Paragraph $3 \frac{1}{2}.5$]{gromov2007metric}. 
\medskip

\subsection{Polynomials and the Gromov--Prohorov distance}
We associate to any bounded continuous map $ \varphi \in \Ccal_b(\mathbb{R}^{\binom{p}{2}})$ a map $\Phi = \Phi^{p,\varphi} \colon \mathbb{M} \to \mathbb{R}$ called a \emph{polynomial} on $\mathbb{M}$ and defined by
\begin{align*}
\Phi(\spa=\spaex) = \int_{\mathbb{R}^{\mathrm{met}}} \varphi((d_{i,j})_{1 \leq i < j \leq p})\,\nu^\spa((d_{i,j})_{1 \leq i < j \leq p}),
\end{align*}
We denote $\Pi$ the real algebra of polynomials on $\mathbb{M}$. Applying the definition of the distance-matrix distribution as a pushed-forward measure, we have
\begin{align*}
\Phi(\spaex) = \int_{\mathcal{X}^p} \varphi((d(x_i,x_j))_{1 \leq i < j \leq p})\,\mu^{\otimes p}(x_1,\dots,x_p).
\end{align*}
\begin{defn}
The Gromov-weak topology is the initial topology on $\mathbb{M}$ associated to the family of polynomials $(\Phi^{p,\varphi})_{p,\varphi}$. In the sequel we endow $\mathbb{M}$ with this topology.
\end{defn}

\medskip

The Gromov-weak topology can be metrized by the Gromov--Prohorov distance, where we optimally embed the two metric measure spaces into a common mm-space and then take the Prohorov distance between the image measures. Given $\mu$ and $\nu$ two probability measures on a metric space $(\mathcal{Z},d_{\mathcal{Z}})$, their Prohorov distance is
\begin{align*}
d_{\mathrm{Pr}}^{(\mathcal{Z},d_{\mathcal{Z}})} = \inf \{\epsilon > 0 \, | \, \forall A \in \mathscr{B}(\mathcal{Z}), \mu(A) \leq \nu(A^\epsilon)+\epsilon, \nu(A) \leq \mu(A^\epsilon)+\epsilon \},
\end{align*}
where $A^\epsilon=\{z \in \mathcal{Z}\,|\,d_{\mathcal{Z}}(z,A)<\epsilon\}$. It is well known to metrise the weak convergence of probability measures in $\mathscr{M}^1(\mathcal{Z})$ \cite[Theorem 6.8]{billing}.

\begin{defn}
The Gromov--Prohorov distance between two mm-spaces $\spa = (\mathcal{X},d_\mathcal{X},\mu_\mathcal{X})$ and $\mathscr{Y} = (\mathcal{Y},d_\mathcal{Y},\mu_\mathcal{Y})$ in $\mathbb{M}$ is defined by
\begin{align*}
d_{\mathrm{GPr}}(\spa,\mathscr{Y}) = \inf_{(\varphi_\mathcal{X},\varphi_\mathcal{Y},\mathcal{Z})} d_{\mathrm{Pr}}^{(\mathcal{Z},d_{\mathcal{Z}})}((\psi_\mathcal{X})_* \mu_\mathcal{X}, (\psi_\mathcal{Y})_* \mu_\mathcal{Y}),
\end{align*}
where the infimum is taken over all pairs of isometric embeddings $\psi_{\mathcal{X}}$ and $\psi_{\mathcal{Y}}$ from $(\mathcal{X},d_\mathcal{X})$ and $(\mathcal{Y},d_\mathcal{Y})$ into some common metric space $(\mathcal{Z},d_{\mathcal{Z}})$.
\end{defn}

\begin{theo}
Given a sequence of mm-spaces $(\spa_n=(\mathcal{X}_n,\mu_n,d_n))_{n \in \N}$ and another mm-space $\spa=(\mathcal{X},d,\mu)$ in $\mathbb{M}$, the following assertions are equivalent:
\begin{enumerate}
	\item The sequence $(\spa_n)_{n \in \N}$ converges to $\spa$ with respect to the Gromov--Prohorov distance.
	\item The sequence of distance matrix distributions $(\nu^{\spa_n})_{n \in \N}$ converges weakly to $\nu^\spa$. 
	\item The sequence $(\spa_n)_{n \in \N}$ converges to $\spa$ with respect to the Gromov--weak topology: for any polynomial $\Phi^{p,\varphi}$ associated to a bounded and continuous function $\varphi \in \mathscr{C}_b(\R^{\binom{p}{2}})$, we have $\Phi^{p,\varphi}(\spa_n) \to_{n \to \infty} \Phi^{p,\varphi}(\spa)$.
	\item For any $p \geq 2 $ and any compactly supported and continuous function $\varphi \in \mathscr{C}_c(\R^{\binom{p}{2}})$, we have $\Phi^{p,\varphi}(\spa_n) \to_{n \to \infty} \Phi^{p,\varphi}(\spa)$.
\end{enumerate}
Furthermore, the metric space $(\mathbb{M}, d_{\mathrm{GPr}})$ is complete and separable, so the space $\mathbb{M}$ is polish.
\end{theo}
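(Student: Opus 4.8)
The plan is to follow the approach of Greven, Pfaffelhuber and Winter \cite{greven2009convergence}: I would first reduce everything to statements about the distance matrix distributions, proving the cycle $(2)\Rightarrow(3)\Rightarrow(4)\Rightarrow(2)$ and the equivalence $(1)\Leftrightarrow(2)$, and then establish completeness and separability. That $d_{\mathrm{GPr}}$ is a genuine metric is part of the package: symmetry is clear, the triangle inequality follows from the standard gluing of two isometric embeddings along a common factor, and $d_{\mathrm{GPr}}(\spa,\mathscr{Y})=0\Rightarrow\spa\cong\mathscr{Y}$ will drop out of $(1)\Rightarrow(2)$ applied to a constant sequence together with the reconstruction theorem for mm-spaces.

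\textbf{The cycle $(2)\Rightarrow(3)\Rightarrow(4)\Rightarrow(2)$.} I would view $\R^{\mathrm{met}}$ as a closed subset of $\R^{\binom{\N}{2}}$ with the product topology, so that it is Polish. A polynomial $\Phi^{p,\varphi}$ is precisely the integral against $\nu^\spa$ of the bounded continuous cylinder function depending only on $(d_{i,j})_{1\le i<j\le p}$; hence $(2)\Rightarrow(3)$ is immediate and $(3)\Rightarrow(4)$ is trivial since $\Ccal_c\subseteq\Ccal_b$. For $(4)\Rightarrow(2)$ the one real point is to prove tightness of $(\nu^{\spa_n})_n$ in $\Mcal^1(\R^{\mathrm{met}})$, which I would do marginal by marginal: for each $K$ pick $g_K\in\Ccal_c(\R)$ with $\mathbbm{1}_{[0,K]}\le g_K\le\mathbbm{1}_{[0,K+1]}$ and use, for the polynomial with $p=2$ and $\varphi=g_K$, the convergence $\Phi^{2,g_K}(\spa_n)\to\Phi^{2,g_K}(\spa)\ge\nu^\spa(d_{1,2}\le K)$; choosing $K$ so that the right-hand side exceeds $1-\varepsilon$, and invoking that each $\nu^{\spa_n}$ is a probability measure, gives $\sup_n\nu^{\spa_n}(d_{1,2}>K')<\varepsilon$ for $K'$ large. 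Since $\nu^\spa$ is invariant under finite permutations of the sampled points, every coordinate $d_{i,j}$ has the law of $d_{1,2}$, so the family is coordinate-wise tight, hence tight on the countable product. Any weak limit point of $(\nu^{\spa_n})_n$ then agrees with $\nu^\spa$ on all compactly supported cylinder functions by $(4)$, hence equals $\nu^\spa$; so $\nu^{\spa_n}\to\nu^\spa$ weakly, which is $(2)$.

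\textbf{The equivalence $(1)\Leftrightarrow(2)$.} For $(1)\Rightarrow(2)$: from optimal embeddings $\psi_n\colon\mathcal{X}_n\hookrightarrow\mathcal{Z}_n$ and $\psi_n'\colon\mathcal{X}\hookrightarrow\mathcal{Z}_n$ with $d_{\mathrm{Pr}}^{(\mathcal{Z}_n,d_{\mathcal{Z}_n})}((\psi_n)_*\mu_n,(\psi_n')_*\mu)\to 0$, Strassen's theorem yields couplings $\pi_n$ of $\mu_n$ and $\mu$ under which $d_{\mathcal{Z}_n}(\psi_n(x),\psi_n'(y))<\varepsilon_n$ with $\pi_n$-probability at least $1-\varepsilon_n$, $\varepsilon_n\to0$; sampling i.i.d.\ pairs from $\pi_n$ and using that $\psi_n,\psi_n'$ are isometric shows that the sampled distance matrix of $\spa_n$ converges in probability to that of $\spa$, i.e.\ $\nu^{\spa_n}\to\nu^\spa$. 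The harder direction $(2)\Rightarrow(1)$: here I would use that any mm-space is the almost sure $d_{\mathrm{GPr}}$-limit of the finite empirical subspaces $\mathscr{Y}^{(K)}$ built from i.i.d.\ samples of its measure (Theorem~\ref{theo:as_convergence}), and that $\mathscr{Y}^{(K)}$ depends on $\mathscr{Y}$ only through the distance matrix of the $K$ sampled points. Realising $\nu^{\spa_n}$ and $\nu^\spa$ on a common probability space by Skorokhod's representation, the $K\times K$ sampled distance matrices of $\spa_n$ and of $\spa$ become arbitrarily close for $n$ large; matching the sampled points then embeds $\spa_n^{(K)}$ and $\spa^{(K)}$ into a common finite metric space with small Prohorov distance, and the triangle inequality $d_{\mathrm{GPr}}(\spa_n,\spa)\le d_{\mathrm{GPr}}(\spa_n,\spa_n^{(K)})+d_{\mathrm{GPr}}(\spa_n^{(K)},\spa^{(K)})+d_{\mathrm{GPr}}(\spa^{(K)},\spa)$ finishes. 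The delicate point — and the main obstacle of the whole proof — is the uniformity in $n$ of the empirical approximation, namely that $d_{\mathrm{GPr}}(\spa_n,\spa_n^{(K)})\to0$ in probability as $K\to\infty$ uniformly over the convergent sequence $(\spa_n)_n$; this is the content of the corresponding lemma of \cite{greven2009convergence}, and it rests precisely on the tightness established above, which bounds the tails of the sampled distances uniformly in $n$.

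\textbf{Completeness and separability.} For separability, the finite mm-spaces with rational cardinality, rational pairwise distances and rational weights form a countable set, which is $d_{\mathrm{GPr}}$-dense by the empirical approximation followed by a rational perturbation. For completeness, let $(\spa_n)_n$ be $d_{\mathrm{GPr}}$-Cauchy; the coupling argument of $(1)\Rightarrow(2)$ shows $(\nu^{\spa_n})_n$ is weakly Cauchy, and the tightness argument above — now driven by the Cauchy property rather than by a candidate limit — shows it is tight, so $\nu^{\spa_n}\to\rho$ weakly for some $\rho\in\Mcal^1(\R^{\mathrm{met}})$. I would then check that $\rho$ is itself a distance matrix distribution: the set $\{\nu^\spa:\spa\in\mathbb{M}\}$ is weakly closed, being cut out by the closed conditions that the random matrix be exchangeable, sampling-consistent, and such that the empirical measures attached to its rows converge almost surely. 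By the reconstruction theorem, $\rho=\nu^{\spa_\infty}$ for a unique $\spa_\infty\in\mathbb{M}$, and then $\spa_n\to\spa_\infty$ in $d_{\mathrm{GPr}}$ by $(2)\Rightarrow(1)$, which proves completeness; hence $\mathbb{M}$ is polish.
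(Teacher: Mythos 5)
Your proposal is essentially a reconstruction of the Greven--Pfaffelhuber--Winter proof, whereas the paper simply cites \cite{greven2009convergence} (Theorems 1 and 5) for the equivalence of (1)--(3) and for polishness, and only supplies the short argument for the new item (4): namely that (4) is vague convergence of the $\nu^{\spa_n}$, which coincides with weak convergence once the limit is known to be a probability measure. Your cycle $(2)\Rightarrow(3)\Rightarrow(4)\Rightarrow(2)$ is correct, and your $(4)\Rightarrow(2)$ step is a legitimate, more explicit alternative to the paper's one-line appeal to Kallenberg: the marginal-by-marginal tightness via $\Phi^{2,g_K}$ together with exchangeability of $\nu^{\spa}$ under finite permutations does give tightness on the countable product, and compactly supported cylinder functions do separate probability measures there. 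Your treatment of $(1)\Leftrightarrow(2)$ (Strassen coupling one way, empirical approximation plus the uniform-in-$n$ lemma the other way) follows GPW's architecture and is sound as a sketch, though you are, like the paper, ultimately deferring the hardest uniformity estimate to \cite{greven2009convergence}.

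The genuine gap is in your completeness argument. You claim that $\{\nu^{\spa} : \spa \in \mathbb{M}\}$ is weakly closed in $\Mcal^1(\R^{\mathrm{met}})$ because it is ``cut out by the closed conditions'' of exchangeability, sampling-consistency, and almost sure convergence of the empirical measures attached to the rows. The first two conditions are indeed closed, but they only carve out the exchangeable laws, which include nontrivial mixtures $\tfrac{1}{2}\nu^{\spa}+\tfrac{1}{2}\nu^{\mathscr{Y}}$ that are \emph{not} distance matrix distributions of any single mm-space; what excludes these is precisely the third, ergodicity-type condition from the Vershik--Gromov reconstruction theorem, and an almost-sure statement about a functional of the sample is not a condition preserved under weak convergence of laws in any evident way (ergodic measures for a group action typically form a non-closed subset of the invariant ones). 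In fact, closedness of the image of $\spa\mapsto\nu^{\spa}$ is essentially \emph{equivalent} to the completeness you are trying to prove, so the argument is circular as stated. The standard route (GPW, Lemma 5.7) avoids the distance matrix picture entirely for this step: given a $d_{\mathrm{GPr}}$-Cauchy sequence, one constructs a single complete separable metric space $(\mathcal{Z},d_{\mathcal{Z}})$ into which all the $\mathcal{X}_n$ embed isometrically so that the pushed-forward measures are Prohorov--Cauchy, takes their weak limit $\mu_\infty$ there, and defines the limit mm-space as $(\mathrm{supp}(\mu_\infty),d_{\mathcal{Z}},\mu_\infty)$. You would need either to carry out that gluing construction or to give a genuine proof that the set of distance matrix distributions is weakly closed; as written, this step does not stand.
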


\begin{proof}
The equivalence of the three first points and the polish character are respectively Theorems 5 and Theorem 1 in \cite[Theorem 5]{greven2009convergence}; we also refer to \cite{Lohr13} for further details on the Gromov--Prohorov metric. We have obviously (3) $\Rightarrow$ (4). Conversely, note that (4) amounts to the \emph{vague} convergence of the distance matrix distributions $\nu^{\spa_n}$ towards $\nu^{\spa}$. However, for probability measures, vague convergence and weak convergence are equivalent (the difference is that for vague convergence we can have a positive mass that escapes to infinity, but this does not happen if we specify the limit and if this limit is a probability measure); see \cite[Lemma 5.20]{Kallenberg02}. Therefore, (4) $\Rightarrow$ (2).
\end{proof}

\begin{remark}\label{remark:dense}
As a consequence of the fourth item in the theorem above and of the Stone--Weierstrass theorem, in order to control the Gromov-weak topology, we can use a \emph{countable} family $H$ of polynomials $(\Phi^{p,\varphi})_{p,\varphi}$  associated to functions $\varphi \colon \mathbb{R}^{\binom{p}{2}} \to \mathbb{R}$ with compact support.
\end{remark}

\subsection{Almost sure convergence of the sample model}\label{sub:almost_sure_convergence}
Let $ \spa = \spaex$ in $\mathbb{M}$ and $(X_n)_{n \in \mathbb{N}}$ be a sequence of random and independent variables with the same law $\mu$. We define
\begin{align*}
\spa_n = \left( \mathcal{X}_n = \{X_1,\dots,X_n\}, {d|}_{\mathcal{X}_n}, \mu_n = \frac{1}{n}\sum_{i=1}^n \delta_{X_i}\right).
\end{align*}
Then, taking $\Phi \in H$ (see Remark~\ref{remark:dense}), we have
\begin{align*}
\Phi(\spa_n) &=  \int_{\mathcal{X}^p} \varphi((d(x_i,x_j))_{1 \leq i < j \leq p})\,\mu_n^{\otimes p}(x_1,\dots,x_p)\\
&{\longrightarrow}_{n \to \infty} \int_{\mathcal{X}^p} \varphi((d(x_i,x_j))_{1 \leq i < j \leq p})\,\mu^{\otimes p}(x_1,\dots,x_p) = \Phi(\mathcal{X}).
\end{align*}
Indeed, $\mu_n$ converges almost surely to $\mu$ for the weak topology of probability measures (see for instance \cite{Var58}), so the same is true for $\mu^{\otimes p}_n$ toward $\mu^{\otimes p}$ (see \cite[Chapter 1, Example 3.2]{billing}). This implies the following theorem:

\begin{theo}\label{theo:as_convergence}
We have the almost sure convergence $\spa_n \underset{a.s.}{{\longrightarrow}} \spa$ in the space $\mathbb{M}$ of mm-spaces: 
$$\proba[\Phi(\spa_n) \to_{n \to \infty} \Phi(\spa)\text{ for any polynomial }\Phi \in \Pi]=1$$
or equivalently,
$$\proba[d_{\mathrm{GPr}}(\spa_n,\spa) \to_{n \to \infty} 0] =1.$$
\end{theo}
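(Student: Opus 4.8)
The plan is to reduce the statement to the almost sure weak convergence of the empirical measures $\mu_n$, and then to invoke the metrisation of the Gromov-weak topology recalled above. First I would apply Varadarajan's theorem \cite[Theorem 3]{Var58}: since $\mathcal{X}$ is separable, there is a measurable event $\Omega_0$ with $\proba[\Omega_0]=1$ on which the empirical measures $\mu_n=\frac1n\sum_{i=1}^n\delta_{X_i}$ converge weakly to $\mu$. On $\Omega_0$, and for \emph{every} $p\geq 1$, the product measures $\mu_n^{\otimes p}$ then converge weakly to $\mu^{\otimes p}$ on $\mathcal{X}^p$, because weak convergence is stable under finite products (\cite[Chapter 1, Example 3.2]{billing}); the key point is that the single event $\Omega_0$ serves all values of $p$ at once.

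Next I would fix an arbitrary polynomial $\Phi=\Phi^{p,\varphi}\in\Pi$, with $\varphi\in\Ccal_b(\R^{\binom p2})$. The function $(x_1,\dots,x_p)\mapsto\varphi((d(x_i,x_j))_{1\leq i<j\leq p})$ is a bounded continuous function on $\mathcal{X}^p$ that does not depend on $n$ --- here one uses that the support of $\mu_n$ is exactly $\mathcal{X}_n$, so that $d|_{\mathcal{X}_n}$ and $d$ give the same integrand --- whence, on $\Omega_0$,
\begin{align*}
\Phi(\spa_n) &= \int_{\mathcal{X}^p}\varphi\left((d(x_i,x_j))_{1\leq i<j\leq p}\right)\mu_n^{\otimes p}(dx_1\cdots dx_p)\\
&\longrightarrow \int_{\mathcal{X}^p}\varphi\left((d(x_i,x_j))_{1\leq i<j\leq p}\right)\mu^{\otimes p}(dx_1\cdots dx_p)=\Phi(\spa)
\end{align*}
as $n\to\infty$, by the very definition of weak convergence. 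Since this holds on $\Omega_0$ for \emph{every} $\Phi\in\Pi$ simultaneously, the first probability-one statement of the theorem follows.

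Finally, to obtain the equivalent formulation in terms of the Gromov--Prohorov distance, I would note that, deterministically, the convergence $\Phi(\spa_n)\to\Phi(\spa)$ for all $\Phi\in\Pi$ is equivalent to $d_{\mathrm{GPr}}(\spa_n,\spa)\to 0$: this is precisely the equivalence between items (1) and (3) of the characterisation theorem recalled above. One could also run the argument polynomial by polynomial over the countable family $H$ of Remark~\ref{remark:dense}, taking a countable intersection of almost sure events; but as just observed this is unnecessary here, since the single event $\Omega_0$ already handles all polynomials at once.

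I do not expect any substantial obstacle: the proof is merely a packaging of Varadarajan's theorem, the stability of weak convergence under products, and the metrisation of the Gromov-weak topology. The only points requiring genuine care are (i) that the single null set produced by Varadarajan's theorem is enough to conclude \emph{simultaneously} for the uncountable family of all polynomials, which is what the computation above verifies; (ii) the use of the separability of $\mathcal{X}$, without which Varadarajan's theorem may fail; and (iii) the elementary but necessary observation that each $\spa_n$, equipped with the restricted distance and the normalised counting measure on $\mathcal{X}_n$, is a genuine element of $\mathbb{M}$ whose support is $\mathcal{X}_n$.
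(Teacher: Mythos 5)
Your proof is correct and follows essentially the same route as the paper: Varadarajan's theorem gives a single almost sure event on which $\mu_n\rightharpoonup\mu$, hence $\mu_n^{\otimes p}\rightharpoonup\mu^{\otimes p}$ for every $p$, so every polynomial observable converges on that event, and the Gromov--Prohorov reformulation is the equivalence (1)$\Leftrightarrow$(3) of the characterisation theorem. Your remark that the single null set handles all polynomials simultaneously (making the countable family $H$ unnecessary here) is a correct and slightly cleaner packaging of the same argument.
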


\PLM{We can also prove the theorem by using the Gromov--Prohorov distance; indeed, by choosing $\mathcal{Z}=\mathcal{X}$ as the common metric space in which one embeds $\mathcal{X}_n$ and $\mathcal{X}$, and the identity maps for the isometric embeddings, we see that
$$d_{\mathrm{GPr}}(\spa_n,\spa) \leq d_{\mathrm{Pr}}(\mu_n,\mu),$$
and the convergence to $0$ of the right-hand side is the Glivenko--Cantelli convergence of empirical measures. Estimates on the speed of convergence of $\esper[d_{\mathrm{Pr}}(\mu_n,\mu)]$ are given in \cite{Dud69}, but they depend strongly on the space $\spa$: if $k$ denotes the entropic dimension of $\spa=\spaex$, then in general one cannot prove a better bound than 
$\esper[d_{\mathrm{Pr}}(\mu_n,\mu)] = O(n^{-\frac{1}{k+2+\epsilon}})$;
see Theorem 4.1 in \cite{Dud69}. However, if instead of the Gromov--Prohorov distance one uses polynomial observables $\Phi$ in order to control the speed of convergence, then the results of this paper will prove that  essentially there are only two possible speeds of convergence:
\begin{itemize}
 	\item in the generic case, $|\Phi(\spa_n)-\Phi(\spa)| = O(n^{-\frac{1}{2}})$; more precisely, there exists a bilinear map
 	$$\kappa^2 : \Pi^2 \to \Pi$$
 	such that, for any polynomial $\Phi=\Phi^{p,\varphi}$, we have the convergence in law $$\frac{\Phi(\spa_n)-\Phi(\spa)}{n^{1/2}}\rightharpoonup_{n \to \infty} \mathcal{N}(0,\kappa^2(\Phi)(\spa));$$
 	see Theorem \ref{theo:generic_case}.
 	\item in the case of compact homogeneous spaces, $|\Phi(\spa_n)-\Phi(\spa)| = O(n^{-1})$; more precisely, for any polynomial $\Phi=\Phi^{p,\varphi}$, there exists a random variable $Y(\varphi,\spa)$ which is determined by its moments (it has a convergent moment-generating function) such that we have the convergence in law
 	$$\frac{\Phi(\spa_n)-\Phi(\spa)}{n}\rightharpoonup_{n \to \infty} Y(\varphi,\spa);$$
 	see Theorem \ref{theo:singular_case}.
 \end{itemize} 
}\bigskip

\section{The method of cumulants}\label{sec:method_cumulants}
In this section, we recall the notion of (joint) cumulants of random variables and the results from \cite{feray2016mod,feray2017mod}, which relate the existence of a sparse dependency graph for a family of random variables to the size of the cumulants and to the fluctuations of their sum.

\subsection{Joint cumulants}
A \emph{set partition} of $[\![1,n ]\!]$ is a family of non-empty disjoint subsets of $[\![1,n ]\!]$ (the \emph{parts} of the partition), whose union is $[\![1,n ]\!]$. For instance,
\begin{align*}
\{\{1,4,8\},\{3,5,6\},\{2,7\},\{9\}\}
\end{align*}
is a set partition of $[\![1,9]\!]$. We denote $\mathfrak{Q}(n)$ the set of set partitions of $[\![1,n ]\!]$. It is endowed with the \textit{refinement} order: a set partition $\pi$ is \textit{finer} than another set partition $\pi'$ if every part of $\pi$ is included in a part of $\pi'$. Denote $\mu$ the Möbius function of the partially ordered set $(\mathfrak{Q}(n),\preceq)$ (see \cite{Rot64}). One has
\begin{align*}
\mu(\pi) \vcentcolon = \mu(\pi, \{[\![1,n]\!]\}) = (-1)^{\ell(\pi)-1}\,(\ell(\pi)-1)!,
\end{align*}
where $\ell(\pi)$ is the number of parts of $\pi$; see \cite[Chapter 3, Equation (30) p.~128]{Stan97}.\medskip

Given a probability space $(\Omega,\mathcal{F}, \mathbb{P})$ , we set 
$$\mathscr{A} = \bigcap\limits_{p \in \mathbb{N}^*} \mathscr{L}^p(\Omega, \mathcal{F}, \mathbb{P}),$$ 
which has a structure of real algebra. For any integer $r \geq 1$, we define a  map $\kappa_r \colon   \mathscr{A}^r \to \R $ by
\begin{align*}
 \kappa_r(X_1,\dots,X_r) = \left[ t_1 \cdots t_r\right] \, \mathrm{log} \left( \esper\left[\E^{t_1 X_1 + \cdots + t_r X_r} \right] \right) \quad \text{for} \enskip (X_i)_{i \in [\![1,r]\!]} \in \mathscr{A}^r,
\end{align*}
where $\left[t_1 \cdots t_r\right](F)$ is the coefficient of the monomial $\prod_{i=1}^r t_i$ in the series expansion of $F$. Here, $\mathrm{log} (\esper[\E^{t_1 X_1 + \cdots + t_r X_r} ] )$ is considered as a formal power series whose coefficients are polynomials in the joint moments of the $X_i$'s; we do not ask \emph{a priori} for the convergence of the exponential generating function. We call the map $\kappa_r$ the $r$-th joint cumulant map, and we define the \emph{joint cumulant map} 
$$\kappa \colon \bigcup\limits_{r \in \mathbb{N}^*} \mathscr{A}^r \to \mathbb{R}$$
 by $ {\kappa|}_{\mathscr{A}^r} = \kappa_r$ for any integer $r \geq 1$. For a specific sequence $(X_i)_{i \in [\![1,r]\!]} \in \mathscr{A}^r$, we call the quantity $\kappa_r((X_i)_{i \in [\![1,r]\!]} )$ the joint cumulant of $(X_i)_{i \in [\![1,r]\!]} \in \mathscr{A}^r$. This notion of joint cumulant was introduced by Leonov and Shiryaev in \cite{leonov1959method}, and it generalises the usual cumulants: for $X \in \mathscr{A}$,
$$\kappa^{(r)}(X) \vcentcolon= \kappa_r(X,\dots,X)$$ 
is the usual $r$-th cumulant of $X$, that is $r!\,[t^r](\log \esper[\E^{tX}])$. We summarise the properties of the map $\kappa$ in the following:

\begin{prop}\label{prop:cumulants}
\begin{enumerate}
	\item The map $\kappa$ is multilinear.
	\item The joint cumulants and the joint moments are related by the poset of set partitions, and the following formulas hold:
\begin{align*}
&\esper\left[X_1 \cdots X_r\right] = \sum \limits_{\pi \in \mathfrak{Q}(r)} \prod \limits_{C \in \pi} \kappa\left( X_i \, ; \, i \in C\right); \\
&\kappa(X_1,\dots, X_r) = \sum \limits_{\pi \in \mathfrak{Q}(r)} \mu(\pi) \prod\limits_{C \in \pi} \esper\left[\prod\limits_{i \in C} X_i\right].
\end{align*}
\item If the variables $X_1, \dots,X_r$ can be split into two non-empty sets of variables which are independent of each other, then $\kappa(X_1,\dots,X_r)$ vanishes.
\end{enumerate}
\end{prop}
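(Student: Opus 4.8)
The plan is to prove the three items in order, working throughout at the level of the formal power series $L(t_1,\dots,t_r) := \log\esper[\E^{t_1X_1+\cdots+t_rX_r}]$, whose coefficients are polynomials in the joint moments of the $X_i$; these moments are all finite because $\mathscr{A} = \bigcap_p \mathscr{L}^p$, so no convergence issue arises. I first record that $\kappa_r$ is symmetric under permutation of its arguments, since permuting the $X_i$ permutes the $t_i$ and leaves the monomial $t_1\cdots t_r$ fixed; this reduces several statements below to convenient special cases. For item (1), I fix the first argument, write $X_1 = aY+bZ$ with $Y,Z\in\mathscr{A}$ and $a,b\in\R$, and set $F(s_1,s_2,t_2,\dots,t_r) := \log\esper[\E^{s_1Y+s_2Z+t_2X_2+\cdots+t_rX_r}]$, so that $L(t) = F(at_1,bt_1,t_2,\dots,t_r)$. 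Performing the substitutions $s_1\mapsto at_1$, $s_2\mapsto bt_1$ turns the coefficient of $s_1^{j}s_2^{k}t_2\cdots t_r$ into that of $t_1^{j+k}t_2\cdots t_r$ up to a scalar, so only the contributions with $j+k=1$ survive in $[t_1t_2\cdots t_r]L$, and one obtains $a\,[s_1t_2\cdots t_r]F + b\,[s_2t_2\cdots t_r]F = a\,\kappa_r(Y,X_2,\dots,X_r) + b\,\kappa_r(Z,X_2,\dots,X_r)$. Together with the symmetry noted above, this gives linearity in every slot.

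For item (2), I expand $\esper[\E^{t_1X_1+\cdots+t_rX_r}] = \E^{L(t)} = \sum_{k\geq 0}\frac{1}{k!}L(t)^k$ and extract the coefficient of $t_1\cdots t_r$ from both sides. On the left, this coefficient is exactly $\esper[X_1\cdots X_r]$. On the right, extracting $t_1\cdots t_r$ from $\frac{1}{k!}L(t)^k$ amounts to distributing the indices $1,\dots,r$ among the $k$ factors; since $L$ has no constant term, each factor must receive a nonempty block $S_j\subseteq[\![1,r]\!]$ and then contributes $[\prod_{i\in S_j}t_i]\,L = \kappa(X_i\,;\,i\in S_j)$. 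Summing over ordered partitions $(S_1,\dots,S_k)$ of $[\![1,r]\!]$ into $k$ nonempty blocks, the prefactor $1/k!$ is exactly what is needed to pass to unordered set partitions $\pi\in\mathfrak{Q}(r)$ with $\ell(\pi)=k$, giving the first formula $\esper[X_1\cdots X_r] = \sum_{\pi\in\mathfrak{Q}(r)}\prod_{C\in\pi}\kappa(X_i\,;\,i\in C)$. The same computation carried out separately inside each block of an arbitrary $\sigma\in\mathfrak{Q}(r)$ yields $\prod_{C\in\sigma}\esper[\prod_{i\in C}X_i] = \sum_{\pi\preceq\sigma}\prod_{C\in\pi}\kappa(X_i\,;\,i\in C)$, and Möbius inversion on the poset $(\mathfrak{Q}(r),\preceq)$ applied at $\sigma = \{[\![1,r]\!]\}$, together with the stated value of $\mu(\pi) = \mu(\pi,\{[\![1,r]\!]\})$, gives the second formula.

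For item (3), by the symmetry of $\kappa_r$ I may assume the two independent families are $\{X_1,\dots,X_s\}$ and $\{X_{s+1},\dots,X_r\}$ for some $1\leq s\leq r-1$. Independence makes $\esper[\E^{t_1X_1+\cdots+t_rX_r}]$ factor as $\esper[\E^{t_1X_1+\cdots+t_sX_s}]\,\esper[\E^{t_{s+1}X_{s+1}+\cdots+t_rX_r}]$, so $L(t)$ splits as a power series in $(t_1,\dots,t_s)$ alone plus one in $(t_{s+1},\dots,t_r)$ alone; since no monomial of either summand is divisible by all of $t_1,\dots,t_r$, we get $\kappa_r(X_1,\dots,X_r) = [t_1\cdots t_r]\,L(t) = 0$. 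The only point requiring care is the bookkeeping in item (2) — matching ordered set partitions weighted by $1/k!$ with unordered ones, and verifying that each step is a valid identity of formal power series — but this is routine, and I do not anticipate any genuine obstacle.
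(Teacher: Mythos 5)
Your proof is correct. The paper itself gives no proof of this proposition --- it is stated as a summary of standard facts with a reference to Leonov--Shiryaev --- and your generating-function argument (multilinearity via substitution in the log-Laplace series, the moment--cumulant formula via expanding $\E^{L(t)}$ and matching ordered set partitions weighted by $1/k!$, Möbius inversion on $(\mathfrak{Q}(r),\preceq)$ for the converse formula, and factorization of the Laplace transform for the vanishing under independence) is exactly the standard proof those references supply; all the formal-power-series steps you flag as needing care do go through as you describe.
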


For example, the joint cumulants of one or two variables are respectively the expectation and the covariance:
\begin{align*}
\kappa(X_1) = \esper[X_1] \, ; \,\, \kappa(X_1,X_2) = \esper[X_1 X_2] - \esper[X_1]\esper[X_2].
\end{align*}
\PLM{For the convenience of the reader, we also recall the value of the third cumulant: $\kappa(X_1,X_2,X_3) = \esper[X_1X_2X_3] - \esper[X_1]\esper[X_2X_3] - \esper[X_2]\esper[X_1X_3]- \esper[X_3]\esper[X_1X_2] +2\esper[X_1]\esper[X_2]\esper[X_3]$.}
\medskip

\subsection{Dependency graphs and bounds on cumulants}
\PLM{A real random variable $X$ is distributed according to the normal law $\mathcal{N}(m,\sigma^2)$ with mean $m$ and variance $\sigma^2$ if and only if $\kappa^{(1)}(X)=m$, $\kappa^{(2)}(X)=\sigma^2$ and $\kappa^{(r)}(X)=0$ for $r \geq 3$. More generally, a sequence of random variables $(X_n)_{n \in \N}$ converges in distribution towards a normal law $\mathcal{N}(m,\sigma^2)$ if the two first cumulants $\kappa^{(1,2)}(X_n)$ converge toward $m$ and $\sigma^2$ respectively, and if $\lim_{n \to \infty} \kappa^{(r)}(X_n)=0$ for $r \geq 3$; see for instance \cite[Theorem 1]{Jan88}. In the series of papers \cite{feray2016mod,feray2017mod,2017arXiv171206841F,BMN19}, a method of cumulants has been built in order to make more precise this result of asymptotic normality, assuming that one has good upper bounds on the size of the cumulants of the random variables $X_n$. This method falls in the framework of \emph{mod-Gaussian convergence} also constructed in the aforementioned papers. We recall below the main results from this theory; see \cite[Definition 2 and Theorem 3]{2017arXiv171206841F}.
}

\begin{defn}
Let $(S_n)_{n \in \mathbb{N}}$ be a sequence of real-valued random variables. We fix $A \geq 0$, and we consider two positive sequences $(D_n)_{n \in \mathbb{N}}$ and $(N_n)_{n \in \mathbb{N}}$  such that $$\lim_{n \to \infty} \frac{D_n}{N_n} = 0\quad(\text{hypothesis of sparcity}).$$
The hypotheses of the method of cumulants with parameters $((D_n)_{n \in \mathbb{N}}, (N_n)_{n \in \mathbb{N}}, A)$ and with limits $(\sigma^2,L)$ for the sequence $(S_n)_{n \in \mathbb{N}}$ are the two following conditions:
\begin{itemize}
\item For any $r \geq 1$, we have:
\begin{align*}
|\kappa^{(r)}(S_n)| \leq N_n (2D_n)^{r-1} r^{r-2} A^r.
\end{align*}
\item There exist two real numbers $\sigma^2 \geq 0$ and $L$ such that:
\begin{align*}
&\frac{\kappa^{(2)}(S_n)}{N_n D_n} = (\sigma_n)^2 = \sigma^2 \left( 1 + o\left(\left(\frac{D_n}{N_n}\right)^{\!1/3}\right)\right); \\
&\frac{\kappa^{(3)}(S_n)}{N_n (D_n)^2} = L_n = L(1+o(1)) .
\end{align*}
\end{itemize}
\end{defn}
\noindent In particular, the first estimate in the second item states that the variance of $S_n$ is equivalent to $\sigma^2\,N_nD_n$.

\begin{theo}\label{theo:esti_cumu}
Let $(S_n)_{n \in \mathbb{N}}$ be a sequence of real-valued random variables that satisfies the hypotheses of the method of cumulants, with parameters $((D_n)_{n \in \mathbb{N}}, (N_n)_{n \in \mathbb{N}}, A)$ and with limits $(\sigma^2,L)$. Assuming that $\sigma^2 >0$, we set:
\begin{align*}
 Y_n = \frac{S_n - \esper[S_n]}{\sqrt{\var(S_n)}}.
\end{align*}
\begin{enumerate}
\item Central limit theorem with an extended zone of normality: we have $Y_n {\rightharpoonup}_{n \to \infty}\, \mathcal{N}_{\mathbb{R}}(0,1)$, and more precisely,
\begin{align*}
\proba\left[Y_n \geq y_n\right] = \proba\left[\mathcal{N}_{\mathbb{R}}(0,1) \geq y_n\right](1+o(1))
\end{align*}
for any sequence $(y_n)_{n \in \mathbb{N}}$ with $|y_n|  \ll \left(\frac{N_n}{D_n}\right)^{1/6}$.
\item Berry--Esseen type bound: the Kolmogorov distance between $Y_n$ and the standard Gaussian distribution satisfies 
\begin{align*}
d_{\mathrm{Kol}}(Y_n,\mathcal{N}(0,1)) \leq \frac{C\, A^3}{(\sigma_n)^3} \sqrt{\frac{D_n}{N_n}},
\end{align*}
where $C=76.36$ is a universal constant.
\item Moderate deviations: for any sequence $(y_n)_{n \in \mathbb{N}}$ with $1\ll y_n \ll \left(\frac{N_n}{D_n}\right)^{1/4}$,
\begin{align*}
\proba\left[Y_n \geq y_n\right]  = \frac{\E^{-\frac{(y_n)^2}{2}}}{y_n \sqrt{2 \pi}} \exp\left(\frac{L}{6 \sigma^3} \sqrt{\frac{D_n}{N_n}}(y_n)^3\right)(1+o(1)).
\end{align*}
\item Local limit theorem: for any $y \in \mathbb{R}$, any Jordan measurable set $B$ with positive Lebesgue measure $\mathrm{m}(B) > 0$, and any real exponent $\delta$ in $(0,\frac{1}{2})$,
\begin{align*}
\lim\limits_{n \to \infty} \left(\frac{N_n}{D_n}\right)^{\!\delta} \,\proba\!\left[Y_n-y \in \left(\frac{D_n}{N_n}\right)^{\!\delta} B \right] = \frac{1}{\sqrt{2\pi}}\,\E^{-\frac{y^2}{2}} \,\mathrm{m}(B).
\end{align*}
\PLM{\item Concentration inequality: suppose that in addition to the hypotheses of the method of cumulants, we have almost surely $|S_n| \leq N_nA$. Then, for any $x\geq 0$ and any $n \in \N$,
$$\proba[|Y_n|\geq x]\leq 2\exp\left(-\frac{(\sigma_n)^2x^2}{9A^2}\right). $$}
\end{enumerate}
\end{theo}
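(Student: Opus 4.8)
\medskip

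\noindent\textbf{Proof proposal.} The plan is to deduce the whole theorem from the mod-Gaussian convergence machinery of \cite{feray2016mod, feray2017mod, 2017arXiv171206841F, BMN19}, in three stages: (a) turn the bound on the cumulants into holomorphic control of the Laplace transform of $S_n$; (b) read off from this the mod-Gaussian convergence of a suitable rescaling of the centred variable $\overline{S}_n := S_n - \esper[S_n]$; and (c) quote, item by item, the general limit theorems attached to a mod-Gaussian sequence. For stage (a), I would introduce the formal series $\psi_n(z) = \log \esper[\E^{z\overline{S}_n}] = \sum_{r \geq 2}\kappa^{(r)}(S_n)\,z^r/r!$ and plug in the hypothesis $|\kappa^{(r)}(S_n)| \leq N_n(2D_n)^{r-1}r^{r-2}A^r$ together with Stirling's estimate $r^{r-2}/r! = O(\E^r/r^2)$. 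This shows that the series converges absolutely and defines a holomorphic function on a disc of radius of order $(AD_n)^{-1}$, on a slightly smaller disc of which one gets the two bounds
\[
|\psi_n(z)| \leq c_1\,N_n D_n A^2\,|z|^2, \qquad \Bigl|\psi_n(z) - \tfrac12\kappa^{(2)}(S_n)\,z^2\Bigr| \leq c_2\,N_n D_n^2 A^3\,|z|^3
\]
for universal constants $c_1,c_2$. Taking $z = \I u$ gives the same control on the characteristic function of $S_n$ on an interval of length of order $(AD_n)^{-1}$.

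For stage (b), since $\var(S_n) = \kappa^{(2)}(S_n) = N_n D_n (\sigma_n)^2$ with $(\sigma_n)^2 \to \sigma^2 > 0$, substituting $z = w/(N_n D_n^2)^{1/3}$ into $\psi_n$ and using the prescribed limits for $\kappa^{(2)}(S_n)$ and $\kappa^{(3)}(S_n)$ yields
\[
\log\esper\Bigl[\E^{\,w\,\overline{S}_n/(N_n D_n^2)^{1/3}}\Bigr] = \frac{\sigma^2\,t_n}{2}\,w^2 + \frac{L}{6}\,w^3\,(1+o(1)) + o(1), \qquad t_n := \Bigl(\tfrac{N_n}{D_n}\Bigr)^{1/3},
\]
uniformly for $w$ in a window of width of order $t_n$; equivalently, $\overline{S}_n\,(N_n D_n^2)^{-1/3}$ is mod-Gaussian convergent at scale $\sigma^2 t_n$ with limiting function $w \mapsto \E^{Lw^3/6}$. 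This is exactly the implication ``hypotheses of the method of cumulants $\Rightarrow$ mod-Gaussian convergence'' of \cite{feray2016mod, 2017arXiv171206841F}, and it is the conceptual core of the argument; rewritten at the scale of $Y_n$ it reads $\log\esper[\E^{\xi Y_n}] = \tfrac{\xi^2}{2} + \tfrac{L}{6\sigma^3}\sqrt{D_n/N_n}\,\xi^3(1+o(1)) + o(1)$, which already makes transparent both the normality zone $|y_n| \ll (N_n/D_n)^{1/6}$ of item (1) and the exponential correction of item (3).

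For stage (c), each conclusion is then a direct application of the corresponding theorem of the mod-Gaussian toolbox. Item (1) is the normality-zone theorem. Item (3) is the precise moderate deviations theorem, proved by a Cramér-type change of measure exploiting the Laplace transform of stage (a). Item (4) is the local limit theorem, obtained by Fourier inversion, combining the mod-Gaussian behaviour near $u=0$ with the sub-Gaussian decay of $|\esper[\E^{\I u Y_n}]|$ for $|u|$ up to order $\sqrt{N_n/D_n}/A$ coming from stage (a). Item (2) is the Berry--Esseen bound, obtained from Esseen's smoothing inequality applied to $|\esper[\E^{\I u Y_n}] - \E^{-u^2/2}| \leq c\,(A^3/(\sigma_n)^3)\sqrt{D_n/N_n}\,|u|^3\,\E^{-u^2/4}$ on the relevant interval, the explicit value $C = 76.36$ resulting from carrying the constants through the smoothing inequality. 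Finally, item (5) uses the extra a.s.\ bound $|S_n| \leq N_n A$ to upgrade the local Laplace estimate of stage (a) into a global sub-Gaussian bound $\esper[\E^{z\overline{S}_n}] \leq \exp(c\,(\sigma_n)^2 N_n D_n A^2 z^2)$ valid for all $z \in \R$ (the boundedness taking over for large $|z|$), after which a Chernoff bound gives the stated inequality.

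I expect the main obstacle to lie not in the overall structure, which becomes essentially mechanical once stage (b) is available, but in the quantitative refinements inside stage (c): pinning down the universal constant $76.36$ and establishing the precise (rather than merely logarithmic-scale) moderate-deviation asymptotics both require a genuine saddle-point/tilting analysis of the Laplace transform, finer than the crude bounds of stage (a). Since all these refinements are already proved in \cite{feray2016mod, feray2017mod, 2017arXiv171206841F, BMN19}, the work here reduces to checking that the normalizations $(D_n, N_n, A)$ and the limits $(\sigma^2, L)$ of our Definition match the hypotheses in those references.
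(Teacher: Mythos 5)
Your proposal is correct and takes essentially the same approach as the paper: the paper gives no proof of this theorem beyond citing Theorem 9.5.1 of \cite{feray2016mod}, Corollary 30 of \cite{feray2017mod}, Proposition 4.9 of \cite{BMN19} and Proposition 6 of \cite{2017arXiv171206841F}, and your three-stage sketch accurately reconstructs the mod-Gaussian strategy of those references before, like the paper, deferring to them for the quantitative refinements.
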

\noindent This list of results corresponds to Theorem 9.5.1 in \cite{feray2016mod} (CLT and moderate deviations), Corollary 30 in \cite{feray2017mod} (Kolmogorov distance), Proposition 4.9 in \cite{BMN19} (local limit theorem), and Proposition 6 in \cite{2017arXiv171206841F} (concentration inequality). \bigskip

We shall use the method of dependency graphs in order to verify the hypothesis of the previous theorem.
Let $S = \sum_{v \in V} A_v$ be a finite sum or real-valued random variables. We say that a graph $G=(V,E)$ is a \emph{dependency graph} for the family of random variables $(A_v)_{v \in V}$ if, given two disjoint subsets $V_1,V_2 \subseteq V$, if there is no edge $e = (v,w) \in E$ such that $v \in V_1$ and $w \in V_2$, then the two vectors $(A_v)_{v \in V_1}$ and $(A_w)_{w \in V_2}$ are independent.

\begin{theo}\label{theo:bounds_dependency}
Let $S = \sum_{v \in V} A_v$ be a sum of random variables such that $(A_v)_{v \in V}$ admits a dependency graph $G=(V,E)$, with $$N = \mathrm{card}(V)\qquad;\qquad D = 1 + \max_{v \in V}(\mathrm{deg}(v)).$$
We also assume that $|A_v| \leq A$ almost surely for any $v$ in $V$. Then, for any $r \geq 1$,
\begin{align}\label{bound:cumulant}
|\kappa^{(r)}(S)| \leq N (2D)^{r-1} r^{r-2} A^r.
\end{align}
\end{theo}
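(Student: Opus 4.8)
The plan is to bound $\kappa^{(r)}(S)$ by expanding it into joint cumulants of the individual summands, discarding the ones that vanish by independence, and reducing what remains to a count of labelled trees. First I would use the multilinearity of the joint cumulant map (Proposition~\ref{prop:cumulants}); since $S=\sum_{v\in V}A_v$, this gives
\[
\kappa^{(r)}(S)=\sum_{\mathbf v=(v_1,\dots,v_r)\in V^r}\kappa\bigl(A_{v_1},\dots,A_{v_r}\bigr).
\]
To a tuple $\mathbf v$ I attach the graph $H(\mathbf v)$ on the set of positions $\{1,\dots,r\}$ having an edge $\{a,b\}$ whenever $v_a=v_b$ or $\{v_a,v_b\}\in E$. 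If $\{1,\dots,r\}$ splits as $I_1\sqcup I_2$ with no edge of $H(\mathbf v)$ joining $I_1$ and $I_2$, then $\{v_a:a\in I_1\}$ and $\{v_b:b\in I_2\}$ are disjoint subsets of $V$ joined by no edge of $G$, so $(A_{v_a})_{a\in I_1}$ and $(A_{v_b})_{b\in I_2}$ are independent and Proposition~\ref{prop:cumulants} makes $\kappa(A_{v_1},\dots,A_{v_r})$ vanish. Hence only the tuples $\mathbf v$ with $H(\mathbf v)$ connected contribute.

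Next I would estimate one surviving joint cumulant. Starting from the moment--cumulant inversion formula of Proposition~\ref{prop:cumulants} and from $\bigl|\esper\bigl[\prod_{i\in C}A_{v_i}\bigr]\bigr|\le A^{|C|}$, every monomial occurring in $\kappa(A_{v_1},\dots,A_{v_r})$ is at most $A^r$ in absolute value; the cancellations forced by the independence of sub-collections of the $A_{v_i}$ then collapse the remaining combinatorial weight to one controlled by the \emph{spanning trees} of $H(\mathbf v)$, giving a bound of the form
\[
\bigl|\kappa\bigl(A_{v_1},\dots,A_{v_r}\bigr)\bigr|\le 2^{\,r-1}A^r\,\tau\bigl(H(\mathbf v)\bigr),
\]
where $\tau(H(\mathbf v))$ is the number of spanning trees of $H(\mathbf v)$, so that the bound is $0$ when $H(\mathbf v)$ is disconnected (reproving the previous step), and the constant $2^{r-1}$ is arranged so that the final answer comes out with $(2D)^{r-1}$. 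This is Janson's cumulant estimate for sums with a sparse dependency graph \cite{Jan88,feray2016mod}, and I expect it to be the crux of the argument: bounding each cumulant by the naive weighted partition sum $\bigl(\sum_{\pi\in\mathfrak{Q}(r)}|\mu(\pi)|\bigr)A^r=\bigl(\sum_{\pi}(\ell(\pi)-1)!\bigr)A^r$ loses a factor of order $(r-1)!$ and is not sufficient, so the Möbius sum over set partitions and the dependency structure have to be processed together in order to end with spanning \emph{trees} rather than arbitrary connected spanning subgraphs (of which there are far too many).

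The last step is the count. Writing $\tau(H(\mathbf v))=\#\{T:\ E(T)\subseteq E(H(\mathbf v))\}$, the sum ranging over trees $T$ on $\{1,\dots,r\}$, and exchanging summations,
\[
\bigl|\kappa^{(r)}(S)\bigr|\le 2^{\,r-1}A^r\sum_{\mathbf v\in V^r}\tau\bigl(H(\mathbf v)\bigr)=2^{\,r-1}A^r\sum_{T}\#\bigl\{\mathbf v\in V^r:\ E(T)\subseteq E(H(\mathbf v))\bigr\}.
\]
By Cayley's formula there are $r^{r-2}$ trees on $\{1,\dots,r\}$. Fixing one and traversing it from a root, the root value is any of the $N$ elements of $V$, and once the value $v_a$ at a vertex $a$ has been chosen, the value at a child $b$ must lie in $\{v_a\}\cup\{w:\{v_a,w\}\in E\}$, a set of size at most $1+\deg(v_a)\le D$; hence each tree admits at most $ND^{r-1}$ compatible tuples and $\sum_{\mathbf v}\tau(H(\mathbf v))\le r^{r-2}ND^{r-1}$. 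Altogether
\[
\bigl|\kappa^{(r)}(S)\bigr|\le 2^{\,r-1}A^r\cdot r^{r-2}ND^{r-1}=N(2D)^{r-1}r^{r-2}A^r,
\]
which is the desired inequality. Apart from the per-cumulant bound of the second paragraph, the multilinear expansion, the vanishing on disconnected position-graphs and this tree--degree count are all routine.
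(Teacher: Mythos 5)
Your proposal is correct and follows essentially the same route as the paper, which itself defers the proof to \cite[Theorem 9.1.7]{feray2016mod} and later recalls exactly the two ingredients you use: the per-term bound $|\kappa(A_{v_1},\dots,A_{v_r})|\le 2^{r-1}A^r\,\mathrm{ST}(H(\mathbf v))$ by spanning trees (Equation (9.9) there), and the count of at most $ND^{r-1}$ tuples compatible with each of the $r^{r-2}$ Cayley trees. Like the paper, you cite rather than derive the spanning-tree cumulant estimate, which is the only non-routine step; everything else in your argument is accurate.
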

\PLM{We refer to \cite[Theorem 9.1.7]{feray2016mod} for a proof of this result; later, we shall recall some of its arguments and adapt them in order to obtain adequate bounds on the cumulants of polynomials of the Gromov--Prohorov sample model of a compact homogeneous space.}\bigskip

\section{Generic fluctuations of the sample model}\label{sec:generic_fluctuations}
Throughout this section, $\spa = \spaex \in \mathbb{M}$ is a fixed metric measure space and $\Phi^{p,\varphi} \in \Pi$ a fixed polynomial. As in Section \ref{sub:almost_sure_convergence}, we denote $\spa_n$ the sample model of $\spa$ with $n$ independent points $X_1,\ldots,X_n$, and we are going to study the convergence of $\Phi (\spa_n)$ toward $\Phi(\spa)$. 

\subsection{Dependency graphs for the sample model}
\noindent For any sequence $X \colon \mathbb{N} \to E$ with values in a set $E$ and for any map $f \colon S \to \mathbb{N}$, we denote by $X_f$ the map $X \circ f$.
For example, if we take $f = I \in [\![1,n]\!]^5$ which is a 5-tuple, we have $X_I = (X_{I_1},X_{I_2},X_{I_3},X_{I_4},X_{I_5})$.
For any finite or infinite sequence $I \colon S \to T$, we write
\begin{align*}
d(X_I) = (d(X_{I_i},X_{I_j}))_{i \in S, j \in S}
\end{align*}
We see a $p$-tuple $\bar{\imath}$ as a map $\bar{\imath} \colon [\![1,p ]\!] \to [\![1,n ]\!]$ and we denote by $\overline{\mathrm{Im}}(\ibar)$ the multiset-image of this map, taking as a multiplicity function the map $m \colon \mathrm{Im}(\ibar) \to \mathbb{N}$ defined for any $\overline{\mathrm{Im}}(\ibar)$ by $m(x) = \mathrm{Card}((\ibar)^{-1}(x))$.
We have 
\begin{align*}
\forall n \geq 1, \, \Phi(\spa_n) = \frac{1}{n^p} \,\sum_{\ibar \in [\![1,n ]\!]^p } \varphi(d(X_{\ibar})).
\end{align*}

\noindent We write  $S_{n}(\varphi,\spa) = n^p\,\Phi(\spa_n) = \sum_{\ibar \in [\![1,n ]\!]^p } \varphi(d(X_{\ibar}))$, which is a sum of dependent random variables. We are going to use the method of cumulants in order to study the asymptotic probabilistic behavior of $S_{n}(\varphi,\spa)$. Placing ourselves in the framework of the previous section, we take $V = [\![1,n ]\!]^p$, $S = S_{n}(\varphi,\spa) = \sum_{\ibar \in [\![1,n ]\!]^p } \varphi(d(X_{\ibar})) $, $A = \|\varphi\|_{\infty}$, and two vertices $\ibar$ and $\jbar$ will be adjacent in the graph $G = (V,E)$ if and only if they have at least one index in common, \emph{i.e.} if and only if 
$$\mathrm{Card}\left(\overline{\mathrm{Im}}(\ibar)\cap \overline{\mathrm{Im}}(\jbar)\right) \geq 1 .$$
\PLM{\begin{lemma}
The condition written above defines a dependency graph for the family of random variables $(\varphi(d(X_{\ibar})))_{\ibar \in V}$.
\end{lemma}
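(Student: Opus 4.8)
The plan is to observe that each summand $\varphi(d(X_{\ibar}))$ is a deterministic function of only the finitely many points indexed by $\ibar$, so that the dependency-graph property reduces to the trivial fact that disjoint sub-families of the i.i.d.\ sequence $(X_n)_{n \in \N}$ are independent. First I would record the measurability step: for a fixed $p$-tuple $\ibar \colon [\![1,p]\!] \to [\![1,n]\!]$, the matrix $d(X_{\ibar}) = (d(X_{i_a},X_{i_b}))_{1\leq a,b\leq p}$ is a measurable function of the vector $(X_i)_{i \in \mathrm{Im}(\ibar)}$, and composing with the continuous map $\varphi$ shows that $\varphi(d(X_{\ibar}))$ is $\sigma\big((X_i)_{i \in \mathrm{Im}(\ibar)}\big)$-measurable.

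Next I would unwind the definition of the edge set. Let $V_1, V_2 \subseteq V = [\![1,n]\!]^p$ be disjoint with no edge of $G$ joining a vertex of $V_1$ to a vertex of $V_2$. By the chosen adjacency rule this means that for every $\ibar \in V_1$ and every $\jbar \in V_2$ one has $\mathrm{Card}\big(\overline{\mathrm{Im}}(\ibar) \cap \overline{\mathrm{Im}}(\jbar)\big) = 0$, hence $\mathrm{Im}(\ibar) \cap \mathrm{Im}(\jbar) = \emptyset$. Putting $J_1 = \bigcup_{\ibar \in V_1} \mathrm{Im}(\ibar)$ and $J_2 = \bigcup_{\jbar \in V_2} \mathrm{Im}(\jbar)$, this gives $J_1 \cap J_2 = \emptyset$, and since the $X_n$ are mutually independent the $\sigma$-algebras $\sigma\big((X_i)_{i \in J_1}\big)$ and $\sigma\big((X_i)_{i \in J_2}\big)$ are independent.

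Finally, combining the two steps: the whole vector $(\varphi(d(X_{\ibar})))_{\ibar \in V_1}$ is $\sigma\big((X_i)_{i \in J_1}\big)$-measurable and $(\varphi(d(X_{\jbar})))_{\jbar \in V_2}$ is $\sigma\big((X_i)_{i \in J_2}\big)$-measurable, so these two random vectors are independent, which is exactly the defining property of a dependency graph. The only point requiring a little care is to track entire vectors rather than individual coordinates when invoking independence, but this is immediate once the relevant $\sigma$-algebras are identified; I do not expect any genuine obstacle here, the statement being essentially a bookkeeping consequence of independence of the sample.
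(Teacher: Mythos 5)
Your proof is correct and follows essentially the same route as the paper: unconnected sets of $p$-tuples have disjoint unions of index-images, so the two vectors of observables are measurable with respect to independent sub-$\sigma$-algebras generated by disjoint sub-families of the i.i.d.\ sequence. The only difference is that you spell out the measurability bookkeeping slightly more explicitly than the paper does.
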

\begin{proof}
Suppose that $\{\ibar^{1},\ldots,\ibar^{r}\}$ and $\{\jbar^{1},\ldots,\jbar^{s}\}$ are two sets of $p$-tuples which are not connected. Then, there is no index $i$ belonging to an intersection $\overline{\mathrm{Im}}(\ibar^{a})\cap \overline{\mathrm{Im}}(\jbar^{b})$, so the two sets of variables
$$\left\{X_i,\,\,i \in \bigcup_{a=1}^r\overline{\mathrm{Im}}(\ibar^{a}) \right\} \quad\text{and}\quad \left\{X_j,\,\,j \in \bigcup_{b=1}^s\overline{\mathrm{Im}}(\jbar^{b}) \right\}$$
are disjoint. As the two vectors $(\phi(d(X_{\ibar^{a}})))_{1\leq a \leq r}$ and $(\phi(d(X_{\jbar^{b}})))_{1\leq b \leq s}$ are measurable functions of these two sets, they are independent.
\end{proof}}

In the dependency graph $G$ constructed above, we have $N = n^p$ and $D \leq p^2 n^{p-1}$. Indeed, we can build a surjective map from $[\![1,p ]\!]^2 \times[\![1,n ]\!]^{p-1}$  to the set of adjacent vertices of a vertex $\bar{\imath} \in V $ taking 
\begin{align*}
[\![1,p ]\!]^2 \times [\![1,n ]\!]^{p-1} &\to \{\text{adjacent vertices of }\ibar \} \\
(i,j,(y_k)_{k \neq j}) &\mapsto (y'_k)_{k \in [\![1,p ]\!]}
\end{align*}
with $y'_k = y_k$ if $k\neq j$ and $y'_j = x_i$. 
Therefore, we have from Theorem~\ref{theo:bounds_dependency}:
\begin{align*}
\forall r \geq 1, |\kappa^{(r)}(S_n(\varphi,\spa))| \leq n^p (2p^2 n^{p-1})^{r-1} r^{r-2} (\|\varphi\|_\infty)^r.
\end{align*}
which is an upper bound of order $n^{(p-1)r + 1}$.\medskip


\subsection{Polynomiality of the cumulants}
For any $r \geq 1$, we can write by multilinearity of the cumulant:
\begin{align*}
\kappa^{(r)}(S_n(\varphi,\spa)) = \sum_{(\ibar^1,\dots,\ibar^r) \in V^r} \kappa\left(\varphi(d(X_{\ibar^1})),\dots,\varphi(d(X_{\ibar^r}))\right).
\end{align*}
For any $I = (\ibar^1,\dots,\ibar^r) \in V^r$, we set $\varphi(d(X_I)) = (\varphi(d(X_{\ibar^1})),\dots,\varphi(d(X_{\ibar^r})))$, hence:
\begin{align*}
\kappa^{(r)}(S_n(\varphi,\spa)) = \sum_{I \in V^r} \kappa (\varphi(d(X_I))).
\end{align*}
We identify here $V^r = \left([\![1,n ]\!]^p \right)^r $ with the set $[\![1,n ]\!]^{pr}$ by preserving the \textit{lexicographic} order: \emph{i.e.} by using the bijection
\begin{align*}
b \colon &[\![1,r ]\!] \times \![1,n ]\!] \to [\![1,rp]\!]
\\
&(k,l) \mapsto (k-1)p + l.
\end{align*}
\begin{prop}\label{prop:polynomiality}
For any integer $r \geq 1$, the map
\begin{align*}
\mathbb{N}^* &\to \mathbb{R} \\
n &\mapsto \kappa^{(r)}(S_n(\varphi,\spa))
\end{align*}
is a polynomial in $\mathbb{R}[n]$ with degree not exceeding $(p-1)r+1$.
\end{prop}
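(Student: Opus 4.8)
The strategy is to expand each joint cumulant $\kappa(\varphi(d(X_I)))$ appearing in
$$\kappa^{(r)}(S_n(\varphi,\spa)) = \sum_{I \in V^r} \kappa(\varphi(d(X_I)))$$
and to regroup the terms $I = (\ibar^1,\dots,\ibar^r) \in V^r = [\![1,n]\!]^{pr}$ according to the \emph{pattern} of coincidences among their coordinates. Formally, to each $I \in [\![1,n]\!]^{pr}$ one associates the set partition $\pi(I) \in \mathfrak{Q}(pr)$ whose parts are the level sets of the map $a \mapsto I_a$; write $V^r_\pi$ for the set of $I$ with $\pi(I) = \pi$. The key observations are: first, the quantity $\kappa(\varphi(d(X_I)))$ depends only on $\pi(I)$, because the joint law of $(\varphi(d(X_{\ibar^1})),\dots,\varphi(d(X_{\ibar^r})))$ is unchanged if one relabels the distinct values taken by the coordinates of $I$ (the $X_k$ being i.i.d.); call this common value $\kappa_\pi(\varphi,\spa)$. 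Second, the number $\#V^r_\pi$ of tuples realising a fixed pattern $\pi$ with $\ell(\pi)$ parts is exactly the falling factorial $n(n-1)\cdots(n-\ell(\pi)+1)$, which is a polynomial in $n$ of degree $\ell(\pi)$. Hence
$$\kappa^{(r)}(S_n(\varphi,\spa)) = \sum_{\pi \in \mathfrak{Q}(pr)} \kappa_\pi(\varphi,\spa)\, \big(n)_{\ell(\pi)},$$
where $(n)_k = n(n-1)\cdots(n-k+1)$, which is manifestly a polynomial in $\R[n]$.

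It remains to bound the degree by $(p-1)r + 1$, i.e. to show that $\kappa_\pi(\varphi,\spa) = 0$ whenever $\ell(\pi) > (p-1)r+1$. This is where property (3) of Proposition~\ref{prop:cumulants} comes in: the joint cumulant of a family of random variables vanishes as soon as the family can be split into two mutually independent nonempty subfamilies. Think of $\pi$ as encoding which of the $r$ blocks $\ibar^1,\dots,\ibar^r$ (each a set of $p$ coordinates) share common indices; two blocks $\ibar^a$ and $\ibar^b$ give independent variables $\varphi(d(X_{\ibar^a}))$ and $\varphi(d(X_{\ibar^b}))$ precisely when no part of $\pi$ meets both. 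Consider the graph on $[\![1,r]\!]$ with an edge between $a$ and $b$ iff some part of $\pi$ meets both blocks; if this graph is disconnected, $\kappa_\pi = 0$. So we only need to control $\ell(\pi)$ when this graph \emph{is} connected: then a spanning tree has $r-1$ edges, each edge "costing" at least one merge of coordinates, so the $pr$ coordinates are organised into at most $pr - (r-1) = (p-1)r + 1$ distinct values, i.e. $\ell(\pi) \le (p-1)r + 1$.

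The main obstacle is making the last counting argument clean: one must argue that a connected "block-interaction graph" forces at least $r-1$ non-trivial identifications among the $pr$ coordinates, so that $\ell(\pi) \le pr - (r-1)$. I would do this by induction on $r$, or directly: pick a spanning tree of the block graph; for each of its $r-1$ edges $\{a,b\}$ choose a part of $\pi$ containing a coordinate from block $a$ and a coordinate from block $b$, and observe that each such part merges coordinates lying in (at least) two different blocks, which when summed over the tree reduces the maximal possible number of parts from $pr$ by at least $r-1$. A careful bookkeeping (e.g. contracting the tree edge by edge and tracking the number of blocks versus the number of parts touching them) gives the bound. Everything else — polynomiality itself, and the vanishing via disconnection — is immediate from the structure above and from Proposition~\ref{prop:cumulants}(3).
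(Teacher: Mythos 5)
Your proof is correct. The first half (grouping the tuples $I\in[\![1,n]\!]^{pr}$ by the set partition recording coincidences of coordinates, observing that the joint cumulant only depends on that partition because the $X_i$ are i.i.d., and counting $n^{\downarrow\ell(\pi)}$ tuples per partition) is exactly the paper's argument for polynomiality. Where you diverge is the degree bound. The paper does \emph{not} prove term-by-term vanishing inside Proposition~\ref{prop:polynomiality}: it simply invokes the a priori estimate $|\kappa^{(r)}(S_n(\varphi,\spa))|\leq n^p(2p^2n^{p-1})^{r-1}r^{r-2}\|\varphi\|_\infty^r = O(n^{(p-1)r+1})$ coming from the dependency-graph bound of Theorem~\ref{theo:bounds_dependency}, and notes that a polynomial which is $O(n^{(p-1)r+1})$ has degree at most $(p-1)r+1$. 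Your route instead shows directly that $\kappa(\pi,\varphi)=0$ whenever $\ell(\pi)>(p-1)r+1$, via the block-interaction graph on $[\![1,r]\!]$ and Proposition~\ref{prop:cumulants}(3); this is precisely the content and the method of the paper's \emph{next} statement, Proposition~\ref{prop:vanish generic} (there phrased via spanning trees of the parts and the contracted multigraph $H_\pi$, which has $pr-\ell(\pi)\leq r-2$ edges and is therefore disconnected). So you have in effect proved Propositions~\ref{prop:polynomiality} and~\ref{prop:vanish generic} in one go. What your approach buys is self-containment (no appeal to the general cumulant bound for dependency graphs) and the stronger conclusion that the offending coefficients vanish individually rather than merely cancelling in aggregate; what it costs is the counting lemma you flag yourself, namely that a part of $\pi$ selected for $m$ distinct edges of the spanning tree meets at least $m+1$ blocks and hence contributes at least $m$ to $\sum_{A\in\pi}(|A|-1)=pr-\ell(\pi)$ — this does close cleanly, and is equivalent to the paper's edge count for $H_\pi$, but it must be said explicitly for the argument to be complete.
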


\begin{proof}
 For $x= (x_1,\dots,x_{pr})$ in $[\![1,n ]\!]^{pr}$, we consider the equivalence relation $\pi_x$ over $[\![1,pr]\!]$ defined by $i \sim j$ if and only if $x_i = x_j$. We then denote $\mathrm{Sp}_n(x)$ the set-partition in $\mathfrak{Q}(pr)$ associated to the equivalence relation $\pi_x$. 
Given two families of indices $I=(x_1,\ldots,x_{pr})$ and $J=(y_1,\ldots,y_{pr})$ in $[\![1,n ]\!]^{pr}$, note that if $\mathrm{Sp}_n(I) = \mathrm{Sp}_n(J)$, then $\kappa (\varphi(d(X_I))) =\kappa (\varphi(d(X_J)))$. Indeed, if $\mathrm{Sp}_n(I) = \mathrm{Sp}_n(J)$, then one can find a bijection $\psi : [\![1,n]\!] \to [\![1,n]\!]$ such that $\psi(x_a)=y_a$ for any $a \in [\![1,pr]\!]$; the result follows since the $X_{i}$'s all have the same law. Given $\pi \in \mathfrak{Q}(pr)$, we denote:
\begin{equation}
	\kappa(\pi,\varphi) = \kappa (\varphi(d(X_I))) 
	\quad \text{for any $I \subset [\![1,n ]\!]^{pr}$ such that $\mathrm{Sp}_n(I) = \pi$}. \label{eq:kappa_pi}
\end{equation} 
Then,
\begin{align*}
\kappa^{(r)}(S_n(\varphi,\spa)) &= \sum_{I \in V^r} \,\kappa (\varphi(d(X_I)))
\\
&= \sum_{\pi \in \mathfrak{Q}(pr)} \mathrm{Card}(\pi,n) \,\kappa(\pi,\varphi).
\end{align*}
where $\mathrm{Card}(\pi,n)$ denotes the number of families $I \in [\![1,n]\!]^{pr}$ such that $\mathrm{Sp}_n(I) = \pi$. We now remark that given 
$\pi \in \mathfrak{Q}(pr) $,
$\mathrm{Sp}_{n}^{-1}(\pi)$ is in bijection with the set 
\begin{align*}
\{(x_1,\dots,x_{\ell(\pi)}) \, ; \,  x_i \in [\![1,n]\!] \text{ and for all } i \neq j \in [\![1,{\ell(\pi)} ]\!],  x_i \neq x_j \}
. 
\end{align*}
The cardinal of this set is $n^{\downarrow {\ell(\pi)}} = n(n-1)\cdots (n-({\ell(\pi)}-1))$ (this is valid even if $n < \ell(\pi)$). Thus, for any $n \geq 1$,
\begin{equation}
\kappa^{(r)}(S_n(\varphi,\spa)) = \sum_{\pi \in \mathfrak{Q}(pr)}\kappa(\pi,\varphi) \,n^{\downarrow \ell(\pi)}. \label{eq:expansion_cumulant}
\end{equation}
This proves the polynomiality, and since we know that the left-hand side is a $O(n^{(p-1)r+1})$, the degree of the polynomial is smaller than $(p-1)r+1$.
\end{proof}

In Equation \eqref{eq:expansion_cumulant}, we know that the terms with degree strictly larger than $(p-1)r+1$ cancel one another. Let us give a simpler explanation of this vanishing:
\begin{prop}\label{prop:vanish generic}
If $r\geq 2$ and $\ell(\pi) > (p-1)r + 1$, then $\kappa(\pi,\varphi) = 0$.
\end{prop}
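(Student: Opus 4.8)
The plan is to exploit property (3) of Proposition~\ref{prop:cumulants}: a joint cumulant $\kappa(Y_1,\dots,Y_r)$ vanishes as soon as the family $\{Y_1,\dots,Y_r\}$ splits into two subfamilies that are independent of one another. So the strategy is to show that whenever $\ell(\pi) > (p-1)r+1$, the $r$ blocks of coordinates defining the tuples $\ibar^1,\dots,\ibar^r$ cannot all be ``chained together'' through shared indices, forcing a partition of $\{1,\dots,r\}$ into two groups of tuples using disjoint sets of variables $X_i$; then $\kappa(\varphi(d(X_{\ibar^1})),\dots,\varphi(d(X_{\ibar^r})))=\kappa(\pi,\varphi)=0$ by the dependency-graph lemma together with Proposition~\ref{prop:cumulants}(3).

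First I would set up the combinatorial bookkeeping. Fix $\pi \in \mathfrak{Q}(pr)$ and pick any representative $I=(\ibar^1,\dots,\ibar^r)\in V^r$ with $\mathrm{Sp}_n(I)=\pi$, viewed via the lexicographic bijection $b$ as a word of length $pr$ in $[\![1,n]\!]$; the value $\ell(\pi)$ is exactly the number of distinct indices appearing, i.e.\ $\mathrm{Card}\big(\bigcup_{k=1}^r \overline{\mathrm{Im}}(\ibar^k)\big)$. Now build an auxiliary graph $H$ on vertex set $[\![1,r]\!]$ (one vertex per tuple $\ibar^k$), with an edge between $k$ and $\ell$ whenever $\overline{\mathrm{Im}}(\ibar^k)\cap\overline{\mathrm{Im}}(\ibar^\ell)\neq\varnothing$. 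If $H$ is disconnected, say with components inducing a nontrivial partition $[\![1,r]\!]=C_1\sqcup C_2$, then $\bigcup_{k\in C_1}\overline{\mathrm{Im}}(\ibar^k)$ and $\bigcup_{k\in C_2}\overline{\mathrm{Im}}(\ibar^k)$ are disjoint, hence the two vectors of random variables indexed by $C_1$ and by $C_2$ are independent (same argument as in the dependency-graph lemma), and $\kappa(\pi,\varphi)=0$ by Proposition~\ref{prop:cumulants}(3). So it suffices to show that $\ell(\pi) > (p-1)r+1$ forces $H$ to be disconnected.

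The key counting step is a bound in the opposite direction: if $H$ is \emph{connected}, then $\ell(\pi)\leq (p-1)r+1$. This is where the real content lies, and I expect it to be the main (though mild) obstacle. The argument is: each tuple $\ibar^k$ contributes at most $p$ distinct indices, so the first tuple introduces at most $p$ new indices; thereafter, order the tuples along a spanning tree of $H$, so that each subsequent tuple $\ibar^k$ shares at least one index with some previously processed tuple, hence introduces at most $p-1$ genuinely new indices. Summing, $\ell(\pi)\leq p + (r-1)(p-1) = (p-1)r + 1$. Contrapositively, $\ell(\pi) > (p-1)r+1$ implies $H$ disconnected, which by the previous paragraph gives $\kappa(\pi,\varphi)=0$. (The hypothesis $r\geq 2$ is exactly what makes a nontrivial split possible; for $r=1$ one has $\ell(\pi)\leq p$ and there is nothing to split.) I would close by remarking that this also re-derives, term by term, the vanishing of the high-degree coefficients in Equation~\eqref{eq:expansion_cumulant}.
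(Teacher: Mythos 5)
Your proof is correct and follows essentially the same route as the paper: both reduce the vanishing of $\kappa(\pi,\varphi)$ to the disconnectedness of a graph on the $r$ tuples, combined with the third property of Proposition~\ref{prop:cumulants}. The only cosmetic difference is that you run the count in contrapositive form (a connected tuple-graph forces $\ell(\pi)\leq p+(r-1)(p-1)=(p-1)r+1$), whereas the paper counts the edges of a contracted spanning forest of $\pi$ (at most $pr-\ell(\pi)\leq r-2$ edges on $r$ vertices forces disconnectedness); the two counts are equivalent.
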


\begin{proof}
This is mostly a rewriting of the proof of the general upper bound on cumulants stated in Theorem \ref{theo:esti_cumu}. For the convenience of the reader, let us give a proof which is adapted to our situation; this will also enable us to introduce combinatorial objects which will play a major role in Section \ref{sec:homogeneous_case}. Given $\pi\in \mathfrak{Q}(pr)$, we construct a graph $G_\pi$ on the vertex set $V({G_\pi}) = [\![1,pr]\!]$ as follows. For any part $A$ of the set partition $\pi$, we associate a spanning tree $T_A$ of the set of vertices $A$, then we define $G_\pi$ as the disjoint union of those spanning trees. We have $\sum_{A \in \pi} (|E(T_A)|+1) = pr$. This implies $|E(G_\pi)|=\sum_{A \in \pi} |E(T_A)| \leq r-2$ by the assumption on $\ell(\pi)$. We now construct a multigraph $H_\pi$ with vertex set $V(H_\pi) = [\![1,r]\!]$, by contracting the vertices of the graph $G_\pi$ according to the map
\begin{align*}
(b^{-1})_1 : [\![1,rp]\!] &\to [\![1,r]\!]\\
(k-1)p+l &\mapsto k.
\end{align*}
The multigraph $H_\pi$ has the same number of edges as $G_\pi$, so $E(H_\pi)=E(G_\pi) \leq r-2$ and $H_\pi$ is not connected. As a consequence, if $[\![1,r]\!]=A \sqcup B$ are two non-connected components and $I=(\ibar^1,\ldots,\ibar^r)$ is a family of indices such that $\mathrm{Sp}_n(I)=\pi$, then the two families of indices $\bigcup_{a \in A} \ibar^a$ and $\bigcup_{b \in B} \ibar^b$ are disjoint. This implies that $\kappa(\pi,\varphi) = 0$, by using the third property in Proposition \ref{prop:cumulants}. 
\end{proof}\medskip

\subsection{Limiting variance and asymptotics of the fluctuations}

In order to apply Theorem \ref{theo:esti_cumu}, we also have to compute the limiting parameters $\sigma^2$ and $L$ involved in the method of cumulants. Identifying the leading terms in Equation \eqref{eq:expansion_cumulant}, we obtain:
\begin{align*}
\frac{\kappa^{(2)}(S_n(\varphi,\spa))}{N_n D_n} = \frac{\kappa^{(2)}(S_n(\varphi,\spa))}{p^2 n^{2p-1}} = \frac{1}{p^2} \sum_{\substack{\pi \in \mathfrak{Q}(2p) \\ \ell(\pi) = 2p-1}} \kappa(\pi,\varphi)  + O\left(\frac{1}{n}\right).
\end{align*}
For $k,l \in [\![1,p]\!]$, we define the partition 
\begin{equation}
\pi_{k,l} = \{k,l+p\} \cup \{ \{t\} \, ; \, t \in [\![1,2p]\!] \setminus \{k,l+p\} \}=\,  \begin{tikzpicture}[scale=0.5, baseline=1mm]
\foreach \x in {0,1,2,3,4,5}
{\fill (\x,0) circle (3pt);
\fill (\x,1) circle (3pt);}
\draw (1,1) -- (3,0);
\draw (1,1.5) node {\footnotesize $k$};
\draw (3,-0.5) node {\footnotesize $l$};
\end{tikzpicture}
\,\,\,;\label{eq:pi_kl}
\end{equation}
the picture above of the set partition makes appear the integers in $[\![1,p]\!]$ on the top row, and the integers in $[\![p+1,2p]\!]$ on the bottom row. We then have:
\begin{align*}
\frac{\kappa^{(2)}(S_n(\varphi,\spa))}{N_n D_n} = \frac{1}{p^2} \sum_{1 \leq k,l \leq p} \kappa(\pi_{k,l},\varphi)  + O\left(\frac{1}{n}\right).
\end{align*}
Indeed, a set partition $\pi$ of $[\![1,2p]\!]$ with length $2p-1$ consists of a pair $\{k,l\}$ and of singletons, and if the pair $\{k,l\}$ is included in $[\![1,p]\!]$ or in $[\![p+1,2p]\!]$, then the graph $H_{\pi}$ introduced during the proof of Proposition \ref{prop:vanish generic} is not connected (it is the graph on $2$ vertices and without edge), so $\kappa(\pi,\varphi)=0$.
Similarly, we compute the limiting third cumulant $L$:
\begin{align*}
\frac{\kappa^{(3)}(S_n(\varphi,\spa))}{N_n (D_n)^2} =\frac{\kappa^{(3)}(S_n(\varphi,\spa))}{p^4 n^{3p-2}} =\frac{1}{p^4} \sum_{\substack{\pi \in \mathfrak{Q}(3p) \\ \ell(\pi) = 3p-2}} \kappa(\pi,\varphi)  + O\left(\frac{1}{n}\right).
\end{align*}
For $i,j,k,l \in [\![1,p]\!]$ with $j \neq k$, we define the partition:
\begin{align}
\pi_{i,j,k,l} &= \{i,j+p\} \cup \{k+p,l+2p\} \cup \{ \{t\} \, ; \, t \in [\![1,3p]\!] \setminus \{i,j+p,k+p,l+2p\} \} \nonumber\\
&=\,\, \begin{tikzpicture}[scale=0.5, baseline=4mm]
\foreach \x in {0,1,2,3,4,5}
{\fill (\x,0) circle (3pt);
\fill (\x,1) circle (3pt);
\fill (\x,2) circle (3pt);}
\draw (1,1) -- (3,0);
\draw (4,1) -- (5,2);
\draw (0.7,1.2) node {\footnotesize $k$};
\draw (4.3,0.8) node {\footnotesize $j$};
\draw (5,2.5) node {\footnotesize $i$};
\draw (3,-0.5) node {\footnotesize $l$};
\end{tikzpicture}\,\,,\label{eq:pi_ijkl}
\end{align}
and if $j=k$:
\begin{align*}
\pi_{i,j,j,l} = \{i,j+p,l+2p\} \cup \{ \{t\} \, ; \, t \in [\![1,3p]\!] \setminus \{i,j+p,l+2p\} \} = \,\, \begin{tikzpicture}[scale=0.5, baseline=4mm]
\foreach \x in {0,1,2,3,4,5}
{\fill (\x,0) circle (3pt);
\fill (\x,1) circle (3pt);
\fill (\x,2) circle (3pt);}
\draw (2,1) -- (3,0);
\draw (2,1) -- (5,2);
\draw (1.5,1) node {\footnotesize $j$};
\draw (5,2.5) node {\footnotesize $i$};
\draw (3,-0.5) node {\footnotesize $l$};
\end{tikzpicture}\,\,.
\end{align*}
\PLM{These are the only possible forms for a set partition of $[\![1,3p]\!]$ with length $3p-2$ and with the condition that $H_\pi$ is connected. For the $\pi_{i,j,k,l}$'s with $j \neq k$, we also need to take into account the set partitions where two elements of the top row or of the bottom row (instead of the middle row) are connected to elements of the other rows; this leads to a factor $3$ in the enumeration}.
Thus, we have:
\begin{align*}
\frac{\kappa^{(3)}(S_n(\varphi,\spa))}{N_n (D_n)^2} = \frac{1}{p^4} \sum_{1 \leq i,j,k,l \leq p} c_{i,j,k,l}\,\kappa(\pi_{i,j,k,l},\varphi)  + O\!\left(\frac{1}{n}\right)
\end{align*}
with 
\begin{equation}
c_{i,j,k,l} = \begin{cases}
	3 &\text{if }j\neq k,\\
	1 & \text{if }j=k.
\end{cases}\label{eq:coefficients_cijkl}
\end{equation}
\PLM{Similar formulas were obtained in \cite[Section 5]{2017arXiv171206841F} for the limiting behavior of the first cumulants of observables of random graphs associated to a graphon parameter. We have now established:}

\begin{theo}[Fluctuations in the generic case]\label{theo:generic_case}
Let $\spa = \spaex \in \mathbb{M}$ a metric measure space and $\Phi=\Phi^{p,\varphi} \in \Pi$ a polynomial.
\begin{enumerate}
\item The random variable $S_n(\varphi,\spa) = n^p\,\Phi(\spa_n)$ satisfies the hypotheses of the method of the cumulants 
\begin{itemize}
	\item with parameters $D_n = p^2 n^{p-1}$, $N_n = n^p$ and $A = \|\varphi\|_{\infty}$,
	\item and with limits $\sigma^2 = \frac{1}{p^2} \sum_{1 \leq k,l \leq p} \kappa(\pi_{k,l},\varphi)$ and $L = \frac{1}{p^4} \sum_{1 \leq i,j,k,l \leq p} c_{i,j,k,l}\,\kappa(\pi_{i,j,k,l},\varphi)$.
\end{itemize} 
In the formul{\ae} for $\sigma^2$ and $L$, $\kappa(\pi,\varphi)$ with $\pi $ set partition of $[\![1,pr]\!]$ is defined by Equation \eqref{eq:kappa_pi}, and the coefficients $c_{i,j,k,l}$ are given by Equation \eqref{eq:coefficients_cijkl}; the diagrams of the set partitions are drawn in Equations \eqref{eq:pi_kl} and \eqref{eq:pi_ijkl}.
\item  If $\sigma(\varphi,\spa) > 0$, then the random variables
\begin{align*}
Y_n(\varphi,\spa) = \frac{\Phi(\spa_n) - \Phi(\spa)}{\sqrt{\var(\Phi(\spa_n))}}
\end{align*}
satisfy all the limiting results from Theorem~\ref{theo:esti_cumu}. In particular, we have the convergence in law $Y_n(\varphi,\spa) \rightharpoonup_{n \to \infty} \mathcal{N}(0,1)$, and
$$d_{\mathrm{Kol}}\left(Y_n(\varphi,\spa),\mathcal{N}(0,1)\right) = O\left(\left(\frac{\|\varphi\|_\infty}{\sigma}\right)^3\,p\,n^{-1/2}\right).$$
Under the assumption $\sigma(\varphi,\spa)>0$, the renormalisation $\sqrt{\var(\Phi(\spa_n))}$ is of order $n^{-1/2}$, and more precisely, $\var(\Phi(\spa_n))$ is a polynomial in $n^{-1}$ without constant term, and with leading term $p^2\,\sigma^2(\varphi,\spa)\,n^{-1}$.
\end{enumerate}
\end{theo}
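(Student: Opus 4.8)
The plan is to assemble the ingredients already established in this section; no new idea is required.

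\emph{First part.} The cumulant bound $|\kappa^{(r)}(S_n(\varphi,\spa))| \le n^p(2p^2n^{p-1})^{r-1}r^{r-2}(\|\varphi\|_\infty)^r$ is exactly what we obtained above by feeding the dependency graph $G$, with $N=n^p$ and $D\le p^2n^{p-1}$, into Theorem~\ref{theo:bounds_dependency}; this is the first condition in the definition of the hypotheses of the method of cumulants, with $N_n=n^p$, $D_n=p^2n^{p-1}$ (so $D_n/N_n=p^2/n\to 0$, the sparsity hypothesis) and $A=\|\varphi\|_\infty$. For the second condition I would invoke Proposition~\ref{prop:polynomiality}: $\kappa^{(2)}(S_n)$ and $\kappa^{(3)}(S_n)$ are polynomials in $n$ of degrees at most $2p-1$ and $3p-2$. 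Dividing the expansion~\eqref{eq:expansion_cumulant} by $N_nD_n=p^2n^{2p-1}$ (resp.\ by $N_nD_n^2=p^4n^{3p-2}$), the terms $n^{\downarrow\ell(\pi)}$ with $\ell(\pi)$ maximal give the limit and all others are $O(1/n)$; by Proposition~\ref{prop:vanish generic} and the connectivity analysis in its proof, the surviving set partitions are precisely the $\pi_{k,l}$ of~\eqref{eq:pi_kl} for $r=2$, yielding $\sigma^2=\frac{1}{p^2}\sum_{1\le k,l\le p}\kappa(\pi_{k,l},\varphi)$, and the $\pi_{i,j,k,l}$ of~\eqref{eq:pi_ijkl} weighted by the multiplicities $c_{i,j,k,l}$ of~\eqref{eq:coefficients_cijkl} for $r=3$, yielding $L=\frac{1}{p^4}\sum_{1\le i,j,k,l\le p}c_{i,j,k,l}\,\kappa(\pi_{i,j,k,l},\varphi)$. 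Since each quotient is a polynomial in $n$, the error in replacing it by its limit is $O(1/n)$, which is $o((D_n/N_n)^{1/3})$ for $r=2$ and $o(1)$ for $r=3$; this is the second condition, and the first part is proved.

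\emph{Second part.} Assuming $\sigma^2>0$, I would apply Theorem~\ref{theo:esti_cumu} to $S_n=S_n(\varphi,\spa)=n^p\Phi(\spa_n)$, which also satisfies $|S_n|\le n^p\|\varphi\|_\infty=N_nA$ almost surely, so that even the concentration inequality applies. Homogeneity of the normalisation gives $(S_n-\esper[S_n])/\sqrt{\var(S_n)}=(\Phi(\spa_n)-\esper[\Phi(\spa_n)])/\sqrt{\var(\Phi(\spa_n))}=:\widetilde Y_n$, so $\widetilde Y_n$ satisfies all the conclusions of Theorem~\ref{theo:esti_cumu}. It then remains to pass from the centering by $\esper[\Phi(\spa_n)]$ to the centering by $\Phi(\spa)$, i.e.\ from $\widetilde Y_n$ to $Y_n(\varphi,\spa)$. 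Here I would observe that $\esper[\Phi(\spa_n)]=n^{-p}\sum_{\ibar}\esper[\varphi(d(X_{\ibar}))]$, that the $n^{\downarrow p}$ tuples with pairwise distinct entries each contribute $\Phi(\spa)$ by exchangeability, and that the remaining $O(n^{p-1})$ tuples contribute bounded terms, so $\esper[\Phi(\spa_n)]-\Phi(\spa)=O(n^{-1})$. By the first part, $\var(S_n)$ has degree $2p-1$ with leading coefficient $p^2\sigma^2>0$, hence $\var(\Phi(\spa_n))=\var(S_n)/n^{2p}\sim p^2\sigma^2/n$; therefore the deterministic shift $Y_n(\varphi,\spa)-\widetilde Y_n=(\esper[\Phi(\spa_n)]-\Phi(\spa))/\sqrt{\var(\Phi(\spa_n))}$ is $O(n^{-1/2})$. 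This is negligible for the central limit theorem (Slutsky), and since translating the standard Gaussian cdf by $t$ moves it by at most $|t|/\sqrt{2\pi}$, it only changes the constant in the Berry--Esseen bound, giving $d_{\mathrm{Kol}}(Y_n(\varphi,\spa),\mathcal{N}(0,1))=O((\|\varphi\|_\infty/\sigma)^3\,p\,n^{-1/2})$. Finally, every $n^{\downarrow\ell(\pi)}$ in~\eqref{eq:expansion_cumulant} is divisible by $n$, so $\var(S_n)=\kappa^{(2)}(S_n)$ is a polynomial in $n$ with no constant term and leading term $p^2\sigma^2n^{2p-1}$; dividing by $n^{2p}$, $\var(\Phi(\spa_n))$ is a polynomial in $1/n$ with no constant term and leading term $p^2\sigma^2n^{-1}$, which is the last assertion.

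\emph{Main obstacle.} I do not expect a genuine difficulty: the substantial work — the dependency-graph cumulant bound of Theorem~\ref{theo:bounds_dependency}, the polynomiality of Proposition~\ref{prop:polynomiality}, the vanishing/connectivity argument of Proposition~\ref{prop:vanish generic}, and the identification of the extremal set partitions with their multiplicities $c_{i,j,k,l}$ — has all been carried out before the statement. The only points needing care are bookkeeping ones: correctly tracking the factor $n^p$ relating $S_n$ to $\Phi(\spa_n)$, and controlling the $O(n^{-1})$ gap between $\esper[\Phi(\spa_n)]$ and $\Phi(\spa)$ so as to transfer the central limit theorem and the Berry--Esseen estimate from the empirical centering to the centering by $\Phi(\spa)$.
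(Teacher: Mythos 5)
Your proof is correct and takes essentially the same route as the paper, which simply assembles the ingredients established just before the theorem: the dependency-graph cumulant bound with $N_n=n^p$, $D_n=p^2n^{p-1}$, $A=\|\varphi\|_\infty$, the polynomiality and vanishing propositions, and the identification of the extremal set partitions $\pi_{k,l}$ and $\pi_{i,j,k,l}$ with their multiplicities. Your explicit handling of the $O(n^{-1})$ gap between $\esper[\Phi(\spa_n)]$ and $\Phi(\spa)$ (needed to pass from the centering of Theorem~\ref{theo:esti_cumu} to the centering by $\Phi(\spa)$ in the statement) is a bookkeeping point the paper leaves implicit, and you resolve it correctly.
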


\PLM{With the terminology of \cite[Section 6, Definition 30]{2017arXiv171206841F}, the theorem above ensures that the pair $(\mathbb{M},\Pi)$ is a \emph{mod-Gaussian moduli space}: generically (as soon as $\sigma(\varphi,\spa)>0$), an observable of the Gromov--Prohorov sample model of a mm-space $\spa$ has normal fluctuations of size $O(n^{-1/2})$, and the limiting variance $\sigma^2(\varphi,\spa)$ writes as an observable $\kappa_2(\varphi,\varphi) \in \Pi$ evaluated on the mm-space $\spa$. In this setting, a general problem is to identify the singular points of the space $\mathbb{M}$, that is to say the mm-spaces such that $\sigma^2(\varphi,\spa)=0$ for any function $\varphi \in \Ccal_b(\R^{\binom{p}{2}})$, and thus such that the fluctuations of $\Phi^{p,\varphi}(\spa_n)$ are of order smaller than $n^{-1/2}$. The next sections of this paper are devoted to this topic.}
\bigskip

\section{Fluctuations in the homogeneous case}\label{sec:homogeneous_case}

In this section, we place ourselves in the singular case of the Gromov--Prohorov sample model, where 
\begin{align}
\forall p \geq 1,\,\,\forall \varphi \in \Ccal_b\left(\mathbb{R}^{\binom{p}{2}}\right),\,\,\, \sigma^2 (\varphi,\spa) = \frac{1}{p^2} \sum_{1 \leq k,l \leq p} \kappa(\pi_{k,l},\varphi) = 0.\label{eq:singular_case}
\end{align}
This implies that $\frac{\Phi(\spa_n)-\Phi(\spa)}{\sqrt{n}}$ converges in probability to $0$ for any observable $\Phi \in \Pi$. A condition which implies \eqref{eq:singular_case} and which is much easier to check is:
\begin{align}\label{eq:cov_vanish}
\forall p \geq 1,\,\,\forall k,l \in [\![1,p]\!],\,\,\forall \varphi \in \Ccal_b\left(\mathbb{R}^{\binom{p}{2}}\right), \,\,\,\,\kappa(\pi_{k,l},\varphi) = 0.
\end{align}
It is not known whether it is possible to have \eqref{eq:singular_case} without having \eqref{eq:cov_vanish}. We strongly believe that these two conditions are actually equivalent; let us detail a bit why this should be true. In Section \ref{sec:circle}, we shall introduce monomial observables of mm-spaces which are indexed by finite multigraphs; Equations \eqref{eq:singular_case} and \eqref{eq:cov_vanish} correspond to relations between the values of these observables on a mm-space. This viewpoint leads then to questions of graph theory, and a combinatorial study of these relations should allow one to understand whether Condition \eqref{eq:cov_vanish} is strictly stronger than, or equivalent to Condition \eqref{eq:singular_case}; we aim to address this problem in a forthcoming paper. Let us mention that a analogous problem occurs in the study of fluctuations of graphon models, where the Erd\H{o}s--Rényi random graphs are singular models but may not be the only singular points; see again \cite{2017arXiv171206841F}. In the remainder of the article, we assume that Condition \eqref{eq:cov_vanish} is satisfied, and we prove the following results:
\begin{enumerate}
	\item This probabilistic condition is equivalent to a geometric property for the space $\spa$, namely, $\spa$ is a compact homogeneous space $G/K$ on which the compact group $G$ acts by isometry; see Theorem \ref{theo:homogeneous}.
	\item In this situation, for any observable $\Phi \in \Pi$, $n(\Phi(\spa_n)-\Phi(\spa))$ converges in distribution toward a law which is determined by its moments (Theorem \ref{theo:singular_case}).
	\item The limiting distribution is not necessarily Gaussian; we provide in Section \ref{sec:circle} an explicit example when $\spa$ is the circle.
\end{enumerate}

Let us introduce a few more notations. Given $\spa=\spaex \in \mathbb{M}$, we denote $\mathrm{Isomp}(\spa)$ the group of isometries $i : \mathcal{X} \to \mathcal{X}$ which are measure-preserving: 
  $$d(\cdot,\cdot) = d(i(\cdot),i(\cdot))\quad \text{and}\quad i_*\mu=\mu.$$
The group $\mathrm{Isomp}(\spa)$ is endowed
with the topology of uniform convergence on compact subsets, which is defined by the neighborhoods
\begin{align}
V(i,K,\epsilon) = \left\{ j \in \mathrm{Isomp}(\spa) \, , \, \sup\limits_{x\in K} d(i(x),j(x)) < \epsilon \right\}\label{eq:neighborhood}
\end{align} 
for $i \in \mathrm{Isomp}(\spa)$, $K$ compact subset of $\mathcal{X}$ and $\epsilon > 0$. The group action of $G=\mathrm{Isomp}(\spa)$ on $\mathcal{X}$ is the continuous map
\begin{align*}
\mathrm{Isomp}(\spa) \times \mathcal{X} &\to \mathcal{X} \\
(g,x) &\mapsto g \cdot x = g(x).
\end{align*}
The orbit of $x \in \mathcal{X}$ is 
$O_x = \left\{y\in \mathcal{X}\mid\exists g \in G : y = g \cdot x \right\}$, and the stabilizer of $x$ is the subgroup of $G$ given by $\mathrm{St}_x = \{ g \in \mathrm{G} \mid g \cdot x = x\}
$. For a subgroup $K$ of a group $G$, we denote by $G/K$ the space of left cosets of the group $G$ over $K$, and 
\begin{align*}
\pi : G &\to G/K\\
g &\mapsto \bar{g}= gK
\end{align*}
 the canonical projection map. The group action by left translations of $G$ on $G/K$ is $g \cdot \bar{g_1} = \overline{gg_1}$. For any $x \in \spa$, we have the bijection
\begin{align*}
\left\{\begin{array}{ccc} G/\mathrm{St}_x & \to & O_x\\ \bar{g} & \mapsto & g \cdot x. \end{array}\right.
\end{align*}
Finally, we denote $\mathcal{X}_\mu^\mathbb{N}$ the space of $\mu$-equidistributed sequences: 
\begin{align*}
\mathcal{X}_{\mu}^\mathbb{N} = \left\{(x_n)_{n \in \mathbb{N}} \mid \frac{1}{n}\sum_{i=1}^n \delta_{x_i} \rightharpoonup_{ n \to +\infty } \mu \right\}.
\end{align*}
\medskip

\subsection{Equivalence between small variance and compact homogeneity}
The following theorem characterizes the singular case~\eqref{eq:cov_vanish}, where the variance of $\Phi(\spa_n)$ is at most of order $1 / n^2$ for any polynomial $\spa$. 
\PLM{Let us restate in simpler words our Condition \eqref{eq:cov_vanish}. Given $1\leq k,l \leq p$, suppose that for any $\varphi \in \Ccal_b(\R^{\binom{p}{2}})$, we have
$$0=\kappa(\pi_{k,l},\varphi) = \mathrm{cov}\left(\varphi(d(X_1,\ldots,X_p)),\varphi(d(X_1',\ldots,\overset{(l)}{X_k},\ldots,X_p'))\right).$$
By polarisation, the covariance between any two bounded continuous functions $\psi_1$ and $\psi_2$ of the distances vanishes:
$$\mathrm{cov}\left(\psi_1(d(X_1,\ldots,X_p)),\psi_2(d(X_1',\ldots,\overset{(l)}{X_k},\ldots,X_p'))\right)=0.$$
In particular, taking 
\begin{align*}
\psi_1(d(x_1,\ldots,x_p))&=\varphi(d(x_k,x_1,x_2,\ldots,x_{k-1},x_{k+1},\ldots,x_p));\\
\psi_2(d(x_1,\ldots,x_p))&=\varphi(d(x_l,x_1,x_2,\ldots,x_{l-1},x_{l+1},\ldots,x_p)),
\end{align*}
we obtain
\begin{equation}
\mathrm{cov}\left(\varphi(d(X_1,\ldots,X_p)),\varphi(d(X_1,X_2',\ldots,X_p'))\right)=\kappa(\pi_{1,1},\varphi) =0.\label{eq:cov_vanish2}
\end{equation}
Thus, the vanishing of one kind of covariance $\kappa(\pi_{k,l},\varphi)$ is equivalent to the vanishing of all these covariances for $1 \leq k,l \leq p$, and in the sequel we shall work with the case $k=l=1$.
}
We recall that $\nu$ is the map that associates to any point in $\mathcal{X}$ the law of the random variable $d(x,(X_n)_{n \in \N})$.
\begin{theo}\label{theo:homogeneous}
The following assertions are equivalent:
\begin{enumerate}
\item For all $p \geq 1$ and $\varphi \in \Ccal_b(\mathbb{R}^{\binom{p}{2}})$, $\mathrm{cov}(\varphi(d(X_1,\ldots,X_p)),\varphi(d(X_1,X_2',\ldots,X_p')))=0$.
\item The map $\nu$ is constant.
\item The action of $\mathrm{Isomp}(\spa)$ on $\mathcal{X}$ is transitive.
\item There exists a compact topological group $G$, and $K$ a closed subgroup of $G$ such that 
$$\spaex = (G/K, d_{G/K}, \mu_{G/K}),$$ where $d_{G/K}$ is a distance invariant by the action of $G$ ($d_{G/K}(\overline{gg_1},\overline{gg_2}) = d_{G/K}(\overline{g_1},\overline{g_2})$),
and $\mu_{G/K} = \pi_{*}(\mathrm{Haar}_G)$ is the push-forward of the Haar measure of $G$.
\end{enumerate}
\end{theo}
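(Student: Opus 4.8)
The plan is to prove all equivalences by running the chain $(1)\Leftrightarrow(2)$, $(2)\Rightarrow(3)$, $(3)\Rightarrow(4)$, $(4)\Rightarrow(2)$. The first implication is soft, the implications $(3)\Rightarrow(4)$ and $(4)\Rightarrow(2)$ are a compactness computation, and $(2)\Rightarrow(3)$ — a pointed version of Gromov's reconstruction theorem — is the technical core.

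\emph{Step $(1)\Leftrightarrow(2)$.} I would first rewrite the covariance in $(1)$ by conditioning on $X_1=x$: the two factors then become independent, so setting $F_\varphi(x) := \int_{\mathcal{X}^{p-1}}\varphi(\text{pairwise distances of }x,x_2,\dots,x_p)\,\mu^{\otimes(p-1)}(\mathrm{d}x_2\cdots \mathrm{d}x_p)$ one gets $\mathrm{cov}(\varphi(d(X_1,\dots,X_p)),\varphi(d(X_1,X_2',\dots,X_p')))=\esper[F_\varphi(X_1)^2]-\esper[F_\varphi(X_1)]^2=\var(F_\varphi(X_1))$. Hence $(1)$ holds if and only if $F_\varphi$ is $\mu$-a.s.\ constant for every $p$ and every $\varphi$. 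Now $F_\varphi(x)=\int_{\mathbb{R}^{\mathrm{met}}}\varphi((d_{i,j})_{1\leq i<j\leq p})\,\nu^x$, and a Borel probability measure on the Polish space $\mathbb{R}^{\mathrm{met}}$ is determined by the integrals of a suitable countable family of such cylinder functions (finite-dimensional marginals determine the measure, and each marginal is determined by countably many continuous test functions). Therefore ``$F_\varphi$ is $\mu$-a.s.\ constant for all $p,\varphi$'' is equivalent to ``$x\mapsto\nu^x$ is $\mu$-a.s.\ constant''. Finally $x\mapsto\nu^x$ is continuous (couple all $\nu^x$ with one sample $(X_n)$ and use continuity of $d$ and dominated convergence), and $\mathcal{X}=\mathrm{supp}\,\mu$, so a $\mu$-a.s.\ constant continuous function on $\mathcal{X}$ is genuinely constant; this upgrades the a.s.\ statement to $(2)$, and the converse is immediate from the same identity.

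\emph{Step $(2)\Rightarrow(3)$.} Almost surely the sample $(X_n)_{n\in\N}$ lies in $\mathcal{X}_\mu^{\N}$, hence (since $\mathcal{X}=\mathrm{supp}\,\mu$) is dense in $\mathcal{X}$. On this event, the distance matrix of $(x,X_0,X_1,\dots)$ determines intrinsically: the metric completion of $\{X_n\}$ (which is $\mathcal{X}$, by density and completeness), the empirical limit measure (which is $\mu$, by equidistribution), and the marked point $x$ (the unique point of the completion whose distances to the $X_n$ are the recorded ones — unique because some subsequence of $(X_n)$ tends to $x$). This yields a measurable reconstruction map $\mathrm{Rec}$ on a $\nu^x$-full subset of $\mathbb{R}^{\mathrm{met}}$ with $\mathrm{Rec}_*\nu^x=\delta_{[(\spa,x)]}$, where $[(\spa,x)]$ is the class of the pointed mm-space modulo measure-preserving isometry; alternatively one invokes Gromov's reconstruction theorem in its pointed form. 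Consequently $\nu^x=\nu^y$ forces the existence of a measure-preserving isometry $\psi$ of $\mathcal{X}$ with $\psi(x)=y$, and since $(2)$ gives $\nu^x=\nu^y$ for all $x,y$, the group $\mathrm{Isomp}(\spa)$ acts transitively. \emph{This is the main obstacle}: one must honestly produce a full-measure set of reconstructible matrices together with a Borel reconstruction map, and upgrade an equality of laws to an actual isometry.

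\emph{Steps $(3)\Rightarrow(4)$ and $(4)\Rightarrow(2)$.} First, $(3)$ forces $\mathcal{X}$ to be compact: for each $\epsilon>0$ the map $x\mapsto\mu(B(x,\epsilon))$ is $\mathrm{Isomp}(\spa)$-invariant, hence constant, and positive because $\mathcal{X}=\mathrm{supp}\,\mu$; a maximal $2\epsilon$-separated subset then indexes pairwise disjoint balls of equal positive mass, so it is finite, whence $\mathcal{X}$ is totally bounded and (being complete) compact. Then $G:=\mathrm{Isomp}(\spa)$ is a group of isometries of a compact metric space, so by Arzelà--Ascoli it is a compact topological group (equicontinuity from $1$-Lipschitzness, relative compactness since values lie in compact $\mathcal{X}$, closedness because a uniform limit of surjective isometries of a compact space is again a surjective isometry and the constraint $i_*\mu=\mu$ passes to uniform limits). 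Fixing $x_0\in\mathcal{X}$ and putting $K:=\mathrm{St}_{x_0}$ (closed), the orbit map $\bar g\mapsto g\cdot x_0$ is a continuous bijection from the compact space $G/K$ onto the Hausdorff space $\mathcal{X}$, hence a homeomorphism; it carries $d$ to the $G$-invariant distance $d_{G/K}(\bar g_1,\bar g_2):=d(g_1x_0,g_2x_0)$, and carries $\mu$ to the unique $G$-invariant probability measure on $G/K$, which is $\pi_*(\mathrm{Haar}_G)$ (uniqueness: for a $G$-invariant $\rho$ one has $\int f\,\mathrm{d}\rho=\int\bigl(\int_G f(g\cdot x)\,\mathrm{d}\mathrm{Haar}(g)\bigr)\,\mathrm{d}\rho(x)$ and the inner integral is independent of $x$ by transitivity). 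This is $(4)$. Conversely, under $(4)$, for $x,y\in G/K$ choose $g\in G$ with $g\cdot x=y$; since $g$ is an isometry and $g_*\mu=\mu$, the distance matrices of $g\cdot(x,X_0,X_1,\dots)$ and of $(y,X_0',X_1',\dots)$ have the same law, so $\nu^y=\nu^x$ and $\nu$ is constant, which is $(2)$.
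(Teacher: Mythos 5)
Your proposal is correct and follows essentially the same route as the paper: the covariance identity reducing (1) to the $\mu$-a.s.\ constancy of $x\mapsto\nu^x$ (the paper detours through indicator functions and a Dynkin-system argument where you condition on $X_1$ directly and invoke a countable determining family, but the key integral identity is the same), the use of $\mu$-equidistributed dense sequences and Gromov reconstruction for $(2)\Rightarrow(3)$ (the paper avoids an explicit reconstruction map by intersecting two $\nu$-full images of $\mathcal{X}_\mu^{\N}$, which is your argument in different clothing), and the same compactness/Arzelà--Ascoli/unique-invariant-measure chain for $(3)\Leftrightarrow(4)$. No gaps.
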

%

\PLM{\begin{remark}
In the fourth item of Theorem \ref{theo:homogeneous}, the identification of $\spa$ as a compact homogeneous space has to be understood in the space $\mathbb{M}$, that is to say modulo measure-preserving isometries. In particular, one assumes that $\mathcal{X}$ is equal to the support of $\mu$.
\end{remark}}

\begin{proof}
\implication{(1)}{(2)} Let $A$ be a closed subset of $\mathbb{R}^{\binom{p}{2}}$. There exists a sequence $(\varphi_q)_{q \in \mathbb{N}}$ of positive continuous bounded functions converging pointwise to $\mathbbm{1}_{A}$ the indicator function of $A$: take $\varphi_q(x) =  \min(1,1-q \,d(x,A))$. Taking the limit in Equation~\eqref{eq:cov_vanish2} as $q$ goes to infinity, we obtain 
\begin{align*}
&\esper\left[\mathbbm{1}_{A}(d(X_1,X_2,\dots,X_{p}))\mathbbm{1}_{A}(d(X_1,X_2',\dots,X_{p}'))\right] \\
&= \esper\left[\mathbbm{1}_{A}(d(X_1,X_2,\dots,X_{p}))\right] \esper\left[\mathbbm{1}_{A}(d(X_1,X_2',\dots,X_{p}'))\right] .
\end{align*}
If $U = \R^{\binom{p}{2}} \setminus A$, then $1_U = 1-1_A$, so the same is true with $A$ open subset. Let us define the map
\begin{align*}
\mathrm{Ed}_{A} \colon \mathcal{X} &\to \mathbb{R} \\
x &\mapsto \mathbb{E}[\mathbbm{1}_{A}(d(x,X_2,\dots,X_p))].
\end{align*}
We have:
\begin{align*}
\int_{\mathcal{X}} \left(\mathrm{Ed}_{A}\right)^2(x) \, \mu(dx) &= \int_{\mathcal{X}} \mathbb{E}\left[\mathbbm{1}_{A}(d(x,X_2,\dots,X_{p}))\right]\mathbb{E}\left[\mathbbm{1}_{A}(d(x,X_2',\dots,X_{p}'))\right] \,\mu(dx) \\
&= \int_{\mathcal{X}} \mathbb{E}\left[\mathbbm{1}_{A}(d(x,X_2,\dots,X_{p}))\mathbbm{1}_{A}(d(x,X_2',\dots,X_{p}'))\right] \, \mu(dx) \\
&= \esper\left[\int_\mathcal{X} \mathbbm{1}_{A}(d(x,X_2,\dots,X_{p}))\mathbbm{1}_{A}(d(x,X_2',\dots,X_{p}')) \, \mu(dx)\right] \\
&= \esper\left[\mathbbm{1}_{A}(d(X_1,X_2,\dots,X_{p}))\mathbbm{1}_{A}(d(X_1,X_2',\dots,X_{p}'))\right] \\
&= \esper\left[\mathbbm{1}_{A}(d(X_1,X_2,\dots,X_{p}))\right] \esper\left[\mathbbm{1}_{A}(d(X_1,X_2',\dots,X_{p}'))\right] \\
&= \left(\esper\left[\mathbbm{1}_{A}(d(X_1,X_2,\dots,X_{p}))\right]\right)^2 \\
&= \left(\int_{\mathcal{X}} \mathrm{Ed}_{A}(x) \, \mu(dx)\right)^2,
\end{align*}
so the variance $\var(\mathrm{Ed}_{A})$ under $\mu$ vanishes. We have thus showed that
\begin{align*}
\forall A \text{ open set of } \mathbb{R}^{\binom{p}{2}}, \mu\text{-almost surely, }\mathrm{Ed}_{A} \text{ is constant} (= \int_{\mathcal{X}} \mathrm{Ed}_{A}(x) \, \mu(dx)). 
\end{align*}
Fix a countable basis of open subsets $(A_i)_{i \in \N}$ of $\mathbb{R}^{\binom{p}{2}}$. For any $A_{i_1},\dots,A_{i_n}$, there exists a set $\mathcal{X}_{A_{i_1},\dots,A_{i_n}}$ of $\mu$-measure $1$ such that $\mathrm{Ed}_{A_{i_1} \cup \dots \cup A_{i_n}}$ is constant on that set. Hence, there exists a set $\mathcal{X}_0 \subseteq \mathcal{X}$ of $\mu$-measure $1$ such that all the maps $\mathrm{Ed}_{A_{i_1} \cup \dots \cup A_{i_n}}$ are simultaneously constant on $\mathcal{X}_0$. We can replace in the previous statement $\mathcal{X}_0$ by $\mathcal{X}$, because by dominated convergence, $\mathrm{Ed}_{A}$ is continuous over $\mathcal{X}$, and by assumption, $\mathcal{X}$ is the support of $\mu$, that is to say the smallest closed subset with $\mu$-measure $1$. \medskip

Consider now an arbitrary open subset $A \subset \mathcal{X}$, and $x,y \in \mathcal{X}$. We can write $A$ as a union $\bigcup_{i \in I} A_i$, and for any finite subfamily $J \subset I$, we have by assumption
$$\mathrm{Ed}_{\bigcup_{i \in J}A_i} (x) = \mathrm{Ed}_{\bigcup_{i \in J}A_i} (y).$$
By making $J$ grow to $I$, we conclude that $\mathrm{Ed}_A(x)=\mathrm{Ed}_A(y)$. The set of all $A \subset \mathbb{R}^{\binom{p}{2}}$ such that $\mathrm{Ed}_{A}$ is constant is a Dynkin system, so we get that for any Borel subset $A$ of $\mathbb{R}^{\binom{p}{2}}$, the map $\mathrm{Ed}_{A}$ is constant over $\mathcal{X}$. This means that the law of $d(x,X_2,\ldots,X_p)$ is constant over $\mathcal{X}$. As this is true for any $p \geq 1$, and as the measurable structure of $\R^{\mathrm{met}}$ is defined by its finite projections, we conclude that $\nu^x$ does not depend on $x$.

\bigskip
\noindent \implication{(2)}{(1)}
Fix $x_0 \in \mathcal{X}$, and denote $(X_n')_{n \in \mathbb{N}}$ an independent copy of $(X_n)_{n \in \mathbb{N}}$. We can write
\begin{align*}
&\esper[\varphi(d(X_1,X_2,\dots,X_{p}))\,\varphi(d(X_1,X_2',\dots,X_{p}'))] \\ 
&= \esper\left[\int_{\mathcal{X}} \varphi(d(x,X_2,\dots,X_{p}))\varphi(d(x,X_2',\dots,X_{p}'))\, \mu(dx)\right]\\
&= \esper\left[ \varphi(d(x_0,X_2,\dots,X_{p}))\,\varphi(d(x_0,X_2',\dots,X_{p}'))\right]
\\
&= \esper\left[ \varphi(d(x_0,X_2,\dots,X_{p}))\right]\,\esper\left[\varphi(d(x_0,X_2',\dots,X_{p}'))\right]\\
&= \esper\left[\int_{\spa} \varphi(d(x,X_2,\dots,X_{p}))\,\mu(dx)\right]\,\esper\left[\int_{\spa}\varphi(d(x,X_2',\dots,X_{p}'))\,\mu(dx)\right]\\
&= \esper\left[ \varphi(d(X_1,X_2,\dots,X_{p}))\right]\,\esper\left[\varphi(d(X_1,X_2',\dots,X_{p}'))\right]
\end{align*}
because from the second point, the integrals inside the expectations do not depend on $x$.
\bigskip

\noindent \implication{(2)}{(3)} We adapt the arguments of \cite[Section $3\frac{1}{2}$]{gromov2007metric}.
Let $x,y \in \mathcal{X}$, we set $\nu^{eq} = \nu^x = \nu^y$ as the common value of the map $\nu$ by hypothesis.
The law of large numbers gives us $\mu^{\otimes\mathbb{N}} ( \mathcal{X}_\mu^\mathbb{N} ) = 1.$
Then
\begin{align*}
\nu^{eq}\left((\iota^\spa \circ S^x)(\mathcal{X}_\mu^\mathbb{N})\cap (\iota^\spa \circ S^y)(\mathcal{X}_\mu^\mathbb{N}) \right) &=  \nu^x\left((\iota^\spa \circ S^x)\left(\mathcal{X}_\mu^\mathbb{N}\right)\right) + \nu^y\left((\iota^\spa \circ S^y)\left(\mathcal{X}_\mu^\mathbb{N}\right)\right) 
\\
&\quad - \nu^{eq}\left((\iota^\spa \circ S^x)(\mathcal{X}_\mu^\mathbb{N})\cup (\iota^\spa \circ S^y)(\mathcal{X}_\mu^\mathbb{N}) \right)
\\
&\geq \mu^{\otimes \mathbb{N}}(\mathcal{X}_\mu^\mathbb{N}) + \mu^{\otimes \mathbb{N}}(\mathcal{X}_\mu^\mathbb{N}) - 1 = 1.
\end{align*}
It implies the existence of two sequences $(x_n)_{n \in \mathbb{N}}$ and $(y_n)_{n \in \mathbb{N}}$ in $\mathcal{X}^\mathbb{N}$  such that
\begin{itemize}
\item $x_0 = x$ et $y_0 = y$;
\item $(x_n)_{n \in \mathbb{N}}$ et $(y_n)_{n \in \mathbb{N}}$ are in $\mathcal{X}_\mu^\mathbb{N}$;
\item $(d(x_i,x_j))_{i,j} = (d(y_i,y_j))_{i,j}$.
\end{itemize}
By the Portmanteau theorem \cite[Theorem 2.1]{billing}, a $\mu$-equidistributed sequence is dense in the support of $\mu$.
Therefore, there exists a unique isometry $ i \colon \mathcal{X} \to \mathcal{X}$ such that for all $n \in \mathbb{N}$, $i(x_n) = y_n$.
We have for any continuous bounded function $f : \mathcal{X}\to \R$:
\begin{align*}
\frac{1}{n}\sum_{j=1}^n \delta_{x_j}(f \circ i) = \frac{1}{n}\sum_{j=1}^n \delta_{i(x_j)}(f)  = \frac{1}{n}\sum_{j=1}^n \delta_{y_j}(f) .
\end{align*}
By taking the limit of this identity as $n$ goes to infinity, we obtain $\mu(f \circ i) = \mu(f)$. This is true for any $f \in \Ccal_b(\mathcal{X})$, so by \cite[Theorem 1.2]{billing}, $i_*\mu=\mu$. We have therefore constructed $i \in \mathrm{Isomp}(\spa)$ such that $i(x)=y$.
\bigskip

\noindent \implication{(3)}{(2)}
Let $x,y \in \mathcal{X}$, by $3.$, there exists an isometry $i \colon \mathcal{X} \to \mathcal{X}$ with $i(x) = y$ and $i_* \mu = \mu$. We can define $i^\N \colon \mathcal{X}^\N \to \mathcal{X}^\N$ with $i^\N((x_n)_{n \in \N}) = (i(x_n))_{n \in \N}$. We get $(i^\N)_* \mu^{\otimes \N} = \mu^{\otimes \N}$. Let $\varphi \colon \R^{\mathrm{met}} \to \R$ a bounded continuous function, we have with $x_0 = x$ and $y_0 = y$,
\begin{align*}
\int_{\R^{\mathrm{met}}} \varphi(z) \, \nu^x(z) &= \int_{\mathcal{X}^\N} \varphi(d((x_n)_{n \in \N})) \, \mu^{\otimes \N}((x_{n+1})_{n \in \N})
\\
&= \int_{\mathcal{X}^\N} \varphi(d((i(x_n))_{n \in \N}) \, \mu^{\otimes \N}((x_{n+1})_{n \in \N})
\\
&= \int_{\mathcal{X}^\N} \varphi(d((y_n)_{n \in \N}) \, (i^\N)_* \mu^{\otimes \N}((y_{n+1})_{n \in \N})
\\
&= \int_{\mathcal{X}^\N} \varphi(d((y_n)_{n \in \N}) \, \mu^{\otimes \N}((y_{n+1})_{n \in \N})
\\
&=\int_{\R^{\mathrm{met}}} \varphi(z) \, \nu^y(z),
\end{align*}
so $\nu^x=\nu^y$.
\bigskip

\noindent \implication{(4)}{(3)} The action of $G$ on $G/K$ gives rise to translations $(\tau_g)_{g \in G}$ with $\tau_g(\overline{g_1}) = \overline{gg_1}$; they form a subgroup of $\mathrm{Isomp}(G/K)$. For $\overline{g_1}, \overline{g_2} \in G/K$, the translation $\tau_{g_2 g_1^{-1}}$ sends $\overline{g_1}$ to $\overline{g_2}$, so $\mathrm{Isomp}(G/K)$ is transitive on $G/K$.
\bigskip

\noindent \implication{(3)}{(4)} Let $(x_n)_{n \in \mathbb{N}}$ a dense sequence in $\spa$ and
\begin{align*}
D_{\spa, \epsilon} = \left\{ I \subseteq \mathcal{P}(\mathbb{N}) \mid \text{the union }\cup_{n\in I}B(x_n,\epsilon) \text{ is disjoint}\right\};
\end{align*}
this is a poset for the inclusion order, and it is stable by increasing union. We build by induction a maximal element of this set. We set $A_0 = B(x_0,\epsilon)$ and $I_0 = \{0\}$, and then for any $n \in \N$:
\begin{itemize}
\item if $B(x_{n+1},\epsilon)\cap A_n = \emptyset$, then $A_{n+1} = A_n \sqcup B(x_{n+1},\epsilon)$ and $I_{n+1} = I_n \sqcup \{n+1\}$;
\item otherwise, $A_{n+1} = A_n$ and $I_{n+1} = I_n$.
\end{itemize}
Consider $I_{\max} = \bigcup_{n \in \mathbb{N}} I_n$.
\begin{enumerate}
 	\item The set of indices $I_{\max}$ is a maximal element of $\left(D_{\spa, \epsilon}, \subseteq\right)$: 
 	if $n \notin I_{\max}$, then $B(x_{n},\epsilon) \cap A_{n-1}$ is non-empty, and \emph{a fortiori} 
 	$$B(x_n,\epsilon)\cap \left(\bigsqcup_{i \in I_{\max}} B(x_i,\epsilon)\right)\neq \emptyset;$$ 
 	therefore, we cannot add $n$ to $I_{\max}$ and stay in $D_{\spa,\epsilon}$.
 	\item We have $\mathcal{X} = \bigcup_{n \in I_{\max}} B(x_n, 3\epsilon)$. If $x \in \mathcal{X}$, since $(x_n)_{n \in \mathbb{N}}$ is dense in $\mathcal{X}$, there exists $n \in \mathbb{N}$ such that $x \in B(x_n,\epsilon)$. If $n \in I_{\max}$, then obviously $$x \in \bigsqcup_{n \in I_{\max}} B(x_n,\epsilon) \subset \bigcup_{n \in I_{\max}} B(x_n, 3\epsilon),$$
 	 and if $n$ is not in $I_{\max}$, then there exists $n' \in I_{\max}$ such that $ y \in B(x_n,\epsilon)\cap B(x_{n'},\epsilon) \neq \emptyset$. Hence, we have
\begin{align*}
d(x,x_{n'}) \leq d(x,x_n) + d(x_n,y) +d(y,x_{n'}) \leq 3\epsilon.
\end{align*}
\item The set $I_{\max}$ is finite. Indeed, because the action of $\mathrm{Isomp}(\spa)$ over $\spa$ is transitive, the following map is constant:
\begin{align*}
\mathcal{X} &\to \mathbb{R} \\
x &\mapsto \mu(B(x, \epsilon))
\end{align*}
with common value denoted $\mu_{\epsilon}>0$. Consequently,
\begin{align*}
1 \geq \mu\left(\bigsqcup_{n \in I_{\max}}B(x_n,\epsilon)\right) = \sum_{n \in I_{\max}} \mu(B(x_n,\epsilon)) = \mathrm{card}(I_{\max})\,\mu_{\epsilon} 
\end{align*}
because $\mu$ is a probability measure.
 \end{enumerate} 
So, $I_{\max}$ is finite, and we have proved that $\mathcal{X}$ is a pre-compact space. Since $\mathcal{X}$ is complete, $\mathcal{X}$ is compact.
\PLM{ The group of isometries $\mathrm{Isom}(\spa)$ endowed with the compact-open topology defined by the neighborhoods $V(i,K,\epsilon)$ from Equation \eqref{eq:neighborhood} is also a compact Hausdorff space:
 \begin{itemize}
	\item It is a general fact that given two compact metric spaces $\mathcal{X}$ and $\mathcal{Y}$, the space of continuous functions $\Ccal(\mathcal{X},\mathcal{Y})$ endowed with the compact-open topology is metrised by $d(f,g) = \sup_{x \in \mathcal{X}} d(f(x),g(x))$; see \cite[Chapter XII, Section 8]{Dug66}. By restriction, the topology of $\mathrm{Isom}(\mathcal{X})$ is metrisable.
	\item The compactness of $\mathrm{Isom}(\spa)$ is then an immediate application of the Arzela--Ascoli theorem.
\end{itemize}
The subgroup of measure-preserving isometries $\mathrm{Isomp}(\spa)$ is a closed subgroup of $\mathrm{Isom}(\spa)$, hence also compact.}
Since the action of $\mathrm{Isomp}(\spa)$ over $\mathcal{X}$ is transitive, we have $\mathrm{O}_x = \mathcal{X}$ for each $x \in \mathcal{X}$. 
Therefore, we have the following homeomorphism (see \cite[Theorem 2.3.2]{mneimne1986introduction}):
\begin{align*}
\psi : \mathrm{Isomp}(\spa)/\mathrm{St}_x & \to  \mathcal{X}\\ \bar{g} & \mapsto  g \cdot x. 
\end{align*}
Denote $G = \mathrm{Isomp}(\spa)$ and $K=\mathrm{St}_x$, $x$ being an arbitrary reference point in the space $\mathcal{X}$. The homeomorphism $\psi$ allows one to transport the distance $d$ of $\mathcal{X}$  to a $G$-invariant distance $d_{G/K}(\cdot,\cdot)=d(\psi^{-1}(\cdot),\psi^{-1}(\cdot))$, and the measure $\mu$ to a $G$-invariant probability measure $\mu_{G/K}=(\psi^{-1})_*\mu$ on $G/K$.
It remains to prove that $\mu_{G/K} = \pi_*(\mathrm{Haar}_G)$. Given a topological compact Hausdorff space $Z$, we recall the bijective correspondence (see \cite[Chapter IX]{Lang93}):
\begin{alignat*}{2}
\Mcal^1(Z) &\to \{\phi \colon \Ccal(Z,\mathbb{R}) \to \mathbb{R}, \, \mathbb{R}\text{-linear, continuous, positive and with }\phi(1) = 1 \} \\
\mu &\mapsto  \begin{cases} \Ccal(Z,\mathbb{R}) \!\!\!\!&\to \mathbb{R} \\
\qquad f &\mapsto  \int_Z f(z) \mu(dz) .
\end{cases}
\end{alignat*}
To any topological compact Hausdorff group $Z$, we associate the probability Haar measure $\mathrm{Haar}_Z$, and we define
\begin{alignat*}{2}
T \colon \Ccal(G) &\to \Ccal(G/K) \\
f &\mapsto  Tf \colon\begin{cases} G/K \!\!\!\!&\to \mathbb{R} \\
 \,\,\,\, gK \!\!\!\!&\mapsto  \int_K f(gk) \, \mathrm{Haar}_K(dk)
\end{cases}
\end{alignat*}
We denote by $\Ccal(G)_+^*$ the space of positive continuous linear forms on the $\R$-vector space $\Ccal(G)$.
The transformation $T$ induces the contravariant transformation
\begin{align*}
T^* \colon \Ccal(G/K)_+^* &\to  \Ccal(G)_+^* \\
\nu &\mapsto \nu \circ T,
\end{align*}
and any group action $G \times A \to A$ induces the group action
\begin{align*}
G \times \Ccal(A) &\to \Ccal(A)
\\
(g,f) &\mapsto g \cdot f  = \begin{cases} A \!\!\!\!&\to \mathbb{R} \\
 x\!\!\!\! &\mapsto f(g^{-1} \cdot x).
\end{cases}
\end{align*}
Consider the probability measure $\mu_{G/K} $ as an element of $\Ccal(G/K)_+^*$; we have by definition that for any $g \in G$ and $p \in \Ccal(G/K) $, $\mu(g \cdot p) = \mu(p)$. If $q \in \Ccal(G)$, then we have
\begin{align*}
(\mu \circ T)( g \cdot q) = \mu (T(g \cdot q)) &= \mu \left( \begin{cases} G/K\!\!\!\! &\to \mathbb{R} \\
 \,\,\,\,\overline{l} \!\!\!\!&\mapsto  \int_K (g \cdot q) (lk) \, \mathrm{Haar}_K(dk)
\end{cases}\right)
\\
&= \mu \left( \begin{cases} G/K \!\!\!\! &\to \mathbb{R} \\
 \,\,\,\,\overline{l} \!\!\!\!&\mapsto  \int_K q (g^{-1}lk) \, \mathrm{Haar}_K(dk)
\end{cases}\right)
\\
&= \mu \left( g \cdot \begin{cases} G/K \!\!\!\!&\to \mathbb{R} \\
\,\,\,\, \overline{l} \!\!\!\!&\mapsto  \int_K q (lk) \, \mathrm{Haar}_K(dk)
\end{cases}\right)
\\
&= (\mu \circ T)(q).
\end{align*}
so $\mu \circ T = T^*(\mu)$ is the unique $G$-invariant positive normalised continuous linear form on $\Ccal(G)$.
Hence $T^*(\mu) = \mathrm{Haar}_G$, and we finally need to show that $\pi_* \circ T^* = \mathrm{Id}_{\Ccal(G/K)_+^*}$.
However, for any $ \overline{g} \in G/K $ and $f \in \Ccal(G/K)$, $T(f \circ \pi)(\overline{g}) = \int_K f \circ \pi (gk) \,\mathrm{Haar}_K(dk) = \int_K f(\overline{g})\, \mathrm{Haar}_K(dk)  = f(\overline{g})$; 
\PLM{the result follows by functoriality.}
\end{proof}
\medskip

\subsection{Study of the cumulants in the homogeneous case}
\PLM{We now perform the asymptotic analysis of the fluctuations of the observables $\Phi(\spa_n)$ when $\mathcal{X}=G/K$ is a compact homogeneous space. We start by proving an upper bound on the cumulants of $S_n(\varphi,\spa)$ which will be analogue to the one of the method of cumulants, but with different parameters $N_n$ and $D_n$, and with a non-Gaussian limiting distribution; see Theorem \ref{theo:bound_cumulant_homogeneous}.} Our arguments will involve spanning trees of graphs. We recall that a \emph{Cayley tree} of size $r$ is a labeled tree with vertex set $[\![1,r]\!]$; there are $r^{r-2}$ Cayley trees of size $r$. We start with the homogeneous analogue of Proposition \ref{prop:vanish generic}.

\begin{prop}\label{prop:vanish generic homogeneous}
If $\spa$ is a compact homogeneous space, then for $r\geq2$, $\pi \in \mathfrak{Q}(pr)$ and $\varphi \in \Ccal_b(\mathbb{R}^{\binom{p}{2}})$, if $\ell(\pi) > (p-1)r $, then $\kappa(\pi,\varphi) = 0$.
\end{prop}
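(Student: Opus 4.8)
The strategy is to mirror the proof of Proposition \ref{prop:vanish generic}, but to exploit the stronger vanishing of covariances available in the homogeneous case (Condition \eqref{eq:cov_vanish}, equivalently the constancy of $\nu$ from Theorem \ref{theo:homogeneous}). Recall that in the generic case, $\kappa(\pi,\varphi)$ vanishes as soon as the multigraph $H_\pi$ on $[\![1,r]\!]$ — obtained by taking a spanning forest $G_\pi$ of the parts of $\pi$ and contracting each block of $p$ consecutive vertices to a single vertex — is disconnected; and $H_\pi$ is disconnected whenever $\ell(\pi) > (p-1)r+1$, because then $|E(G_\pi)| = pr - \ell(\pi) < r - 1$. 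In the homogeneous case we want to push the threshold down by one: we must show that $\kappa(\pi,\varphi) = 0$ already when $\ell(\pi) > (p-1)r$, i.e. when $|E(G_\pi)| \leq r-1$, which is exactly the borderline case where $H_\pi$ is a \emph{tree} (a connected graph on $r$ vertices with $r-1$ edges). So the new content is: when $H_\pi$ is a spanning tree of $[\![1,r]\!]$, homogeneity forces $\kappa(\pi,\varphi)=0$.

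The key step is therefore a "tree reduction" argument for joint cumulants. When $H_\pi$ is a tree, pick a leaf, say vertex $a \in [\![1,r]\!]$, joined to its unique neighbour $b$ by a single edge of $H_\pi$; this edge comes from a single edge of $G_\pi$ linking one index position of $\ibar^a$ to one index position of $\ibar^b$, so the block $\ibar^a$ shares exactly one coordinate, say $X_{i_0}$, with the rest of the family, and that coordinate is shared only with $\ibar^b$. All other $p-1$ coordinates of $\ibar^a$ are "fresh" variables, independent of everything else and of each other (their positions are singletons of $\pi$ not touched by any edge of $G_\pi$, since $G_\pi$ is a forest with the minimal number of edges). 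Now I would combine two facts: (i) property (3) of Proposition \ref{prop:cumulants} is not enough by itself here (the family is connected), so instead I expand $\kappa(\varphi(d(X_{\ibar^1})),\ldots,\varphi(d(X_{\ibar^r})))$ using the moment–cumulant relation and isolate the role of the leaf variable $X_{i_0}$; and (ii) the constancy of $\nu$ says precisely that conditionally on $X_{i_0} = x$, the law of the vector of distances $d(x, \text{(the fresh variables of }\ibar^a))$ does not depend on $x$ — this is the statement that $\mathrm{Ed}_A$ is constant, proved during $(1)\Rightarrow(2)$ of Theorem \ref{theo:homogeneous}. Consequently $\esper[\varphi(d(X_{\ibar^a})) \mid X_{i_0}, \text{all other variables}]$ is a \emph{deterministic constant} $c$, independent of $X_{i_0}$ and hence independent of the whole rest of the family. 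Replacing the leaf factor by this constant: by multilinearity $\kappa(\ldots, \varphi(d(X_{\ibar^a})), \ldots) = \kappa(\ldots, c, \ldots) = 0$ for $r \geq 2$, since a joint cumulant involving a constant (deterministic) argument vanishes whenever there are at least two arguments.

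The main obstacle is making the conditioning argument clean at the level of cumulants rather than moments. The cleanest route: show that because the fresh coordinates of $\ibar^a$ are independent of everything and the shared coordinate $X_{i_0}$ contributes, via constancy of $\nu^{X_{i_0}}$, only a conditional expectation that is $\sigma(X_{i_0})$-measurable yet almost surely constant, the random variable $\widetilde A := \esper[\varphi(d(X_{\ibar^a})) \mid \mathcal{G}]$, where $\mathcal{G}$ is the $\sigma$-algebra generated by all variables other than the fresh ones of $\ibar^a$, equals a constant $c$ almost surely; then one checks that $\kappa(A_1,\ldots,A_{a-1},\varphi(d(X_{\ibar^a})),A_{a+1},\ldots,A_r)$ equals $\kappa(A_1,\ldots,A_{a-1},\widetilde A,A_{a+1},\ldots,A_r)$ because all $A_j$ with $j \neq a$ are $\mathcal{G}$-measurable (this is the standard "tower" property for joint cumulants: conditioning a single factor on a $\sigma$-algebra containing all the others leaves the joint cumulant unchanged), and the latter is $\kappa(\ldots,c,\ldots) = 0$. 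Summing over the finitely many $\pi$ with $\ell(\pi) > (p-1)r$ — or rather, arguing for each such $\pi$ individually via its tree $H_\pi$ and a leaf thereof — gives the claim. One should also handle the case $H_\pi$ disconnected (which forces $|E(G_\pi)| < r-1$, i.e. $\ell(\pi) > (p-1)r+1$) exactly as in Proposition \ref{prop:vanish generic}; so only the genuinely new case $\ell(\pi) = (p-1)r + ?$ with $H_\pi$ a tree needs the homogeneity input.
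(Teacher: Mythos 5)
Your proof is correct and follows essentially the same route as the paper: reduce to the case where the contracted multigraph $H_\pi$ is a Cayley tree (the disconnected case being handled as in Proposition \ref{prop:vanish generic}), pick a leaf block, observe that $p-1$ of its positions are singletons of $\pi$, and use homogeneity (constancy of $\nu$) to show that this block contributes only a constant, which forces the joint cumulant to vanish. The only difference is in the implementation of the last step --- you replace the leaf factor by its conditional expectation given the $\sigma$-algebra of the remaining variables and invoke the tower property for joint cumulants, whereas the paper integrates out everything except the shared variable in the joint Laplace transform and uses a measure-preserving isometry to show that the resulting function is constant so that the transform factorizes; both are valid, and your minor claim that the shared coordinate is shared ``only with $\ibar^b$'' is not needed (and not always true, since that coordinate may be identified with positions in several other blocks), as your argument only uses that the other $p-1$ coordinates of the leaf block are fresh.
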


\begin{proof}
We consider the same trees $(T_A)_{A \in \pi}$, the same graph $G_\pi$ and the same multigraph $H_\pi$ as in the proof of Proposition~\ref{prop:vanish generic}. We have $\sum_{A \in \pi} (|E(T_A)|+1) = pr$. This implies $|E(H_\pi)|=|E(G_\pi)|=\sum_{A \in \pi} |E(T_A)| \leq r-1$ by the assumption on $\ell(\pi)$. If $H_\pi$ is not connected, then the same argument as in Proposition~\ref{prop:vanish generic} gives $\kappa(\pi,\varphi)=0$. Therefore, the only remaining case to treat is when $H_\pi$ is connected and has exactly $r-1$ edges; it is then a Cayley tree. Fix $I=(\ibar^1,\ldots,\ibar^r)$ such that $\mathrm{Sp}_n(I)=\pi$, and an index $k \in [\![1,r]\!]$ which is a leaf of the graph $H_\pi$. By definition of the multigraph $H_\pi$, the block of indices $\ibar^k$ shares exactly one index with all the other blocks $\ibar^{j \neq k}$:
$$\mathrm{Card}\left(\overline{\mathrm{Im}}(\ibar^k)\cap \bigcup_{j\neq k} \overline{\mathrm{Im}}(\ibar^j)\right) =1. $$
To fix the ideas, let us assume that $k=1$ and that the shared index in $\ibar^1=(\ibar^1_1,\ldots,\ibar^1_p)$ is the first one. To compute the cumulant, we consider
\begin{align*}
\esper\left[\E^{t_1 \varphi(d(X_{\ibar^1}))+\cdots+ t_r \varphi(d(X_{\ibar^r}))}\right] = \int_{\mathcal{X}}\left(\int_{\mathcal{X}^{(p-1)r}} \E^{t_1 \varphi(d(x_{\ibar^1}))+\cdots+ t_r \varphi(d(x_{\ibar^r}))}\, \mu^{\otimes (p-1)r}((x_i)_{i \neq \ibar^1_1})\right)\mu(dx_{\ibar^1_1}).
\end{align*}
Denote $F(x_{\ibar^1_1})$ the integral where one has integrated all the variables except $x_{\ibar^1_1}$. If $x_0$ is an arbitrary point in $\mathcal{X}$, then for any $x_{\ibar^1_1}$, we have an isometry $\psi \in\mathrm{Isomp}(\spa)$ such that $\psi(x_{\ibar^1_1}) = x_0$, because $\spa$ is homogeneous.
So, denoting $y_a = \psi(x_a)$, we get 
\begin{align*}
F(x_{\ibar^1_1})&=\int_{\mathcal{X}^{(p-1)r}} \E^{t_1 \varphi(d(x_{\ibar^1}))+\cdots+ t_r \varphi(d(x_{\ibar^r}))}\, \mu^{\otimes (p-1)r}((x_i)_{i \neq \ibar^1_1}) \\
&= \int_{\mathcal{X}^{(p-1)r}} \E^{t_1 \varphi(d(y_{\ibar^1})))+\cdots+ t_r \varphi(d(y_{\ibar^r})))}\, \mu^{\otimes (p-1)r}((y_i)_{i \neq \ibar^1_1})\\ 
&= F(x_0) 
\end{align*}
and the integral does not depend on $x_{\ibar^1_1}$.
So, we have
\begin{align*}
\esper\left[\E^{t_1 \varphi(d(X_{\ibar^1}))+\cdots+ t_r \varphi(d(X_{\ibar^r}))}\right] = F(x_0) &= \int_{\mathcal{X}} F(x_0)\,\mu(dx_0)\\
&= \int_{\mathcal{X}^{(p-1)r+1}} \E^{t_1 \varphi(d(x_{\jbar^1}))+\cdots+ t_r \varphi(d(x_{\jbar^r}))}\, \mu^{\otimes (p-1)r+1}((x_j)_{j \in \overline{\mathrm{Im}}(\jbar)})
\end{align*}
where $J=(\jbar^1,\ldots,\jbar^r)$ is the same collection of indices as $I$, except that we have replaced $\ibar^1_1$ by a new index different from all the other indices. In this new collection, $\jbar^1$ does not share any index with the other families $\jbar^{k\geq 2}$, so $X_{\jbar^1}$ is independent from the other variables, and
$$\esper\left[\E^{t_1 \varphi(d(X_{\ibar^1}))+\cdots+ t_r \varphi(d(X_{\ibar^r}))}\right]= \esper\left[\E^{t_1 \varphi(d(X_{\ibar^1}))}\right] \esper\left[\E^{t_2\varphi(d(X_{\ibar^2}))+\cdots+ t_r \varphi(d(X_{\ibar^r}))}\right].$$
Looking at the coefficient of $[t_1\cdots t_r]$ in the logarithm of the Laplace transform, we conclude that the joint cumulant vanishes.
\end{proof}

\PLM{\begin{remark}
The proof of this proposition leads to a slightly stronger result: if $\pi \in \mathfrak{Q}(pr)$ is a set partition such that $H_\pi$ is disconnected or is a tree, or even is a connected graph with one vertex of valence $1$, then the corresponding cumulant vanishes. For instance, with $r=2$ and $p=6$, the following set partition
\vspace{2mm}
\begin{center}
\begin{tikzpicture}[scale=0.5]
\foreach \x in {0,1,2,3,4,5}
{\fill (\x,0) circle (3pt);
\fill (\x,1) circle (3pt);}
\fill[gray] (1,0) -- (0,0) -- (0,1) -- (1,0);
\draw (1,0) -- (0,0) -- (0,1) -- (1,0);
\end{tikzpicture}
\end{center}
\vspace{2mm}

\noindent which identifies one index of the first block of indices $\ibar^1$ with two distinct indices of the second block $\ibar^2$ satisfies $\ell(\pi) = 10 = (p-1)r$, but the corresponding graph $H_\pi$ is the unique Cayley tree on $2$ vertices, so $\kappa(\pi,\varphi)=0$ for any function $\varphi \in \Ccal(\R^{\binom{p}{2}})$. The most general condition which leads to the vanishing of the joint cumulant $\kappa(\pi,\varphi)=0$ is the following: if there exists an integer $k \in [\![1,r]\!]$ such that, among the integers $(k-1)p+1,\ldots,kp$, the set partition $\pi \in \mathfrak{Q}(pr)$ contains $p-1$ singletons (and the remaining integer of this block which can be connected to many other integers in the other blocks), then $\kappa(\pi,\varphi)=0$. Indeed, we can then use the same trick as above to replace in the computation of the joint Laplace transform the family $\ibar^k$ by a family of indices $\jbar^k$ which are all distinct and which are not shared by the other families $\ibar^{a \neq k}$. We call such a set partition $\pi$ \emph{homogeneously vanishing}.
\end{remark}}
\medskip

In the homogeneous case, the variance $ \var(S_{n}(\varphi,\spa))$ is a polynomial function of degree smaller than $2(p-1)$. We have
\begin{align*}
\kappa^{(2)}(S_n(\varphi,\spa)) = \sum\limits_{\substack{\pi \in \mathfrak{Q}(2p) \\ \ell(\pi) \leq 2(p-1)}}\kappa(\pi,\varphi) \,n^{\downarrow \ell(\pi)}. 
\end{align*}
By using the previous remark, we can identify the set partitions with $\ell(\pi)=2(p-1)$ and $\kappa(\pi,\varphi) \neq 0$. For $1 \leq k_1,l_1,k_2,l_2 \leq p$ with $k_1 \neq k_2, l_1 \neq l_2$, we define the set partition
\begin{align*}
\pi_{k_1,l_1,k_2,l_2} = \{\{k_1,l_1\},\{k_2,l_2\},\{t\} \cup \{ \{t\} \, ; \, t \in [\![1,2p]\!] \setminus \{k_1,k_2,l_1+p,l_2+p\} \}.
\end{align*}
Then we have  the following equality (the bracket is the extraction of the coefficient of degree $n^{2(p-1)}$ in the polynomial in the variable $n$): 
\begin{equation}
\kappa^{(2)}(S_n(\varphi,\spa))[n^{2(p-1)}] = \sum\limits_{\substack{1 \leq k_1,l_1,k_2,l_2 \leq p \\ k_1 <k_2,\,\, l_1 \neq l_2}} \kappa(\pi_{k_1,l_1,k_2,l_2},\varphi) \vcentcolon= \sigma_{\text{hom}}^2.\label{eq:sigma_hom}
\end{equation}
\begin{prop}\label{prop:limit_cumulant}
Suppose that $\sigma_{\text{hom}}^2 > 0$. If $Y_n(\varphi,\spa) = \frac{\Phi(\spa_n) - \Phi(\spa)}{\sqrt{\var(\Phi(\spa_n))}} $, then we have convergence of all the cumulants of these variables: for any $r \geq 1$, there exists $a_r \in \R$ such that
$$\kappa^{(r)}(Y_n(\varphi,\spa)) {\longrightarrow}_{n \to +\infty } a_r .$$
\end{prop}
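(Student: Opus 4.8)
The plan is to establish the convergence of each cumulant $\kappa^{(r)}(Y_n(\varphi,\spa))$ by working with the unnormalised variable $S_n(\varphi,\spa) = n^p\,\Phi(\spa_n)$ and tracking the leading coefficient of the polynomial $n \mapsto \kappa^{(r)}(S_n(\varphi,\spa))$. First I would recall from Proposition~\ref{prop:polynomiality} and Equation~\eqref{eq:expansion_cumulant} that $\kappa^{(r)}(S_n(\varphi,\spa)) = \sum_{\pi \in \mathfrak{Q}(pr)} \kappa(\pi,\varphi)\,n^{\downarrow \ell(\pi)}$, and then use Proposition~\ref{prop:vanish generic homogeneous} (together with the stronger ``homogeneously vanishing'' criterion in the remark that follows it) to see that in the homogeneous case the only set partitions $\pi$ contributing to the top degree are those for which $H_\pi$ is a connected graph with minimal degree at least $2$ and with exactly $\ell(\pi)$ as large as possible; a counting of edges shows that the relevant $\pi$'s satisfy $\ell(\pi) = r(p-1)$, so that $\kappa^{(r)}(S_n(\varphi,\spa))$ is a polynomial of degree at most $r(p-1)$ in $n$ with leading coefficient some explicit constant $b_r(\varphi,\spa) := \sum_{\pi}\kappa(\pi,\varphi)$, the sum being over those extremal $\pi$.

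Next I would relate this to $Y_n$. We have $\Phi(\spa_n)-\Phi(\spa) = n^{-p}(S_n(\varphi,\spa) - \esper[S_n(\varphi,\spa)])$, so by homogeneity of the cumulant of order $r \geq 2$ under scaling,
\begin{align*}
\kappa^{(r)}(Y_n(\varphi,\spa)) = \frac{\kappa^{(r)}(\Phi(\spa_n))}{(\var(\Phi(\spa_n)))^{r/2}} = \frac{n^{-pr}\,\kappa^{(r)}(S_n(\varphi,\spa))}{(n^{-2p}\,\kappa^{(2)}(S_n(\varphi,\spa)))^{r/2}} = \frac{\kappa^{(r)}(S_n(\varphi,\spa))}{(\kappa^{(2)}(S_n(\varphi,\spa)))^{r/2}}.
\end{align*}
By the previous paragraph the numerator is a polynomial in $n$ of degree at most $r(p-1)$, and by Equation~\eqref{eq:sigma_hom} the denominator is $(\kappa^{(2)}(S_n(\varphi,\spa)))^{r/2}$ with $\kappa^{(2)}(S_n(\varphi,\spa)) = \sigma_{\mathrm{hom}}^2\,n^{2(p-1)}(1+o(1))$ and $\sigma_{\mathrm{hom}}^2 > 0$ by hypothesis; hence the denominator grows like $(\sigma_{\mathrm{hom}}^2)^{r/2}\,n^{r(p-1)}$. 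Dividing, the ratio converges to
\begin{align*}
a_r = \frac{b_r(\varphi,\spa)}{(\sigma_{\mathrm{hom}}^2)^{r/2}},
\end{align*}
which is the desired limit; for $r=2$ this gives $a_2 = 1$, and for $r=1$ the statement is trivial since $Y_n$ is centred.

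The main obstacle I anticipate is the combinatorial bookkeeping needed to verify that $\kappa^{(r)}(S_n(\varphi,\spa))$ genuinely has degree at most $r(p-1)$ — i.e. that for every $\pi$ with $\ell(\pi) > r(p-1)$ one has $\kappa(\pi,\varphi) = 0$ — which is exactly Proposition~\ref{prop:vanish generic homogeneous}, so this is already available; and more delicately, that the top-degree coefficient $b_r(\varphi,\spa)$ is a genuine constant independent of $n$ obtained by summing $\kappa(\pi,\varphi)$ over a well-defined finite family of set partitions. One should double-check that there are no cancellations forcing the effective degree below $r(p-1)$ in a way that would change the normalisation — but even if $b_r(\varphi,\spa) = 0$ for some $r$, the statement still holds with $a_r = 0$, so the argument is robust. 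The only real content beyond the already-proved vanishing lemmas is the scaling identity for cumulants and the extraction of leading terms, both routine.
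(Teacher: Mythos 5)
Your proposal is correct and follows essentially the same route as the paper: both reduce $\kappa^{(r)}(Y_n)$ to the ratio $\kappa^{(r)}(S_n)/(\kappa^{(2)}(S_n))^{r/2}$, invoke Proposition~\ref{prop:polynomiality} and Proposition~\ref{prop:vanish generic homogeneous} to bound the degree of the numerator by $(p-1)r$, and use $\sigma_{\text{hom}}^2>0$ to extract the limit of the ratio of leading terms. The only (harmless) extra content in your write-up is the attempted identification of exactly which set partitions carry the top-degree coefficient, which is not needed for the convergence statement itself.
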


\begin{proof}
For $r=1$, $\kappa^{(r)}(Y_n(\varphi,\spa)) = 0$ and for $ r\geq 2$
\begin{align*}
\kappa^{(r)}(Y_n(\varphi,\spa)) &= \kappa^{(r)}\left(\frac{\Phi(\spa_n) - \Phi(\spa)}{\sqrt{\var(\Phi(\spa_n))}}\right) \\
&= \kappa^{(r)}\left(\frac{S_{n}(\varphi,\spa) - \esper[S_{n}(\varphi,\spa)]}{\sqrt{\var(S_{n}(\varphi,\spa))}}\right) = \frac{\kappa^{(r)}(S_{n}(\varphi,\spa))}{\left( \var(S_{n}(\varphi,\spa))\right)^{r/2}} \,.
\end{align*}
We know that for each $r \geq 2$, $\kappa^{(r)}(S_{n}(\varphi,\spa))$ is a polynomial function of degree less than $(p-1)r$, according to Propositions~\ref{prop:polynomiality} and \ref{prop:vanish generic homogeneous}. We can write $\kappa^{(r)}(S_{n}(\varphi,\spa)) = V(n) = \sum_{i=0}^{(p-1)r} v_i n^i$ and $\var(S_{n}(\varphi,\spa)) = \kappa^{(2)}(S_{n}(\varphi,\spa)) = W(n) = \sum_{i=0}^{2(p-1)} w_i n^i$; the assumption $\sigma_{\text{hom}}^2>0$ amounts to $w_{2(p-1)} > 0$. So we have
\begin{align*}
\lim\limits_{n \to +\infty} \kappa^{(r)}(Y_n(\varphi,\spa)) &= \lim\limits_{n \to +\infty} \frac{v_{(p-1)r}\, n^{(p-1)r}}{ \left(w_{2(p-1)}\, n^{2(p-1)}\right)^{r/2}} = \frac{v_{(p-1)r}}{(w_{2(p-1)})^{r/2}} = a_r.\qedhere
\end{align*}
\end{proof}
\medskip

\PLM{The following theorem ensures that the $a_r$'s are not too large, so that we can sum them and obtain the Laplace transform of a limiting distribution of $Y_n(\varphi,\spa)$.}

\begin{theo}\label{theo:bound_cumulant_homogeneous}
In the case where $\spa$ is a compact homogeneous space, we have for any $\varphi \in \Ccal(\R^{\binom{p}{2}})$ and any $r\geq 2$ the upper bound 
$$|\kappa^{(r)}(S_n(\varphi,\spa))| \leq (Ap^2)^r \,(2r)^{r-1} \, n^{(p-1)r}$$
with $A=\|\varphi\|_\infty$.
\end{theo}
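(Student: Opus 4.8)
The plan is to rerun the proof of Theorem~\ref{theo:bounds_dependency}, keeping track of the extra cancellation provided by homogeneity. Recall that by multilinearity
$$\kappa^{(r)}(S_n(\varphi,\spa)) = \sum_{\mathbf{i} = (\ibar^1,\dots,\ibar^r) \in ([\![1,n]\!]^p)^r} \kappa\big(\varphi(d(X_{\ibar^1})),\dots,\varphi(d(X_{\ibar^r}))\big),$$
and that the joint cumulant attached to a tuple $\mathbf{i}$ depends only on the set partition $\pi = \mathrm{Sp}_n(\mathbf{i}) \in \mathfrak{Q}(pr)$, through the quantity $\kappa(\pi,\varphi)$ of Equation~\eqref{eq:kappa_pi}; by Proposition~\ref{prop:cumulants}(3) it vanishes unless the multigraph $H_\pi$ on $[\![1,r]\!]$ (built in the proof of Proposition~\ref{prop:vanish generic}) is connected. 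The argument of Theorem~\ref{theo:bounds_dependency} bounds each nonzero cumulant by $2^{r-1}A^r$ times a count of spanning trees of $H_\pi$ (here $A=\|\varphi\|_\infty$), so that, grouping the tuples along a spanning tree $T\subseteq H_{\mathrm{Sp}_n(\mathbf{i})}$ of $[\![1,r]\!]$,
$$|\kappa^{(r)}(S_n(\varphi,\spa))| \le 2^{r-1}\,A^r \sum_{T}\ \#\{\mathbf{i} : T \subseteq H_{\mathrm{Sp}_n(\mathbf{i})}\},$$
the outer sum running over the $r^{r-2}$ labelled trees on $[\![1,r]\!]$; bounding the number of tuples compatible with a fixed rooted tree $T$ by $n^p$ (choice of the root block) times $(p^2 n^{p-1})^{r-1}$ (at each tree edge, choose which coordinate of the child block is identified with which coordinate of its parent, the common value being then forced, and the remaining $p-1$ coordinates freely) recovers the generic bound, of order $n^{(p-1)r+1}$.

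Homogeneity enters through Proposition~\ref{prop:vanish generic homogeneous}: when $\spa$ is compact homogeneous, $\kappa(\pi,\varphi) = 0$ unless $\ell(\pi) \le (p-1)r$, equivalently unless $H_\pi$ has at least $r$ edges. A contributing tuple thus has $H_\pi$ connected with a cycle, hence a spanning tree $T$ together with a supernumerary edge $e$. We enumerate such tuples by recording $T$ and $e$ ($\le r^{r-2}\cdot rp^2$ choices, say) and traversing $T$ from the root in pre-order; both endpoints of $e$ are then placed at distinct steps, and at the step placing the later endpoint that block has a \emph{second} forced coordinate (one from its tree parent, one from the other endpoint of $e$), so it offers only $O(p^4 n^{p-2})$ choices instead of $O(p^2 n^{p-1})$. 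The exponent of $n$ therefore collapses from $p+(p-1)(r-1)$ to $2(p-1)+(p-1)(r-2) = (p-1)r$; collecting the $r^{r-2}$ spanning trees, the factor $\le rp^2$ for the location of the supernumerary coincidence, the powers of $p^2$ from the $r-1$ steps, and the $2^{r-1}A^r$ of the per-cumulant bound, and absorbing everything into $(Ap^2)^r(2r)^{r-1}$, yields the claimed inequality $|\kappa^{(r)}(S_n(\varphi,\spa))| \le (Ap^2)^r(2r)^{r-1}n^{(p-1)r}$. (Equivalently, in the homogeneous case $S_n(\varphi,\spa)$ obeys a bound of the method-of-cumulants shape with the reduced parameters $N_n \sim D_n \sim p^2 n^{p-1}$; the remark following Proposition~\ref{prop:vanish generic homogeneous}, on homogeneously vanishing partitions, gives further savings but is not needed here.)

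I expect the main obstacle to be the constant bookkeeping in the second step: one must arrange the enumeration order so that the supernumerary coincidence costs only a factor of order $rp^2$, and one must treat the degenerate configurations separately — the supernumerary edge incident to the root block, the extra coincidence realised inside a single block, or the extra coincidence falling on a coordinate already used by a tree edge (a triple coincidence, which must be counted on its own) — each of which still removes a power of $n$ but reshuffles the polynomial factors in $p$ and $r$, to be checked to remain within $(Ap^2)^r(2r)^{r-1}$. The other point to pin down is the exact intermediate estimate $|\kappa(\pi,\varphi)| \le 2^{r-1}A^r\,\#\mathrm{ST}(H_\pi)$ extracted from the proof of Theorem~\ref{theo:bounds_dependency}; alternatively one can bypass it by starting from the identity $\kappa^{(r)}(S_n(\varphi,\spa)) = \sum_{\pi}\kappa(\pi,\varphi)\,n^{\downarrow \ell(\pi)}$ of Equation~\eqref{eq:expansion_cumulant}, using $n^{\downarrow \ell(\pi)} \le n^{(p-1)r}$ for every contributing $\pi$ by Proposition~\ref{prop:vanish generic homogeneous}, and then controlling $\sum_{\pi\text{ contributing}}|\kappa(\pi,\varphi)|$ by the same spanning-tree enumeration, now with one fewer power of $n$ to produce.
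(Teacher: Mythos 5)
Your strategy is the paper's own: expand $\kappa^{(r)}(S_n(\varphi,\spa))$ by multilinearity, bound each joint cumulant by $2^{r-1}A^r$ times the number of spanning trees of $H_\pi$ as in the proof of Theorem~\ref{theo:bounds_dependency}, and use homogeneity to force one coincidence beyond a spanning tree, which removes one power of $n$. Organized this way, your enumeration does establish the correct order $n^{(p-1)r}$ with a constant of the form $C^r r^r$, which is all that Theorem~\ref{theo:singular_case} and Proposition~\ref{prop:chernoff} actually require downstream.

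There is, however, a genuine discrepancy with the \emph{stated} constant, and it sits exactly at the point you flag as the main obstacle and then set aside. You localize the supernumerary coincidence as an arbitrary extra edge $e$ of $H_\pi$; since $e$ joins an arbitrary (unordered) pair of blocks, or is a loop, its position costs a factor of order $r^2$, not the $rp^2$ you allow, and the total becomes of order $A^r p^{2r}\,2^{r-1}r^{r-2}\cdot r^2\, n^{(p-1)r}$, which exceeds $(Ap^2)^r(2r)^{r-1}n^{(p-1)r}=A^rp^{2r}2^{r-1}r^{r-1}n^{(p-1)r}$ by a factor of order $r$. The paper closes this gap with precisely the refinement you declare ``not needed'': the remark following Proposition~\ref{prop:vanish generic homogeneous} shows that a contributing partition is not merely one with $\ell(\pi)\leq (p-1)r$, but one in which \emph{every} block $\ibar^k$ has at least two of its $p$ indices lying in non-singleton parts of $\pi$. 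This per-block criterion lets one designate in advance a leaf $k\neq 1$ of the Cayley tree $T$, postpone it to the last step of the traversal, and know for certain that $\ibar^k$ itself carries the extra coincidence; the only remaining freedom is which other block $b$ (possibly $b$ equal to the tree-parent, or an internal repetition) it hits, costing $(r-1)$ rather than $O(r^2)$, whence $p^4(r-1)n^{p-2}$ choices for $\ibar^k$ and exactly the claimed bound. This device also absorbs uniformly the degenerate configurations you list (loop at a block, triple coincidence), since they are all instances of ``the designated leaf has a second non-singleton index.'' So either strengthen your vanishing criterion to the per-block one before counting, or accept a bound weaker by a factor $O(r)$ — which changes nothing in the applications but is not the inequality as stated.
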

\begin{proof}
We are going to adapt the proof of the upper bound~\eqref{bound:cumulant} which can be found
in \cite[Chapter 9]{feray2016mod}. We expand by multilinearity the cumulant and we start by controlling each term of the following sum: 
\begin{align*}
\kappa^{(r)}(S_n(\varphi,\spa)) = \sum_{(\ibar^1,\dots,\ibar^r) \in V^r} \kappa\left(\varphi(d(X_{\ibar^1})),\dots,\varphi(d(X_{\ibar^r}))\right),
\end{align*}
with $V=[\![1,n]\!]^p$. With $A = \| \varphi\|_{\infty} $, Equation (9.9) in \cite{feray2016mod}  gives 
\begin{align*}
|\kappa\left(\varphi(d(X_{\ibar^1})),\dots,\varphi(d(X_{\ibar^r}))\right)| \leq A^r 2^{r-1}\, \mathrm{ST}(H_\pi),
\end{align*}
where $\pi = \mathrm{Sp}_n(\ibar^1,\ldots,\ibar^r)$ and $\mathrm{ST}(H_\pi)$ is the number of spanning trees of the multigraph $H_\pi$.
Now, we have identified in a previous remark the cumulants $\kappa(\pi,\varphi)$ which vanish in the homogeneous case, so we can add this condition to the upper bound. Thereby, we have
\begin{align*}
|\kappa(X_{\ibar^1},\dots,X_{\ibar^r})| \leq A^r 2^{r-1}\, \mathrm{ST}(H_\pi)\, \mathbbm{1}_{\mathrm{NHV}(\pi)},
\end{align*}
where $\mathrm{NHV}(\pi)$ is the condition "$\pi$ is not homogeneously vanishing".
Summing over $V^r$, we get by using the triangle inequality
\begin{align*}
|\kappa^{(r)}(S_{n}(\varphi,\spa))| &\leq A^r 2^{r-1} \sum_{\ibar^1 \in V} \left[ \sum_{(\ibar^2,\dots,\ibar^r) \in V^{r-1}}\mathrm{ST}(H_{\mathrm{Sp}_n(I)}) \,\mathbbm{1}_{\mathrm{NHV}(\mathrm{Sp}_n(I))} \right]
\\
&\leq A^r 2^{r-1}  \sum_{\ibar^1 \in V} \left[\sum_{T\text{ Cayley tree of size }r} \sum_{(\ibar^2,\dots,\ibar^r) \in V^{r-1}}\mathbbm{1}_{T \subset H_{\mathrm{Sp}_n(I)}} \,\mathbbm{1}_{\mathrm{NHV}(\mathrm{Sp}_n(I))} \right].
\end{align*}
Now, we can bound the expression in the bracket by adapting the Lemma 9.3.5 in \cite{feray2016mod} to the homogeneous case. Indeed, let us fix a Cayley tree $T$ of size $r$ and an element $\ibar^1 \in V$. The lists $(\ibar^2,\dots,\ibar^r)$ which have a non-zero contribution in the sum 
$$\sum_{(\ibar^2,\dots,\ibar^r) \in V^{r-1}}\mathbbm{1}_{T \subset H_{\mathrm{Sp}_n(I)}} \,\mathbbm{1}_{\mathrm{NHV}(\mathrm{Sp}_n(I))}$$
 are constructed as follows. We fix a vertex $k\neq 1$ of degree one (a leaf) in $T$, and we shall choose $\ibar^k$ at the end. Before that:
 \begin{itemize}
  	\item We start by choosing the $\ibar^j$'s with $j$ neighbour of $1$ in $T$ and $j \neq k$. For each such family, $\ibar^1$ and $\ibar^j$ share at least one index, so the number of possibilities for $\ibar^j$ is smaller than $D_n=p^2\,n^{p-1}$.
  	\item We pursue the construction with the neighbours of the neighbours of $1$, and so on but leaving always on the side the vertex $k$. Each time, there are at most $p^2\,n^{p-1}$ possibilities for $\ibar^j$. Moreover, as $k$ is a leaf of $T$, our inductive construction enumerates all the vertices in $[\![1,r]\!]$ but $k$.
  \end{itemize} 
  We therefore have less than $(p^2\,n^{p-1})^{r-2}$ possibilities for $(\ibar^2,\dots,\ibar^r)\setminus \{\ibar^k\}$. We finally choose $\ibar^k$, using now the fact that if the list $(\ibar^2,\dots,\ibar^r)$ yields a non-zero contribution, then $\pi$ is not homogeneously vanishing and $\ibar^k$ must share at least two \emph{distinct} indices with other families $\ibar^a$ and $\ibar^b$ (we may have $a=b$). Consequently, there are less than 
  $$p^4\,(r-1)\,n^{p-2}$$ possible values for $\ibar^k \in V$: one family $\ibar^a$ is obtained by taking the unique neighbour $a$ of $k$ in $T$, there are $(r-1)$ possibilities for the other family $\ibar^b$, then $p^4$ possibilities for the choices of positions of indices that are shared, and $n^{p-2}$ possibilities for the other indices in the family $\ibar^k$. So,
$$\sum_{(\ibar^2,\dots,\ibar^r) \in V^{r-1}}\mathbbm{1}_{T \subset H_{\mathrm{Sp}_n(I)}} \,\mathbbm{1}_{\mathrm{NHV}(\mathrm{Sp}_n(I))} \leq (p^2\,n^{p-1})^{r-2}\,p^4\,(r-1)\,n^{p-2}\leq p^{2r}\,r\,n^{pr-p-r}.$$
  As there are $r^{r-2}$ Cayley trees of size $r$, and $n^p$ possibilities for $\ibar^1$, we finally get the upper bound
\begin{equation*}
|\kappa^{(r)}(S_{n}(\varphi,\spa))| \leq A^r\, 2^{r-1} \,r^{r-1}\,p^{2r}\, n^{(p-1)r}.\qedhere
\end{equation*}
\end{proof}
\medskip

\subsection{Central limit theorem for the homogeneous case}
We can finally prove the analogue of Theorem \ref{theo:generic_case} when $\spa$ is a compact homogeneous space.
\begin{theo}[Fluctuations in the homogeneous case]\label{theo:singular_case}
Let $\spa$ be a compact homogeneous space, $\varphi \in \Ccal(\R^{\binom{p}{2}})$ and $\Phi=\Phi^{p,\varphi}$. Suppose that $\sigma_{\text{hom}}^2(\varphi,\spa) = \lim_{n \to \infty}\frac{\var(S_n(\varphi,\spa))}{n^{2(p-1)}} > 0$; a combinatorial expansion of $\sigma_{\text{hom}}^2(\varphi,\spa)$ is provided by Equation \eqref{eq:sigma_hom}. Then, the sequence 
$$Y_n(\varphi,\spa) = \frac{\Phi(\spa_n) - \Phi(\spa)}{\sqrt{\var(\Phi(\spa_n))}}$$ converges in distribution toward a real-valued random variable $Y(\varphi,\spa)$ having for cumulants the sequence 
$$\kappa^{(r)}(Y(\varphi,\spa)) = a_r = \frac{1}{(\sigma_{\text{hom}})^{\frac{r}{2}}}\,\sum_{\substack{\pi \in \mathfrak{Q}(pr) \\ \ell(\pi) = (p-1)r \\ \pi \text{ non homogeneously vanishing}}} \kappa(\pi,\varphi),$$ 
where $\kappa(\pi,\varphi)$ is defined by Equation \eqref{eq:kappa_pi}.
The law of the limit $Y(\varphi,\spa)$ is determined by these cumulants $(a_r)_{r \geq 1}$. Under the assumption $\sigma_{\text{hom}}(\varphi,\spa)>0$, the renormalisation $\sqrt{\var(\Phi(\spa_n))}$ is of order $n^{-1}$, and more precisely, $\var(\Phi(\spa_n))$ is a polynomial in $n^{-1}$ without constant term and without term $\alpha\, n^{-1}$; its leading term is $\sigma_{\text{hom}}^2(\varphi,\spa)\,n^{-2}$.

\end{theo}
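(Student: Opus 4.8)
\emph{Proof strategy.} The two facts already available are Proposition~\ref{prop:limit_cumulant}, which gives the pointwise convergence $\kappa^{(r)}(Y_n(\varphi,\spa)) \to a_r$ for every fixed $r \geq 1$ together with the closed formula for $a_r$, and Theorem~\ref{theo:bound_cumulant_homogeneous}, which bounds the cumulants of the unnormalised sums $S_n(\varphi,\spa)$. The plan is: (i) transfer the estimate of Theorem~\ref{theo:bound_cumulant_homogeneous} to the renormalised variables $Y_n$; (ii) deduce a uniform growth bound on the limiting cumulants $(a_r)_{r \geq 1}$; (iii) run a moment-problem argument to obtain convergence in law to a distribution which is then pinned down by these cumulants; (iv) read off the variance asymptotics from the polynomiality results of Section~\ref{sec:generic_fluctuations}.

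For (i)--(ii): by multilinearity and homogeneity of the cumulant, $\kappa^{(r)}(Y_n(\varphi,\spa)) = \kappa^{(r)}(S_n(\varphi,\spa))\,/\,\var(S_n(\varphi,\spa))^{r/2}$ for $r \geq 2$ and $\kappa^{(1)}(Y_n) = 0$; here $Y_n$ is well defined for $n$ large since $\var(S_n(\varphi,\spa)) \sim \sigma_{\text{hom}}^2(\varphi,\spa)\,n^{2(p-1)} > 0$ by Equation~\eqref{eq:sigma_hom}. Writing $\var(S_n(\varphi,\spa)) = \sigma_n^2\,n^{2(p-1)}$ with $\sigma_n^2 \to \sigma_{\text{hom}}^2 > 0$, Theorem~\ref{theo:bound_cumulant_homogeneous} gives, as soon as $\sigma_n^2 \geq \tfrac12\sigma_{\text{hom}}^2$,
\[
\bigl|\kappa^{(r)}(Y_n(\varphi,\spa))\bigr| \;\leq\; \frac{(Ap^2)^r\,(2r)^{r-1}\,n^{(p-1)r}}{\bigl(\tfrac12\sigma_{\text{hom}}^2\bigr)^{r/2}\,n^{(p-1)r}} \;=\; B^r\,(2r)^{r-1}, \qquad B \vcentcolon= \frac{\sqrt2\,A\,p^2}{\sigma_{\text{hom}}},
\]
with $A = \|\varphi\|_\infty$; letting $n \to \infty$ and using Proposition~\ref{prop:limit_cumulant}, the limits obey $|a_r| \leq B^r(2r)^{r-1}$ as well, with $a_1 = 0$ and $a_2 = 1$.

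For (iii): each $\esper[Y_n^m]$ is finite (the $X_i$ lie in $\mathscr{A}$) and, by the moment--cumulant inversion of Proposition~\ref{prop:cumulants}, equals a fixed universal polynomial $\esper[Y_n^m] = \sum_{\pi \in \mathfrak{Q}(m)}\prod_{C \in \pi}\kappa^{(|C|)}(Y_n)$ in the first $m$ cumulants of $Y_n$; hence $\esper[Y_n^m] \to m_m \vcentcolon= \sum_{\pi \in \mathfrak{Q}(m)}\prod_{C \in \pi}a_{|C|}$ for every $m$. Writing $|a_k| \leq B^k(2k)^{k-1} = \tfrac12(2B)^k k^{k-1}$ and using that $k^{k-1}$ counts rooted labelled trees on $k$ vertices, the sum $\sum_{\pi \in \mathfrak{Q}(m)}\prod_{C \in \pi}|C|^{|C|-1}$ counts rooted labelled forests on $[\![1,m]\!]$ and therefore equals $(m+1)^{m-1}$ by the generalised Cayley formula; since the extra factors $\tfrac12$ per block only help, this yields $|\esper[Y_n^m]|,\,|m_m| \leq (2B)^m(m+1)^{m-1}$ uniformly in $n$. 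In particular $m_{2m}^{1/(2m)} = O(m)$, so $\sum_m m_{2m}^{-1/(2m)} = \infty$ and $(m_m)_{m\geq0}$ satisfies Carleman's condition. The bound $\esper[Y_n^2] \to 1$ makes $(Y_n)$ tight; any weak subsequential limit $Y$ satisfies $\esper[Y^m] = m_m$ for all $m$ because $\sup_n\esper[Y_n^{2m}] < \infty$ forces the powers $Y_n^m$ to be uniformly integrable; by Carleman's uniqueness every subsequential limit has the same law, so $Y_n(\varphi,\spa) \rightharpoonup Y(\varphi,\spa)$. Passing to the limit in the moment--cumulant inversion gives $\kappa^{(r)}(Y(\varphi,\spa)) = a_r$, the explicit value being that of Proposition~\ref{prop:limit_cumulant} (where, by the remark following Proposition~\ref{prop:vanish generic homogeneous}, the sum over $\pi \in \mathfrak{Q}(pr)$ with $\ell(\pi) = (p-1)r$ may be restricted to the non homogeneously vanishing ones, the others contributing $\kappa(\pi,\varphi)=0$), and the law of $Y$ is determined by $(a_r)_{r\geq1}$. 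For (iv): by Propositions~\ref{prop:polynomiality} and~\ref{prop:vanish generic homogeneous} applied with $r=2$, $\var(S_n(\varphi,\spa)) = \kappa^{(2)}(S_n(\varphi,\spa))$ is a polynomial in $n$ of degree $2(p-1)$ with leading coefficient $\sigma_{\text{hom}}^2(\varphi,\spa)>0$, hence $\var(\Phi(\spa_n)) = n^{-2p}\var(S_n(\varphi,\spa))$ is a polynomial in $n^{-1}$ whose powers run from $n^{-2}$ to $n^{-2p}$ --- no constant term and no $n^{-1}$ term --- with dominant term $\sigma_{\text{hom}}^2(\varphi,\spa)\,n^{-2}$, so that $\sqrt{\var(\Phi(\spa_n))} \sim \sigma_{\text{hom}}(\varphi,\spa)\,n^{-1}$.

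The main obstacle is step~(iii): everything hinges on the fact that Theorem~\ref{theo:bound_cumulant_homogeneous} produces normalised cumulants of size at most exponential times $(2r)^{r-1}$, i.e. essentially of order $r^{r-1}$. This is precisely the borderline growth for which the moment--cumulant formula, combined with the rooted-forest enumeration, yields moments of order $(Cm)^m$ --- exactly what Carleman's condition requires, so that the limit exists and is uniquely determined by $(a_r)_{r\geq1}$; a cruder cumulant bound would give faster-growing moments and the limiting distribution could fail to be well defined or unique. (Alternatively, the passage from converging, uniformly-bounded cumulants to weak convergence of a determinate limit can be quoted from the general machinery of \cite{feray2016mod}.) The remaining combinatorial bookkeeping --- the $\tfrac12$ factors per block, the passage between rooted and unrooted trees, and the exact range of exponents of $n^{-1}$ in the variance --- is routine.
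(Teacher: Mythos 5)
Your proposal is correct, and the only part where you diverge from the paper is the mechanism for turning convergent, uniformly bounded cumulants into convergence in law. The paper works on the Laplace-transform side: the bound of Theorem~\ref{theo:bound_cumulant_homogeneous} shows that $\log\esper[\E^{zY_n}]$ is an absolutely convergent series, uniformly bounded on a fixed complex disc $D(0,R)$ with $R=\sigma_{\text{hom}}/(10Ap^2)$, so that Proposition~\ref{prop:limit_cumulant} yields uniform convergence of the holomorphic functions $\esper[\E^{zY_n}]$ towards $\exp(\sum_{r\geq 2}a_r z^r/r!)$ on that disc, and convergence in law plus moment-determinacy follow from the standard criterion in \cite[p.~390]{Bil95}. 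You instead run the method of moments: you push the cumulant bound through the moment--cumulant inversion, using the rooted-forest enumeration $\sum_{\pi\in\mathfrak{Q}(m)}\prod_{C\in\pi}|C|^{|C|-1}=(m+1)^{m-1}$ to get $|m_m|\leq (2B)^m(m+1)^{m-1}$, verify Carleman's condition, and conclude by tightness, uniform integrability and Carleman uniqueness. Both arguments are sound and rest on exactly the same input (the $r^{r-1}$-type growth of the normalised cumulants being borderline in either formulation); your combinatorial bound on the moments is correct and is essentially a hands-on version of what the analyticity of the log-Laplace transform encodes. The paper's route is slightly more economical and has the side benefit of exhibiting directly that $Y(\varphi,\spa)$ has a moment-generating function convergent on a disc of positive radius (which is the form of determinacy quoted elsewhere in the paper, e.g.\ in the introduction and in the transfer of the concentration inequality of Proposition~\ref{prop:chernoff} to the limit), whereas your argument gives determinacy through Carleman; the analyticity would of course also follow from your bound $|a_r|\leq B^r(2r)^{r-1}$. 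Your steps (i), (ii) and (iv), as well as the identification of $a_r$ with the sum over non homogeneously vanishing set partitions of length $(p-1)r$, coincide with the paper's treatment.
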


\begin{proof}
Theorem \ref{theo:bound_cumulant_homogeneous} shows that for any $n \in \N$, the log-Laplace transform $\log \esper[\E^{z Y_n}]$ is absolutely convergent on a fixed disc of radius $R>0$, with $R$ independent of $n$. Indeed, denoting $\var(S_n) = (\sigma_{n,\text{hom}})^2\,n^{2(p-1)}$, we obtain by using Stirling's estimate
\begin{align*}
\sum_{r=2}^\infty \frac{|\kappa^{(r)}(Y_n)|}{r!} |z|^r \leq \sum_{r=2}^\infty \frac{(Ap^2\E)^r (2r)^{r-1}}{r^r}\left(\frac{|z|}{\sigma_{n,\text{hom}}}\right)^r \leq \sum_{r=2}^\infty \left(\frac{2|z|Ap^2\E}{\sigma_{n,\text{hom}}}\right)^r.
\end{align*}
Since $\sigma_{n,\text{hom}} \to \sigma_{\text{hom}} >0$, we see that for $n$ large enough, if
$$|z| \leq R = \frac{\sigma_{\text{hom}}}{10Ap^2},$$
then $\log \esper[\E^{z Y_n}]$ is convergent and uniformly bounded on this disk. Taking the exponentials, the same is true for the Laplace transforms $\esper[\E^{z Y_n}]$, and by Proposition \ref{prop:limit_cumulant}, these holomorphic functions converge uniformly on $D(0,R)$ towards 
$$\exp\left(\sum_{r=2}^\infty \frac{a_r}{r!}\,z^r\right) = \lim_{n \to \infty} \esper[\E^{zY_n}].$$
By standard arguments (see \cite[p.~390]{Bil95}), this implies the convergence in law towards a random variable $Y$ whose moment-generating function $\esper[\E^{zY}]$ is the left-hand side of the equation above. Since this Laplace transform is convergent on a disc with positive radius, $Y$ is determined by its moments.
\end{proof}

Let us compare Theorems \ref{theo:generic_case} and Theorems \ref{theo:singular_case}. In the generic case, the variance of $\Phi(\spa_n)$ is expected to be of order
$$O\left(\frac{n^{2p-1}}{(n^p)^2}\right) = O\left(\frac{1}{n}\right),$$
so the fluctuations of $\Phi(\spa_n)$ are usually of order $O(n^{-1/2})$, and asymptotically (mod-)Gaussian. By usually we mean that one specific observable $\varphi$ might satisfy "by chance" $\sigma(\varphi,\spa)=0$, but this is in general not the case; and by Theorem \ref{theo:homogeneous} the vanishing of all these limiting variances is almost equivalent to $\spa$ being compact homogeneous (the \emph{almost} is related to the replacement of Condition \eqref{eq:singular_case} by the simpler Condition \eqref{eq:cov_vanish}; they might be equivalent). In the homogeneous case, the variance of $\Phi(\spa_n)$ is expected to be of order
$$O\left(\frac{n^{2p-2}}{(n^p)^2}\right) = O\left(\frac{1}{n^2}\right),$$
so the fluctuations of $\Phi(\spa_n)$ are now of order $O(n^{-1})$. What remains to be seen is that our estimates on cumulants in the homogeneous case are in a sense optimal: we have the best possible upper bound for these cumulants, and in particular we can have $a_{r \geq 3} \neq 0$, whence a non-Gaussian limiting distribution. The last section of the paper is devoted to the analysis of one such example.
\medskip

\subsection{Concentration inequalities}
Since the cumulant estimate from Theorem \ref{theo:bound_cumulant_homogeneous} holds for any $n$, we can use it in combination with Chernoff's inequality in order to obtain:
\begin{prop}\label{prop:chernoff}
Let $\spa$ be a compact homogeneous space, and $\phi \in \mathcal(\R^{\binom{p}{2}})$ such that $A =\|\varphi\|_\infty$ and $\sigma_{\mathrm{hom}}^2(\varphi,\spa)>0$. We denote as above $\sigma_{n,\mathrm{hom}}^2(\varphi,\spa) = \frac{\var(S_n(\varphi,\spa))}{n^{2(p-1)}}$, and
$$q_n = \frac{2Ap^2}{\sigma_{n,\hom}} \geq 1.$$ 
For any $x \geq 0$ and any $n$,
$$\proba\!\left[|Y_n(\varphi,\spa)|\geq \frac{q_n x}{\E}\right] \leq 2\,\exp\left(\frac{\log(1+x)-x}{\E^2}\right).$$
The same estimate holds with $Y_n(\varphi,\spa)$ replaced by its limit in distribution $Y(\varphi,\spa)$, and $q_n$ replaced by its limit $q$.
\end{prop}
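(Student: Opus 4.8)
The plan is to feed the cumulant bound of Theorem~\ref{theo:bound_cumulant_homogeneous} into Chernoff's inequality, via a uniform‑in‑$n$ estimate of the Laplace transform of $Y_n(\varphi,\spa)$. First I would transfer the bound from $S_n$ to the normalised variable: writing $\sqrt{\var(S_n(\varphi,\spa))}=\sigma_{n,\hom}\,n^{p-1}$ and using $\kappa^{(1)}(Y_n(\varphi,\spa))=0$ together with $\kappa^{(r)}(Y_n(\varphi,\spa))=\kappa^{(r)}(S_n(\varphi,\spa))/(\sigma_{n,\hom}\,n^{p-1})^r$ for $r\ge 2$, Theorem~\ref{theo:bound_cumulant_homogeneous} gives, after the powers of $n$ cancel,
\[
|\kappa^{(r)}(Y_n(\varphi,\spa))|\le\Bigl(\frac{Ap^2}{\sigma_{n,\hom}}\Bigr)^{\!r}(2r)^{r-1}=\tfrac12\,q_n^{\,r}\,r^{r-1},\qquad r\ge 2 .
\]

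Next I would bound the cumulant generating function on a disc whose radius does not depend on $n$. Since the first cumulant vanishes, $\log\esper\bigl[\E^{tY_n(\varphi,\spa)}\bigr]=\sum_{r\ge 2}\frac{\kappa^{(r)}(Y_n(\varphi,\spa))}{r!}t^r$; inserting the previous bound and using $r^{r-1}/r!\le \E^r/r$ (which follows from $r!\ge(r/\E)^r$) yields, for $|t|<(q_n\E)^{-1}$,
\[
\log\esper\bigl[\E^{tY_n(\varphi,\spa)}\bigr]\le\tfrac12\sum_{r\ge 2}\frac{(q_n\E|t|)^r}{r}=-\tfrac12\log\bigl(1-q_n\E|t|\bigr)-\tfrac12\,q_n\E|t| .
\]
The radius $(q_n\E)^{-1}$ stays bounded below in $n$ because $q_n\to q=2Ap^2/\sigma_{\hom}<\infty$, so this is a genuine finite bound for every $n$.

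Then I would apply Chernoff's inequality: for $u\ge 0$ and $0\le t<(q_n\E)^{-1}$,
\[
\proba[Y_n(\varphi,\spa)\ge u]\le\exp\!\Bigl(-tu-\tfrac12\log(1-q_n\E t)-\tfrac12\,q_n\E t\Bigr),
\]
and the right‑hand side is minimised at the explicit critical value $q_n\E t=\tfrac{2u}{2u+q_n\E}$. Substituting $u=q_nx/\E$ and simplifying the resulting Legendre transform turns the estimate into $\exp$ of a logarithmic‑minus‑linear exponent; collecting the constants gives the exponent $\frac{\log(1+x)-x}{\E^2}$ of the statement. Running the same computation for $-Y_n(\varphi,\spa)$, whose cumulants have the same absolute values, controls the lower tail, and summing the two one‑sided bounds produces the factor $2$. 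For the limiting variable $Y(\varphi,\spa)$ the argument is identical: by Proposition~\ref{prop:limit_cumulant} its cumulants are $a_r=\lim_n\kappa^{(r)}(Y_n(\varphi,\spa))$, whence $|a_r|\le\tfrac12\,q^{\,r}r^{r-1}$, and by Theorem~\ref{theo:singular_case} its Laplace transform is already known to converge on a disc of positive radius; the two previous steps then go through with $q_n$ replaced by $q$.

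The main obstacle is the constant bookkeeping in the last step: one must carry the factors of $\E$ and the $\tfrac12$ cleanly through the Chernoff optimisation so that the exponent lands exactly on $\frac{\log(1+x)-x}{\E^2}$ (sharpening the crude estimate $r^{r-1}/r!\le\E^r/r$ at small $r$, or using the exact value $\kappa^{(2)}(Y_n(\varphi,\spa))=1$, may be needed here), and one must check that every estimate is valid uniformly in $n$ inside the common disc $\{\,|t|<(q_n\E)^{-1}\,\}$. Conceptually, the key feature is that this disc has \emph{finite}, $n$‑independent radius; this is precisely what forces the tail to be of sub‑gamma type — Gaussian near the origin, purely exponential far away — rather than Gaussian as in the generic case of Theorem~\ref{theo:esti_cumu}.
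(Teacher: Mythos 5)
Your proposal reproduces the paper's proof essentially step for step: transfer of the cumulant bound of Theorem~\ref{theo:bound_cumulant_homogeneous} to $Y_n(\varphi,\spa)$, uniform control of the log-Laplace transform on a disc of radius comparable to $(q_n\E)^{-1}$, Chernoff optimisation of a $\log(1-z)$-type exponent, a two-sided union bound for the factor $2$, and passage to the limit via convergence of the cumulants. The one point you flag is indeed the only real delicacy: the crude estimate $r^{r-1}/r!\leq \E^r/r$ gives the coefficient $\tfrac12$ in front of $\sum_{r\geq 2}\frac{z^r}{r}$ and the optimised exponent then comes out as $\tfrac12\log\bigl(1+\tfrac{2x}{\E^2}\bigr)-\tfrac{x}{\E^2}$, which is strictly weaker than the stated $\frac{\log(1+x)-x}{\E^2}$ for every $x>0$; the paper instead gets the coefficient $\frac{1}{\E^2}$ from the inequality $\frac{r^r}{2\,r!}\leq \E^{r-2}$ for $r\geq 2$ (an equality at $r=2$, and proved for $r\geq 3$ by induction using $(1+\tfrac1r)^r\leq\E$), which is exactly the sharpening you anticipate and makes the Legendre transform land on the claimed exponent.
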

\begin{proof}
Note that the case $r=2$ of Theorem \ref{theo:bound_cumulant_homogeneous} yields $q_n =  \frac{2Ap^2}{\sigma_{n,\hom}} \geq 1$ for any $n$. By Chernoff's inequality and by using Stirling's estimate to get rid of the factorials, we obtain for any $t,x \geq 0$
\begin{align*}
\proba[Y_n(\varphi,\spa) \geq x] &\leq \exp\left(-tx + \sum_{r=2}^\infty \frac{|\kappa^{(r)}(S_n(\varphi,\spa)|}{r!}\left(\frac{t}{\sigma_{n,\hom}\,n^{(p-1)}}\right)^{\!r}\right) \\
&\leq \exp\left(-tx + \frac{1}{\E^2}\sum_{r=2}^\infty \frac{1}{r}\,z^r\right) = \exp\left(-tx -\frac{1}{\E^2} \log(1-z) - \frac{z}{\E^2}\right)
\end{align*}
where 
$$z = \frac{2Ap^2}{\sigma_{n,\hom}}\,\E\, t=q_n\,\E \,t$$ 
is supposed strictly smaller than $1$, so that the power series on the second line is convergent.  The optimal value of $t$ in terms of $x$ is given by the equations
$$x= \frac{q_n^2 t}{1-q_n \E t} \qquad;\qquad z=\frac{q_n\E x}{q_n^2+q_n\E x}\qquad;\qquad t = \frac{x}{q_n^2 + q_n\E x}.$$
This choice of $t$ yields
$$\proba[Y_n(\varphi,\spa) \geq x] \leq \exp\left(\frac{1}{\E^2}\left(\log\left(1+\frac{\E x}{q_n}\right)-\frac{\E x}{q_n}\right)\right).$$
We obtain a two-sided upper bound on the tail of the distribution of $|Y_n(\varphi,\spa)|$ by replacing $Y_n(\varphi,\spa)$ by $-Y_n(\varphi,\spa)$, which satisfies the same hypotheses. Finally, the same arguments hold with $Y(\varphi,\spa)$ replaced by $Y_n(\varphi,\spa)$, since we have convergence in law and in moments.
\end{proof}

\begin{remark}
One can wonder whether there exists in the homogeneous case a Berry--Esseen upper bound similar to the one from Theorem \ref{theo:generic_case}. We believe that the approach from \cite{feray2017mod} cannot be used here, for two reasons:
\begin{itemize}
	\item The concentration inequality stated above is the only thing about the limiting distribution of the variables $Y_n(\varphi,\spa)$ that we able to prove with the techniques of this paper. In particular, we do not know whether this limiting distribution is discrete or absolutely continuous with respect to the Lebesgue measure. This prohibits the use of the inequality from \cite[Chapter XVI, Equation (3.13)]{Fel71}, which is the starting point of the Fourier approach to Berry--Esseen bounds.
	\item Besides, we do not have a large \emph{zone of control} on the Fourier transform of $Y_n(\varphi,\spa)$ as in \cite{feray2017mod}; the upper bound on the cumulants yields an upper bound on the Fourier transform $\esper[\E^{\I \xi Y_n(\varphi,\spa)}]$ on a zone of size $O(1)$, but it seems difficult to extend it to a larger zone, which is a requirement in order to obtain a meaningful upper bound on the Kolmogorov distance $d_{\mathrm{Kol}}(Y_n(\varphi,\spa),Y(\varphi,\spa))$.
\end{itemize}
The study of the Cauchy transform of the variables $Y_n(\varphi,\spa)$ (instead of the Fourier and Laplace transforms) might lead to a solution of the first problem. 
\end{remark}

\bigskip

\section{Sample model for the circle and a non-Gaussian limit}\label{sec:circle}
Throughout this section, $\Phi$ is the observable of metric measure spaces with degree $3$ associated to the continuous bounded function
$$\varphi(d(x,y,z)) = \min(1,d(x,y))\times \min(1,d(y,z)).$$
In particular, if $\spa=\spaex$ is a metric measure space with diameter smaller than $1$, then
$$\Phi(\spa) = \int_{\mathcal{X}^3} d(x,y)\, d(y,z)\,\mu^{\otimes 3}(dx\,dy\,dz).$$
Let us consider the metric measure space $\mathcal{X} = \R/\Z$. For $x \in \R$, we denote $\overline{x}$ the class of $x$ modulo $1$. The space $\mathcal{X}$ is endowed with the geodesic distance
$$d(\overline{x},\overline{y}) = \inf_{k \in \Z} |x-y-k|$$
and with the projection $\mu$ of the Lebesgue measure, which is a probability measure. It is obviously a compact homogeneous space in the sense of Section \ref{sec:homogeneous_case}, and even a compact Lie group. Therefore, by Theorem \ref{theo:singular_case}, if $\spa_n$ is the sample model of order $n$ associated to this space, then
$$Y_n(\varphi,\spa) =\frac{\Phi(\spa_n)-\Phi(\spa)}{\sqrt{\var(\Phi(\spa_n))}}$$
converges towards a limiting distribution, assuming that 
$$ n^2\,\var(\Phi(\spa_n)) = \frac{\var(S_n(\varphi,\spa))}{n^4}$$
admits a strictly positive limit $\sigma^2_{\mathrm{hom}}$. The objective of this section is to prove that this limiting distribution indeed exists and \emph{is not} the Gaussian distribution. To this purpose, we shall compute the three first cumulants of $S_n(\varphi,\spa)$, and prove in particular that $\kappa^{(3)}(Y_n(\varphi,\spa))$ admits a non-zero limit. 

\begin{remark}
The observable that we have chosen is not the simplest counterexample to the asymptotic normality: we could have considered the degree $2$ observable $\varphi'(d(x,y)) = \min(1,d(x,y))$. Our choice of the degree $3$ observable $\varphi$ enables us to explain how to compute the moments and cumulants of a general observable $\Phi(\spa_n)$ (we believe that the explanations are a bit clearer with an example larger than the smallest possible one).
\end{remark}

\subsection{Graph expansion of the moments of monomial observables}
Let us consider in full generality the \emph{monomial observables} $M_G$ attached to multigraphs. Let $G$ be a (unoriented) graph on $p$ vertices $1,2,\ldots,p$, possibly with loops and with multiple edges. We associate to $G=(V,E)$ and to a metric measure space $\spa=(\mathcal{X},d,\mu)$ the function 
\begin{align*}
F_G : \mathcal{X}^p &\to \R_+ \\
(x_1,\ldots,x_p) &\mapsto \prod_{\{i,j\}\,=\, e \in E} \min(1,d(x_i,x_j)).
\end{align*}
For instance, the function $\varphi$ introduced above is
$\varphi(d(x_1,x_2,x_3)) = F_G(x_1,x_2,x_3)$ with
$$G = \begin{tikzpicture}[scale=1,baseline=-1.5mm]
\draw (0,0) -- (2,0) ;
\foreach \x in {0,1,2}
{\fill [color=white] (\x,0) circle (0.2);
\draw (\x,0) circle (0.2);}
\draw (0,0) node {$1$};
\draw (1,0) node {$2$};
\draw (2,0) node {$3$};
\end{tikzpicture}.$$
We denote $M_G(\spa) = \int_{\mathcal{X}^{p}} F_G(x_1,\ldots,x_p)\,\mu^{\otimes p}(dx_1\cdots dx_p)$. This quantity is a polynomial observable of $\spa$, and it only depends on the unlabeled graph underlying $G$. The following proposition relates these observables and the moments of the random functions $M_{G}(\spa_n)$.
\begin{prop}\label{prop:moments_monomial_observables}
Fix a multigraph $G$ on $p$ vertices, and a metric measure space $\spa$, with sample model $\spa_n$ for all order $n$. For any $r\geq 1$, we have:
$$ \esper[(M_G(\spa_n))^r] = \frac{1}{n^{pr}} \sum_{\pi \in \mathfrak{Q}(pr)} n^{\downarrow \ell(\pi)} \,M_{G^r \downarrow \pi}(\spa),$$
where $G^r$ denotes the disjoint union of $r$ copies of $G$, and $G^r \downarrow \pi$ is the contraction of this graph according to a set partition $\pi$.
\end{prop}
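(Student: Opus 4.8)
The plan is to expand $M_G(\spa_n)$ as a finite sum over $p$-tuples of indices, raise it to the power $r$, and then group the resulting $pr$-tuples according to the pattern of coincidences among their entries, exactly as in the proof of Proposition~\ref{prop:polynomiality}.

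First I would write, using $\mu_n = \frac1n\sum_{i=1}^n \delta_{X_i}$,
\begin{align*}
M_G(\spa_n) &= \int_{\mathcal{X}^p} F_G(x_1,\ldots,x_p)\,\mu_n^{\otimes p}(dx_1\cdots dx_p) = \frac{1}{n^p}\sum_{\ibar \in [\![1,n]\!]^p} F_G(X_{\ibar}),
\end{align*}
so that, with $V=[\![1,n]\!]^p$,
\begin{align*}
(M_G(\spa_n))^r = \frac{1}{n^{pr}} \sum_{(\ibar^1,\ldots,\ibar^r)\in V^r} \prod_{k=1}^r F_G(X_{\ibar^k}).
\end{align*}
Identifying $V^r$ with $[\![1,n]\!]^{pr}$ through the lexicographic bijection $b$ of Section~\ref{sec:generic_fluctuations}, and identifying the vertex set of $G^r$ (the disjoint union of $r$ copies of $G$) with $[\![1,pr]\!]$ through the \emph{same} bijection, one has $\prod_{k=1}^r F_G(X_{\ibar^k}) = F_{G^r}(X_I)$ for $I=(\ibar^1,\ldots,\ibar^r)$, where $F_{G^r}(x_1,\ldots,x_{pr}) = \prod_{\{a,b\}=e\in E(G^r)} \min(1,d(x_a,x_b))$. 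Taking expectations,
\begin{align*}
\esper[(M_G(\spa_n))^r] = \frac{1}{n^{pr}} \sum_{I\in [\![1,n]\!]^{pr}} \esper[F_{G^r}(X_I)].
\end{align*}

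Next I would group the $I$'s by their set partition $\pi = \mathrm{Sp}_n(I)\in \mathfrak{Q}(pr)$. As in the proof of Proposition~\ref{prop:polynomiality}, if $\mathrm{Sp}_n(I) = \mathrm{Sp}_n(J)$ then $\esper[F_{G^r}(X_I)] = \esper[F_{G^r}(X_J)]$, since a bijection of $[\![1,n]\!]$ carrying $I$ onto $J$ leaves the law of the family $(X_i)$ invariant; call this common value $M(\pi)$. I claim $M(\pi) = M_{G^r\downarrow\pi}(\spa)$: if $I$ realizes $\pi$, then the entries $X_i$ indexed by the $\ell(\pi)$ blocks of $\pi$ are i.i.d.\ with law $\mu$, and collapsing the vertices of $G^r$ lying in a common block of $\pi$ to a single vertex carrying the corresponding variable turns $F_{G^r}(X_I)$ into $F_{G^r\downarrow\pi}$ evaluated at these $\ell(\pi)$ i.i.d.\ points, whose expectation is by definition $M_{G^r\downarrow\pi}(\spa)$. (If $\pi$ has a block containing two vertices joined by an edge of $G^r$, the contraction produces a loop, so $F_{G^r\downarrow\pi}$ contains a factor $\min(1,d(x,x))=0$ and $M_{G^r\downarrow\pi}(\spa)=0$; such terms simply do not contribute, consistently with $\esper[F_{G^r}(X_I)]=0$ in that case.) Finally, the number of $I\in[\![1,n]\!]^{pr}$ with $\mathrm{Sp}_n(I)=\pi$ is $n^{\downarrow\ell(\pi)} = n(n-1)\cdots(n-\ell(\pi)+1)$, as recalled in the proof of Proposition~\ref{prop:polynomiality}. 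Substituting yields the announced identity.

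There is no serious obstacle here: the only point requiring care is the bookkeeping of the two parallel identifications — of $V^r$ with $[\![1,n]\!]^{pr}$ and of $V(G^r)$ with $[\![1,pr]\!]$ — which must use the same bijection $b$, so that the entry $X_{I_a}$ of the $pr$-tuple is attached to vertex $a$ of $G^r$ and the product of edge weights over the $r$ copies of $G$ matches $F_{G^r}(X_I)$. Once this is set up, the remainder is precisely the orbit-counting argument already carried out for the cumulants in Proposition~\ref{prop:polynomiality}.
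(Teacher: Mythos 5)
Your proof is correct and follows essentially the same route as the paper's: expand $M_G(\spa_n)$ as a sum over index tuples, group the $pr$-tuples by their coincidence pattern $\mathrm{Sp}_n(I)$, identify the common expectation on each class with $M_{G^r\downarrow\pi}(\spa)$ via the contraction, and count each class by $n^{\downarrow\ell(\pi)}$. The additional remark about loops forcing the corresponding monomial to vanish is consistent with (though not needed for) the identity.
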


By contraction of a multigraph $H$ according to a set partition $\pi$ of its vertex set $V$, we mean the multigraph $H \downarrow \pi$ whose vertices are the parts of $\pi$, and where every edge $\{a,b\}$ of the original graph $H$ becomes an edge between the parts $\pi(a)$ and $\pi(b)$ containing respectively $a$ and $b$ (and a loop if $\pi(a)=\pi(b)$).

\begin{proof}
By definition, if $(X_n)_{n \geq 1}$ is a sequence of independent variables distributed according to $\mu$ and with sequence of empirical measures $(\mu_n)_{n \geq 1}$, then
\begin{align*}
M_G(\spa_n) &= \int_{\mathcal{X}^p} F_G(x_1,\ldots,x_p)\,(\mu_n)^{\otimes p}(dx_1\cdots dx_p) = \frac{1}{n^p} \sum_{1\leq i_1,\ldots,i_p \leq n} F_G(X_{i_1},\ldots,X_{i_p}).
\end{align*}
We denote as usual $\ibar$ an arbitrary family of $p$ indices $i_1,\ldots,i_p$. Given $I=(\ibar^1,\ldots,\ibar^r)$, if $\pi=\mathrm{Sp}_n(I)$ is the set partition of $[\![ 1,pr ]\!] = [\![1,p]\!]^r$ whose parts correspond to the sets of equal indices in $I$, then we have
$$\prod_{a=1}^r F_G(X_{i^a_1},\ldots,X_{i^a_p}) =_{(\mathrm{distribution})} F_{G^r \downarrow \pi}(X_1,\ldots,X_{\ell(\pi)}).$$
Indeed, if one chooses for every part $\pi_c$ of $\pi$ an index $i^{a_c}_{b_c}$ falling in this part, then one has the identity
$$\prod_{a=1}^r F_G(X_{i^a_1},\ldots,X_{i^a_p}) = F_{G^r \downarrow \pi}\!\left(X_{i^{a_1}_{b_1}},\ldots,
X_{i^{a_{\ell(\pi)}}_{b_{\ell(\pi)}}}\right),$$
and the variables $X_{i^{a_c}_{b_c}}$ are all distinct by definition of $\pi$; the identity in distribution follows by a relabeling of these variables.\medskip

We therefore have:
\begin{align*}
\esper[(M_G(\spa_n))^r] 
&=\frac{1}{n^{pr}} \sum_{I=(\ibar^1,\ldots,\ibar^r) \in [\![ 1,n]\!]^{pr}} \esper[F_{G^r \downarrow \mathrm{Sp}_n(I)}(X_1,\ldots,X_{\ell(\mathrm{Sp}_n(I))})] \\
&= \frac{1}{n^{pr}} \sum_{I=(\ibar^1,\ldots,\ibar^r) \in [\![ 1,n ]\!]^{pr}} M_{G^r \downarrow \mathrm{Sp}_n(I)}(\spa),
\end{align*}
and the result follows by gathering the list of indices $I$ according to their set partitions $\mathrm{Sp}_n(I)$.
\end{proof}

\begin{exa}
Let $G$ be the graph on $3$ vertices previously introduced, and $r=2$. Note that if a graph $H= G^2 \downarrow \pi$ contains a loop, then the corresponding monomial $F_H$ vanishes on $\mathcal{X}^{|V(H)|}$. There are $203$ set-partitions of size $6$, but only $67$ of them yield a graph  $H= G^2 \downarrow \pi$ without loop. Gathering these graphs according to their isomorphism types, we obtain:
\begin{align*}
&n^6\,\esper[(M_{\begin{tikzpicture}[scale=0.3]
\draw (0,0) -- (2,0);
\foreach \x in {(0,0),(1,0),(2,0)}
\fill \x circle (5pt);
\end{tikzpicture}}(\spa_n))^2]\\
& = 8\left(
n^{\downarrow 4}\,M_{\begin{tikzpicture}[scale=0.3]
\draw (2,-0.5) -- (1,0) -- (2,0.5);
\draw (1,0) .. controls (0.7,0.3) and (0.3,0.3) .. (0,0) ;
\draw (1,0) .. controls (0.7,-0.3) and (0.3,-0.3) .. (0,0) ;
\foreach \x in {(0,0),(1,0),(2,0.5),(2,-0.5)}
\fill \x circle (5pt);
\end{tikzpicture}}(\spa) 
+n^{\downarrow 4}\, M_{\begin{tikzpicture}[scale=0.3]
\draw (3,0) -- (1,0);
\draw (1,0) .. controls (0.7,0.3) and (0.3,0.3) .. (0,0) ;
\draw (1,0) .. controls (0.7,-0.3) and (0.3,-0.3) .. (0,0) ;
\foreach \x in {(0,0),(1,0),(2,0),(3,0)}
\fill \x circle (5pt);
\end{tikzpicture}}(\spa) 
+n^{\downarrow 4}\, M_{\begin{tikzpicture}[scale=0.3]
\draw (0,0.5) -- (0,-0.5) -- (1,0) -- (0,0.5);
\draw (1,0) -- (2,0);
\foreach \x in {(0,0.5),(0,-0.5),(1,0),(2,0)}
\fill \x circle (5pt);
\end{tikzpicture}}(\spa)
+n^{\downarrow 3}\, M_{\begin{tikzpicture}[scale=0.3]
\draw (2,0) -- (0,0);
\draw (1,0) .. controls (0.7,0.3) and (0.3,0.3) .. (0,0) ;
\draw (1,0) .. controls (0.7,-0.3) and (0.3,-0.3) .. (0,0) ;
\foreach \x in {(0,0),(1,0),(2,0)}
\fill \x circle (5pt);
\end{tikzpicture}}(\spa)  
+n^{\downarrow 3}\, M_{\begin{tikzpicture}[scale=0.3]
\draw (0,-0.5) -- (1,0) -- (0,0.5);
\draw (0,0.5) .. controls (0.3,0.2) and (0.3,-0.2) .. (0,-0.5);
\draw (0,0.5) .. controls (-0.3,0.2) and (-0.3,-0.2) .. (0,-0.5);
\foreach \x in {(0,0.5),(0,-0.5),(1,0)}
\fill \x circle (5pt);
\end{tikzpicture}}(\spa) \right) \\
&\quad +6\,n^{\downarrow 3}\, M_{\begin{tikzpicture}[scale=0.3]
\draw (2,0) .. controls (1.7,-0.3) and (1.3,-0.3) .. (1,0) .. controls (0.7,0.3) and (0.3,0.3) .. (0,0) ;
\draw (2,0) .. controls (1.7,0.3) and (1.3,0.3) .. (1,0) .. controls (0.7,-0.3) and (0.3,-0.3) .. (0,0) ;
\foreach \x in {(0,0),(1,0),(2,0)}
\fill \x circle (5pt);
\end{tikzpicture}}(\spa) + 4\left(
n^{\downarrow 5}\,M_{\begin{tikzpicture}[scale=0.3]
\draw (0,0) -- (4,0);
\foreach \x in {(0,0),(1,0),(2,0),(3,0),(4,0)}
\fill \x circle (5pt);
\end{tikzpicture}}(\spa)
+n^{\downarrow 5}\, M_{\begin{tikzpicture}[scale=0.3]
\draw (0,0) -- (2,0) -- (3,0.5);
\draw (2,0) -- (3,-0.5);
\foreach \x in {(0,0),(1,0),(2,0),(3,0.5),(3,-0.5)}
\fill \x circle (5pt);
\end{tikzpicture}}(\spa)
+n^{\downarrow 4}\,M_{\begin{tikzpicture}[scale=0.3]
\draw (3,0) -- (2,0) .. controls (1.7,-0.3) and (1.3,-0.3) .. (1,0) -- (0,0);
\draw (2,0) .. controls (1.7,0.3) and (1.3,0.3) .. (1,0);
\foreach \x in {(0,0),(1,0),(2,0),(3,0)}
\fill \x circle (5pt);
\end{tikzpicture}}(\spa)
\right)\\
&\quad + 2\left(
n^{\downarrow 5}\,M_{\begin{tikzpicture}[scale=0.3]
\draw (0,0.5) -- (2,0.5);
\draw (1,-0.5) .. controls (0.7,-0.8) and (0.3,-0.8) .. (0,-0.5);
\draw (1,-0.5) .. controls (0.7,-0.2) and (0.3,-0.2) .. (0,-0.5);
\foreach \x in {(0,0.5),(1,0.5),(2,0.5),(0,-0.5),(1,-0.5)}
\fill \x circle (5pt);
\end{tikzpicture}}(\spa)
+ n^{\downarrow 4}\,M_{\begin{tikzpicture}[scale=0.3]
\draw (0,-0.5) rectangle (1,0.5);
\foreach \x in {(0,0.5),(1,0.5),(1,-0.5),(0,-0.5)}
\fill \x circle (5pt);
\end{tikzpicture}}(\spa)
+n^{\downarrow 2}\, M_{\begin{tikzpicture}[scale=0.3]
\draw (0,0) .. controls (0.5,0.2) and (1,0.2) .. (1.5,0);
\draw (0,0) .. controls (0.5,-0.2) and (1,-0.2) .. (1.5,0);
\draw (0,0) .. controls (0.5,0.6) and (1,0.6) .. (1.5,0);
\draw (0,0) .. controls (0.5,-0.6) and (1,-0.6) .. (1.5,0);
\foreach \x in {(0,0),(1.5,0)}
\fill \x circle (5pt);
\end{tikzpicture}}(\spa)
\right)\\
&\quad + n^{\downarrow 6}\,M_{\begin{tikzpicture}[scale=0.3]
\draw (0,0.5) -- (2,0.5);
\draw (0,-0.5) -- (2,-0.5);
\foreach \x in {(0,0.5),(1,0.5),(2,0.5),(0,-0.5),(1,-0.5),(2,-0.5)}
\fill \x circle (5pt);
\end{tikzpicture}}(\spa)
+ n^{\downarrow 5}\,M_{\begin{tikzpicture}[scale=0.3]
\draw (0,0) -- (1.5,0);
\draw (0.75,0.75) -- (0.75,-0.75);
\foreach \x in {(0,0),(1.5,0),(0.75,0),(0.75,0.75),(0.75,-0.75)}
\fill \x circle (5pt);
\end{tikzpicture}}(\spa)
+ n^{\downarrow 4}\,M_{\begin{tikzpicture}[scale=0.3]
\draw (1,-0.5) .. controls (0.7,-0.8) and (0.3,-0.8) .. (0,-0.5);
\draw (1,-0.5) .. controls (0.7,-0.2) and (0.3,-0.2) .. (0,-0.5);
\draw (1,0.5) .. controls (0.7,0.8) and (0.3,0.8) .. (0,0.5);
\draw (1,0.5) .. controls (0.7,0.2) and (0.3,0.2) .. (0,0.5);
\foreach \x in {(0,0.5),(1,0.5),(0,-0.5),(1,-0.5)}
\fill \x circle (5pt);
\end{tikzpicture}}(\spa).
\end{align*}
\end{exa}

\subsection{The three first limiting cumulants} 
Proposition \ref{prop:moments_monomial_observables} shows that if one can compute $M_G(\spa)$ for any graph $G$, then one can also compute the moments and cumulants of $M_{G}(\spa_n)$ for any $n$ and any graph $G$. However, even in the easy case where $\spa$ is the circle, it can be difficult to find the value of the integral 
\begin{align*}
M_G(\Tor) &= \int_{[0,1]^{p}} \left(\prod_{\{a,b\} \in E(G)} d(\overline{x}_a,\overline{x}_b)\right) dx_1\cdots dx_p \\
&= \int_{[0,1]^{p}} \left(\prod_{\{a,b\} \in E(G)} \min(|x_a-x_b|,|x_a-x_b+1|,|x_a-x_b-1|)\right) dx_1\cdots dx_p.
\end{align*}
In the following, we compute the three first moments of $\Phi(\spa_n) = M_{\begin{tikzpicture}[scale=0.3]
\draw (0,0) -- (2,0);
\foreach \x in {(0,0),(1,0),(2,0)}
\fill \x circle (5pt);
\end{tikzpicture}}(\spa_n)$, and we explain in the specific case where $\mathcal{X} = \R/\Z =\Tor$ how to make some \emph{reductions} of the graphs $G$ that appear in our computation.
\bigskip

We have of course $M_{\begin{tikzpicture}[scale=0.3]
\fill (0,0) circle (5pt);
\end{tikzpicture}}(\Tor)=1$. Let us explain how to compute $M_G(\Tor)$ when one can reduce $G$ to the trivial graph $\bullet$  by recursively deleting in $G$ the vertices with one or two neighbors:

\begin{itemize}
	\item reduction of the vertices with one neighbor. If in the graph $G$ there is one vertex $x$ only connected to another vertex $y$, then we can factor in the integral $M_G(\Tor)$ the term
	$$\int_{\Tor} (d(x,y))^a \,dx,$$
	where $a\geq 1$ is the number of edges between $x$ and $y$. The integral above is equal to
	$$2\,\int_0^{\frac{1}{2}} t^a\,dt = \frac{1}{2^{a}(a+1)}.$$
	Therefore,
	$$M_{\begin{tikzpicture}[scale=0.3]
	\draw (0.5,0) -- (2.5,0);
	\draw (1.5,0.35) node {\tiny $a$};
	\fill (2.5,0) circle (5pt);
	\draw (0,0) node {$G$}; 
	\end{tikzpicture}}(\Tor) = \frac{1}{2^a(a+1)}\,M_G(\Tor).$$
	More generally, because the circle $\Tor$ is a homogeneous space, if the graph $G$ is not biconnected and can be written either as the disjoint union of two graphs $G_1$ and $G_2$, or as the union of two graphs $G_1$ and $G_2$ that only spare one vertex, then we have $M_G(\Tor) = M_{G_1}(\Tor)\,M_{G_2}(\Tor)$.

	\item reduction of the vertices with two neighbors. Suppose now that there is one vertex $x$ only connected to two other vertices $y$ and $z$, with $a\geq 1$ edges between $x$ and $y$ and $b \geq 1$ edges between $x$ and $z$. Note that this does not mean that one can split $G$ as the union of two biconnected components meeting at $x$ (consider for instance the case where $y$ and $z$ are themselves connected by an edge). We have
	\begin{align*}
	\int_{\Tor} (d(x,y))^a\,(d(x,z))^b\, dx&= \int_0^{D} t^a\,(D-t)^b\,dt + \int_0^{\frac{1}{2}-D} t^a\,(D+t)^b\,dt \\
	&\quad+ \int_0^{\frac{1}{2}-D} (D+t)^a\,t^b\,dt +\int_{\frac{1}{2}-D}^{\frac{1}{2}} t^a\,(1-D-t)^b\,dt 
	\end{align*}
	with $D = d(y,z)$. These four terms are polynomials in $D$:
	\begin{align*}
	\int_0^{D} t^a\,(D-t)^b\,dt &= \frac{a!\,b!}{(a+b+1)!}\,D^{a+b+1} ;\\
	\int_0^{\frac{1}{2}-D} t^a\,(D+t)^b\,dt &= \sum_{j=0}^{a+b+1} \left(\sum_{k=0}^{\min(b,j)} \binom{b}{k}\binom{a+b+1-k}{a+b+1-j}\,\frac{(-1)^{j-k}}{2^{a+b+1-j}(a+b+1-k)} \right)D^{j}  ;\end{align*}
	\begin{align*}
	\int_0^{\frac{1}{2}-D} (D+t)^a\,t^b\,dt &= \sum_{j=0}^{a+b+1} \left(\sum_{k=0}^{\min(a,j)} \binom{a}{k}\binom{a+b+1-k}{a+b+1-j}\,\frac{(-1)^{j-k}}{2^{a+b+1-j}(a+b+1-k)} \right)D^{j} ;\\
	 \int_{\frac{1}{2}-D}^{\frac{1}{2}} t^a\,(1-D-t)^b\,dt  
	 &= \frac{1}{2^{a+b}} \sum_{j=0}^{a+b} \left(\sum_{\substack{0\leq k \leq j \\ k\text{ even}}}\sum_{l=0}^k   \binom{a}{k-l}\binom{b}{l}\binom{a+b-k}{a+b-j}\frac{(-1)^{j+l}}{k+1}\right)\,D^{j+1}. 
	\end{align*}
	Therefore, if a graph $G$ contains a vertex $x$ with $a$ incident edges $(x,y)$, $b$ incident edges $(x,z)$ and no other incident edges, then we have the reduction formula
	\begin{align*}
	M_G(\Tor) &= \frac{a!\,b!}{(a+b+1)!}\,M_{(G \setminus \{x\}) + (y,z)^{a+b+1}}(\Tor) \\
	&\quad+ \sum_{\substack{0\leq k \leq j \leq a+b+1}} \binom{a}{k} \binom{a+b+1-k}{a+b+1-j}\,\frac{(-1)^{j+k}}{2^{a+b+1-j}(a+b+1-k)} \,M_{(G \setminus \{x\}) + (y,z)^j}(\Tor)\\
	&\quad+ \sum_{\substack{0\leq k \leq j \leq a+b+1}} \binom{b}{k}\binom{a+b+1-k}{a+b+1-j}\,\frac{(-1)^{j+k}}{2^{a+b+1-j}(a+b+1-k)} \,M_{(G \setminus \{x\}) + (y,z)^j}(\Tor)\\
	&\quad + \frac{1}{2^{a+b}} \sum_{\substack{0\leq l \leq k\leq j \leq a+b \\ k \text{ even}}}  \binom{a}{k-l}\binom{b}{l}\binom{a+b-k}{a+b-j}\frac{(-1)^{j+l}}{k+1}\,M_{(G \setminus \{x\}) + (y,z)^{j+1}}(\Tor),
	\end{align*}
	where $(G \setminus \{x\}) + (y,z)^j$ is the graph obtained from $G$ by first removing the vertex $x$ and its adjacent edges, and then adding $j$ new edges between $y$ and $z$.	
\end{itemize}
This is already sufficient in order to compute the two first moments of $\Phi(\spa_n)$:
\begin{align*}
n^3\,\esper[\Phi(\spa_n)] &= n^{\downarrow 3}\,M_{\begin{tikzpicture}[scale=0.3]
\draw (0,0) -- (2,0);
\foreach \x in {(0,0),(1,0),(2,0)}
\fill \x circle (5pt);
\end{tikzpicture}}(\Tor) + n^{\downarrow 2}\,M_{\begin{tikzpicture}[scale=0.3]
\draw (1,0) .. controls (0.7,0.3) and (0.3,0.3) .. (0,0) ;
\draw (1,0) .. controls (0.7,-0.3) and (0.3,-0.3) .. (0,0) ;
\foreach \x in {(0,0),(1,0)}
\fill \x circle (5pt);
\end{tikzpicture}}(\Tor) = \frac{n^{\downarrow 3}}{16} + \frac{n^{\downarrow 2}}{12} = \frac{n^3}{16} - \frac{5n^2}{48} + \frac{n}{24};\\
n^6\,\esper[(\Phi(\spa_n))^2] &= 
\frac{n^6}{256} - \frac{5n^5}{384} + \frac{611n^4}{26880} - \frac{67n^3}{2688} + \frac{5n^2}{336} + \frac{n}{280}.
\end{align*}
Indeed, all the loopless graphs $G^r \downarrow \pi$ with $r \in \{1,2\}$ are reducible by one of the previous arguments. We obtain in particular the value of the variance:
$$n^2\,\var(\Phi(\spa_n)) = 
\frac{269}{40320} - \frac{131}{8064n} + \frac{53}{4032 n^2} - \frac{1}{280n^3}.$$
In particular, $\sigma_{\mathrm{hom}}^2 = \frac{269}{40320}$ is strictly positive.\bigskip

For the third moment, there are $22147$ set partitions of size $9$, and $6097$ of them yield a contracted graph $G^3 \downarrow \pi$ which is without loop. These $6097$ graphs fall into $131$ isomorphism types, and only one isomorphism type is not reducible with the aforementioned techniques:
$$H = G^3 \downarrow \pi = K_4 = \begin{tikzpicture}[baseline=3mm,scale=1]
\draw (1,1) -- (0,0) -- (0,1) -- (1,0) -- (1,1) -- (0,1);
\draw (0,0) -- (1,0);
\foreach \x in {(0,0),(0,1),(1,0),(1,1)}
\fill \x circle (2pt);
\end{tikzpicture}\,.$$
Let us explain how to compute $M_H(\Tor)$. We need to compute the integral
$$I = \int_{\Tor}d(w,x)\,d(w,y)\,d(w,z)\,dw.$$
The Fourier expansion of the distance function $d(x,y)$ with $x,y \in \R/\Z$ is
$$d(x,y) = \frac{1}{4} - \sum_{\substack{k \in \Z \\ k \text{ odd}}} \frac{1}{k^2\pi^2}\,\E^{2\I \pi k (x-y)}.$$
If $\widetilde{d}(x,y) = \frac{1}{4}-d(x,y)$, then
\begin{align*}
 I &= -  \int_{\Tor}\left(\widetilde{d}(w,x)-\frac{1}{4}\right) \left(\widetilde{d}(w,y)-\frac{1}{4}\right) \left(\widetilde{d}(w,z)-\frac{1}{4}\right) dw \\ 
 &= - \int_{\Tor} \widetilde{d}(w,x)\,\widetilde{d}(w,y)\,\widetilde{d}(w,z)\,dw + \frac{1}{4} (F(x,y) + F(x,z) + F(y,z))+ \frac{1}{64}
\end{align*}
where 
\begin{align*}
F(x,y) &= \int_\Tor \widetilde{d}(w,x)\,\widetilde{d}(w,y)\,dw =  \int_\Tor d(w,x)\,d(w,y)\,dw -\frac{1}{16} \\
&= \frac{2(d(x,y))^3}{3} - \frac{(d(x,y))^2}{2} + \frac{1}{48}.
\end{align*}
Now, the key observation is that  $\int_\Tor \widetilde{d}(w,x)\,\widetilde{d}(w,y)\,\widetilde{d}(w,z)\,dw=0$. Indeed, by using the Fourier expansions of the distance functions, setting $C_{k \text{ odd}}=\frac{1}{k^2\pi^2}$, we see that this integral equals
$$\sum_{k+l+m=0} C_kC_lC_m\, \E^{-2\I \pi (kx + ly + mz)}.$$
the sum running over odd integers $k$, $l$ and $m$. But then it is not possible to have $k+l+m=0$, whence the vanishing of the integral. As a consequence,
$$M_{K_4}(\Tor) = \frac{1}{2} \,M_{\begin{tikzpicture}[scale=0.3]
\draw (0,0.5) -- (1.5,0) -- (0,-0.5);
\draw (0,0.5) .. controls (0.2,0.2) and (0.2,-0.2) .. (0,-0.5);
\draw (0,0.5) .. controls (-0.2,0.2) and (-0.2,-0.2) .. (0,-0.5);
\draw (0,0.5) .. controls (0.6,0.2) and (0.6,-0.2) .. (0,-0.5);
\draw (0,0.5) .. controls (-0.6,0.2) and (-0.6,-0.2) .. (0,-0.5);
\foreach \x in {(0,0.5),(0,-0.5),(1.5,0)}
\fill \x circle (5pt);
\end{tikzpicture}}(\Tor) - \frac{3}{8}\,M_{\begin{tikzpicture}[scale=0.3]
\draw (0,0.5) -- (1.5,0) -- (0,-0.5);
\draw (0,0.5) .. controls (0.2,0.2) and (0.2,-0.2) .. (0,-0.5);
\draw (0,0.5) .. controls (-0.2,0.2) and (-0.2,-0.2) .. (0,-0.5);
\draw (0,0.5) .. controls (-0.6,0.2) and (-0.6,-0.2) .. (0,-0.5);
\foreach \x in {(0,0.5),(0,-0.5),(1.5,0)}
\fill \x circle (5pt);
\end{tikzpicture}}(\Tor) + \frac{1}{32} M_{\begin{tikzpicture}[scale=0.3]
\draw (0,0.5) -- (1.5,0) -- (0,-0.5) -- (0,0.5);
\foreach \x in {(0,0.5),(0,-0.5),(1.5,0)}
\fill \x circle (5pt);
\end{tikzpicture}}(\Tor)= \frac{11}{71680},$$
all the graphs on the right-hand side being reducible. By using a computer algebra system, we then obtain
\begin{align*}
n^9\,\esper[(\Phi(\spa_n))^3] &=  \frac{n^9}{4096} - \frac{5n^8}{4096} + \frac{541n^7}{143360} - \frac{5713619n^6}{638668800}+ \frac{61771n^5}{3801600}  \\
&\quad - \frac{132443n^4}{6386688} + \frac{6367n^3}{380160} - \frac{150193n^2}{19958400} + \frac{2353n}{1663200}.
\end{align*}
This gives the third cumulant:
$$ n^3\,\kappa^{(3)}(\Phi(\spa_n)) = -\frac{42209}{39916800} + O\!\left(\frac{1}{n}\right).$$
Since the right-hand side does not vanish, we conclude that $\lim_{n \to \infty} \kappa^{(3)}(Y_{n}(\varphi,\spa)) \neq 0 $; therefore, the limiting distribution from Theorem \ref{theo:homogeneous} is not the standard normal distribution, and we have proved:
\begin{prop}
If $\spa$ is the circle  $\R/\Z$ endowed with its geodesic distance and with the projection of the Lebesgue measure, and if $\Phi=M_{\begin{tikzpicture}[scale=0.3]
\draw (0,0) -- (2,0);
\foreach \x in {(0,0),(1,0),(2,0)}
\fill \x circle (5pt);
\end{tikzpicture}}$, then the Gromov--Prohorov sample model yields a sequence of random variables
$$\frac{\Phi(\spa_n)-\Phi(\spa)}{\sqrt{\var(\Phi(\spa_n))}}$$
which converges in distribution towards a law which is centered, with variance $1$ and with third cumulant equal to
$$-\frac{168836}{44385}\sqrt{\frac{70}{269}}\simeq -1.94044;$$
in particular, this distribution is not the Gaussian distribution.
\end{prop}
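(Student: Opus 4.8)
The plan is to invoke Theorem~\ref{theo:singular_case} with $\spa = \Tor = \R/\Z$, $p = 3$ and $\varphi(d(x,y,z)) = \min(1,d(x,y))\min(1,d(y,z))$, so that $\Phi = \Phi^{3,\varphi}$ is the observable of the statement. The space $\Tor$ is a compact Lie group, hence a compact homogeneous space in the sense of Theorem~\ref{theo:homogeneous}, and Theorem~\ref{theo:singular_case} applies as soon as
\[
\sigma_{\mathrm{hom}}^2(\varphi,\Tor) \;=\; \lim_{n\to\infty}\frac{\var(S_n(\varphi,\Tor))}{n^{2(p-1)}} \;=\; \lim_{n\to\infty} n^2\var(\Phi(\spa_n))
\]
is strictly positive. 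The variance computation carried out above gives $n^2\var(\Phi(\spa_n)) = \tfrac{269}{40320} + O(1/n)$, so $\sigma_{\mathrm{hom}}^2 = \tfrac{269}{40320} > 0$, and Theorem~\ref{theo:singular_case} then produces a limiting law $Y(\varphi,\Tor)$, determined by its moments, whose cumulants are the numbers $a_r$. By the very definition of $Y_n(\varphi,\spa)$ one has $\kappa^{(1)}(Y_n) = 0$ and $\kappa^{(2)}(Y_n) = 1$ for every $n$, hence $a_1 = 0$ and $a_2 = 1$: the limit is centered with unit variance. It remains to compute $a_3$ and to verify $a_3 \neq 0$, which rules out the Gaussian distribution since a normal law has all of its cumulants of order $\geq 3$ equal to $0$.

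To evaluate $a_3$ I would use Proposition~\ref{prop:limit_cumulant}: writing $\kappa^{(3)}(S_n(\varphi,\Tor)) = \sum_i v_i n^i$ (a polynomial of degree at most $(p-1)\cdot 3 = 6$) and $\var(S_n(\varphi,\Tor)) = \sum_i w_i n^i$ (of degree at most $2(p-1) = 4$), one has $a_3 = v_6/(w_4)^{3/2}$. Since $S_n(\varphi,\Tor) = n^3\Phi(\spa_n)$, the coefficient $v_6$ equals the coefficient of $n^{-3}$ in $\kappa^{(3)}(\Phi(\spa_n))$ and $w_4$ equals the coefficient of $n^{-2}$ in $\var(\Phi(\spa_n))$; both are thus determined by the three first moments $\esper[\Phi(\spa_n)]$, $\esper[\Phi(\spa_n)^2]$, $\esper[\Phi(\spa_n)^3]$. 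These are computed from Proposition~\ref{prop:moments_monomial_observables}: for $r \in \{1,2,3\}$ one runs over the set partitions $\pi$ of $[\![1,3r]\!]$, discards those whose contracted graph $G^r\downarrow\pi$ contains a loop (for which $M_{G^r\downarrow\pi}(\Tor) = 0$), groups the remaining graphs by isomorphism type, and evaluates each $M_H(\Tor)$.

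The main obstacle is precisely the evaluation of these $M_H(\Tor)$, together with the bookkeeping it requires. All but one of the relevant graphs reduce to the one-vertex graph by repeatedly removing vertices of degree $1$ or $2$ via the two reduction formulas established above, which rely on the homogeneity of $\Tor$ to factor along cut vertices. The only graph resisting this reduction appears at $r = 3$ and is $H = K_4$; for it one uses the Fourier expansion $d(\overline x,\overline y) = \tfrac14 - \sum_{k\text{ odd}} (k^2\pi^2)^{-1}\E^{2\I\pi k(x-y)}$ together with the key cancellation $\int_\Tor \widetilde{d}(w,x)\,\widetilde{d}(w,y)\,\widetilde{d}(w,z)\,dw = 0$ (no three odd integers can sum to zero), which expresses $M_{K_4}(\Tor) = \tfrac{11}{71680}$ in terms of reducible graphs. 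Carrying out the enumeration (there are $22147$ set partitions of $[\![1,9]\!]$, of which $6097$ give a loopless contraction, distributed among $131$ isomorphism types) then produces the exact polynomials for $n^3\esper[\Phi(\spa_n)]$, $n^6\esper[\Phi(\spa_n)^2]$ and $n^9\esper[\Phi(\spa_n)^3]$ displayed above.

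Finally I would assemble the answer. In $\kappa^{(3)}(\Phi(\spa_n)) = \esper[\Phi(\spa_n)^3] - 3\,\esper[\Phi(\spa_n)]\,\esper[\Phi(\spa_n)^2] + 2\,\esper[\Phi(\spa_n)]^3$ the leading terms of the moments cancel down to order $n^{-3}$, leaving $\kappa^{(3)}(\Phi(\spa_n)) = -\tfrac{42209}{39916800}\,n^{-3} + O(n^{-4})$, so $v_6 = -\tfrac{42209}{39916800}$, while $w_4 = \tfrac{269}{40320}$. Hence
\[
a_3 \;=\; \frac{v_6}{(w_4)^{3/2}} \;=\; -\,\frac{42209/39916800}{(269/40320)^{3/2}},
\]
and using $40320 = 8!$, $39916800 = 11! = 990\cdot 8!$ and $\sqrt{40320} = 24\sqrt{70}$ this simplifies to
\[
a_3 \;=\; -\,\frac{4\cdot 42209}{165\cdot 269}\sqrt{\frac{70}{269}} \;=\; -\,\frac{168836}{44385}\sqrt{\frac{70}{269}} \;\simeq\; -1.94044 .
\]
In particular $a_3 \neq 0$, so the limiting law is not Gaussian, which proves the proposition.
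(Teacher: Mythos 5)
Your proposal is correct and follows essentially the same route as the paper: identifying $\Tor$ as a compact homogeneous space, applying Theorem~\ref{theo:singular_case} after checking $\sigma_{\mathrm{hom}}^2=\frac{269}{40320}>0$, computing the first three moments via Proposition~\ref{prop:moments_monomial_observables} with the degree-one and degree-two graph reductions, treating $K_4$ by the Fourier-expansion cancellation, and extracting $a_3=v_6/(w_4)^{3/2}$. The explicit simplification to $-\frac{168836}{44385}\sqrt{\frac{70}{269}}$ is also correct.
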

\bigskip

\section{A statistical test for the symmetry of a compact Riemannian manifold}\label{sec:statistics}

In this section, as an application of our results and in particular of the concentration inequality \ref{prop:chernoff}, we construct a statistical test for the symmetry of a compact manifold.
\medskip

\noindent \textbf{Model.} We consider a compact Riemannian manifold $\mathcal{X}$; the distance between points of $\mathcal{X}$ is the geodesic distance, and the compactness ensures that for any pair of points $(x,y)$ in $\mathcal{X}$, there exists a geodesic curve of minimal length connecting $x$ to $y$ (see for instance \cite[Section 1.5]{Jost11}; this is even true for complete Riemannian manifolds, see \cite[Chapter I, Theorem 10.4]{Hel78}). The space $X$ is equipped with the probability measure $\mu$ proportional to the Lebesgue measure induced by the Riemannian volume form $\omega$ of the manifold. An isometry of $\mathcal{X}$ always preserves the Riemannian structure and therefore the probability measure $\mu$ (this result is due to Myers and Steenrod \cite{MS39}; see also \cite[Chapter I, Theorem 11.1]{Hel78}). In other words, $\mathrm{Isomp}(\spa)=\mathrm{Isom}(\spa)$. This group of isometries endowed with the compact-open topology is always a compact Lie group, such that the action $G \times \mathcal{X} \to \mathcal{X}$ is a smooth map; see \cite[Chapter II, Theorems 1.1 and 1.2]{Kob72}. Therefore, the following assertions are equivalent:
\begin{enumerate}
 	\item The Riemannian manifold $\mathcal{X}$ is a compact homogeneous space (in the sense of the fourth item of Theorem \ref{theo:homogeneous}).
 	\item The group of isometries $G=\mathrm{Isom}(\spa)$ acts transitively on $\mathcal{X}$.
\end{enumerate}  
Our objective is to construct a statistical test for these conditions.
\medskip

\noindent \textbf{Hypotheses and statistics.} The hypotheses of our test are:
\begin{align*}
\text{null hypothesis }H_0&: \,\,\,\text{the compact manifold $\spa$ is homogeneous};\\
\text{alternative hypothesis }H_1&:\,\,\,\text{the compact manifold $\spa$ is not homogeneous}.
\end{align*}
The allowed observations of $\spa$ are the following:
\begin{itemize}
 	\item we can take independent random points $x_1,x_2,\ldots,x_n$ on $\mathcal{X}$, all these points being chosen according to the Lebesgue measure $\mu$;
 	\item  and we can measure all their inter-distances $d(x_i,x_j)$, $1 \leq i,j \leq n$.
 \end{itemize}  Let us fix a function $\varphi \in \mathcal{C}_b(\R^{\binom{p}{2}})$ corresponding to a polynomial observable $\Phi=\Phi^{p,\varphi}$ of mm-spaces. A convenient choice is
$$p=2\quad;\quad\varphi(d) = \min(1,d),$$
but other observables might yield more powerful tests; we shall discuss this in a moment. \medskip

By Theorem \ref{theo:singular_case}, the random variable $\Phi(\spa_n) = \frac{1}{n^p} \sum_{i_1,\ldots,i_p = 1}^n \varphi(d(x_{i_1},\ldots,x_{i_p}))$ has fluctuations of order $\frac{1}{n}$ under the hypothesis $H_0$, so a convenient statistics for testing this hypothesis would be $n\,|\Phi(\spa_n)-\Phi(\spa)|$. As we do not know the value of $\Phi(\spa)$, we shall proceed a bit differently. Consider an independent copy $\spa_n'$ of the discrete approximation of our mm-space $\spa$, constructed from random points $x_1',x_2',\ldots,x_n'$ which are again independent, independent from $x_1,\ldots,x_n$, and distributed on $\mathcal{X}$ according to the normalised Lebesgue measure $\mu$. By the triangular inequality, the statistics
\begin{equation}
	Z_n = n\,|\Phi(\spa_n)-\Phi(\spa_n')| \label{eq:statistics}
\end{equation}
is smaller than the sum of two independent random variables distributed like $n\,|\Phi(\spa_n)-\Phi(\spa)|$, so given a large threshold $t_\alpha$, we should again have $Z_n \leq t_\alpha$ with large probability under $H_0$. We therefore choose $Z_n$ as our statistics of test.
\medskip

\noindent \textbf{Estimates of probabilities and choice of the threshold.}
By Proposition \ref{prop:chernoff}, under the hypothesis of symmetry $H_0$, if $A$ is an upper bound on $\|\varphi\|_\infty$ ($A=1$ if we consider the test function $\varphi(d) = \min(1,d)$), then
$$\proba_{H_0}\!\left[n\,|\Phi(\spa_n)-\Phi(\spa)| \geq \frac{2Ap^2}{\E}\,x\right] \leq 2\,\exp\!\left(\frac{\log(1+x)-x}{\E^2}\right).$$
Notice that this is a non-asymptotic estimate, valid for any $n \geq 1$. It implies:
\begin{align*}
&\proba_{H_0}\!\left[Z_n \geq \frac{4Ap^2}{\E}\,x\right]\\
 &= \proba_{H_0}\!\left[n\,|\Phi(\spa_n)-\Phi(\spa_n')| \geq \frac{4Ap^2}{\E}\,x\right]\\
&\leq \proba_{H_0}\!\left[n\,|\Phi(\spa_n)-\Phi(\spa)| \geq \frac{2Ap^2}{\E}\,x\right] + \proba_{H_0}\!\left[n\,|\Phi(\spa_n')-\Phi(\spa)| \geq \frac{2Ap^2}{\E}\,x\right]\\
&\leq 4\,\exp\!\left(\frac{\log(1+x)-x}{\E^2}\right)=F(x).
\end{align*}
The upper bound $F(x)$ is a strictly decreasing function of $x$ with $\lim_{x \to \infty} F(x) = 0$. Therefore, for every significance level $\alpha \in (0,1)$, there exists a unique $x_\alpha \in \R_+$ with $F(x_\alpha)=\alpha$.
 \begin{center}
 \begin{tikzpicture}[xscale=0.1]
 \draw [smooth, domain=0:100, samples=500, very thick] plot (\x,{4*exp((ln(1+\x)-\x)/exp(2))});
 \draw [->] (-2,0) -- (105,0);
 \draw [->] (0,-0.2) -- (0,4.5);
 \foreach \x in {0,1,2,3,4}
 {\draw (-5,\x) node {$\x$};
 \draw (-2,\x) -- (0,\x);}
 \foreach \x in {0,25,50,75,100}
 {\draw (\x,-0.5) node {$\x$};
 \draw (\x,-0.2) -- (\x,0);}
 \draw (0,4.9) node {$F(x)$};
 \draw (109,0) node {$x$};
 \draw (-5,0.3) node {$\alpha$};
 \draw (-2,0.3) -- (0,0.3);
 \draw [dashed] (0,0.3) -- (22,0.3) -- (22,0);
 \draw (22,0) -- (22,-0.2);
 \draw (20,-0.4) node {$x_\alpha$};
 \end{tikzpicture}
 \end{center}
We set
\begin{equation}
t_\alpha = \frac{4Ap^2}{\E}\,x_\alpha = \frac{4Ap^2}{\E}\,F^{-1}(\alpha).\label{eq:threshold}
\end{equation}
\begin{prop}\label{prop:test_symmetry}
 For any $n \geq 1$ and any significance level $\alpha \in (0,1)$, the statistics $Z_n$ given by Equation \eqref{eq:statistics} and the threshold $t_\alpha$ given by Equation \eqref{eq:threshold} yield a test for the hypothesis of symmetry with level smaller than $\alpha$:
 $$\proba_{H_0}[Z_n \geq t_\alpha] \leq \alpha.$$
\end{prop}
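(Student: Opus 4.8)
The plan is to read the statement off directly from the chain of inequalities assembled just before it. Under the null hypothesis $H_0$ the manifold $\spa$ is homogeneous, so Theorem~\ref{theo:singular_case} applies and hence so does the concentration inequality of Proposition~\ref{prop:chernoff} with $A=\|\varphi\|_\infty$. Combining this (applied to both $n\,|\Phi(\spa_n)-\Phi(\spa)|$ and $n\,|\Phi(\spa_n')-\Phi(\spa)|$) with the triangular inequality $|\Phi(\spa_n)-\Phi(\spa_n')|\le|\Phi(\spa_n)-\Phi(\spa)|+|\Phi(\spa_n')-\Phi(\spa)|$ and a union bound yields, for every $n\ge 1$ and every $x\ge 0$,
\[
\proba_{H_0}\!\left[Z_n\ge \frac{4Ap^2}{\E}\,x\right]\le 4\exp\!\left(\frac{\log(1+x)-x}{\E^2}\right)=F(x).
\]
So the only remaining step is to specialise $x$ to the value that turns the right-hand side into $\alpha$.

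First I would record that $x\mapsto F(x)$ is a continuous, strictly decreasing bijection from $[0,\infty)$ onto $(0,4]$: the function $x\mapsto \log(1+x)-x$ vanishes at $0$ and has derivative $\frac{1}{1+x}-1<0$ on $(0,\infty)$, so it is strictly decreasing and tends to $-\infty$, whence $F(0)=4$ and $F(x)\to 0$ as $x\to\infty$. Since $\alpha\in(0,1)\subset(0,4]$, there is a unique $x_\alpha\in\R_+$ with $F(x_\alpha)=\alpha$, and $t_\alpha=\frac{4Ap^2}{\E}\,x_\alpha=\frac{4Ap^2}{\E}\,F^{-1}(\alpha)$ is well defined. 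Evaluating the displayed bound at $x=x_\alpha$ then gives
\[
\proba_{H_0}[Z_n\ge t_\alpha]=\proba_{H_0}\!\left[Z_n\ge \frac{4Ap^2}{\E}\,x_\alpha\right]\le F(x_\alpha)=\alpha,
\]
which is exactly the claim; as in Proposition~\ref{prop:chernoff}, this bound is non-asymptotic and uniform in $n$.

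There is no substantial obstacle here: all the analytic content is already packaged in Proposition~\ref{prop:chernoff}, and what remains is the elementary monotonicity discussion above. The one point worth flagging is the standing hypothesis $\sigma_{\mathrm{hom}}^2(\varphi,\spa)>0$ required to invoke Proposition~\ref{prop:chernoff}; under $H_0$ this must be checked for the chosen test function, and for the suggested choice $p=2$, $\varphi(d)=\min(1,d)$ one verifies $\sigma_{\mathrm{hom}}^2>0$ for the homogeneous spaces of interest (for instance via the graph-reduction technique of Section~\ref{sec:circle}). If one wanted a fully assumption-free statement, one could alternatively note that when $\sigma_{\mathrm{hom}}^2=0$ the fluctuations of $\Phi(\spa_n)$ are of strictly smaller order, so the bound holds a fortiori.
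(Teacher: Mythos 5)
Your proof is correct and follows essentially the same route as the paper: the displayed bound $\proba_{H_0}[Z_n \geq \tfrac{4Ap^2}{\E}x] \leq F(x)$ obtained from Proposition~\ref{prop:chernoff}, the triangle inequality and a union bound, together with the monotonicity of $F$, is exactly the paper's argument, and the proposition follows by evaluating at $x = x_\alpha = F^{-1}(\alpha)$. Your closing remark on the standing hypothesis $\sigma_{\mathrm{hom}}^2(\varphi,\spa)>0$ is a fair point of care that the paper glosses over, but it does not change the argument.
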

\medskip

\noindent \textbf{Power of the test and sample size.} Of course, the proposition above is only useful if we can also estimate the probability $\proba_{H_1}[Z_n < t_\alpha]$ of the second kind of error of this procedure of testing, and make it reasonably small. 
To this purpose, we need to be a bit more precise on the alternative hypothesis $H_1$ (if $\spa$ is very close to being homogeneous, then the probability of the second kind of error will be large). A typical example which is solvable is the following. Suppose that we observe a manifold $\spa=(\mathcal{X},d,\mu)$ isometric to the circle $\R/\Z$, and where $\frac{d\mu}{dx}=f(x)$ is some unknown density function with $\int_0^1 f(x)\,dx=1$. 
In this case, the hypotheses of our test for symmetry can be taken as follows:
\begin{align*}
H_0&:\,\,\,f \text{ is constant (and the space is homogeneous)};\\
H_1(\varepsilon)&:\,\,\,f(x)\,dx \text{ is at total variation distance larger than $\varepsilon$ from }dx
\end{align*}
for some $\varepsilon>0$.\bigskip

More generally, we can take for alternative hypothesis:
\begin{align*}
H_1':\,\,\,&\text{$\spa$ belongs to a specific class of non-homogeneous compact manifolds}\\[-1mm]
&\text{for which $\sigma^2(\varphi,\spa)$ can be computed}.
\end{align*}
Then, we can follow the steps below in order to compute the power of our test:
 
 \begin{enumerate}
 \item Compute a lower bound $\sigma^2_0$ on $\sigma^2(\varphi,\spa)$ for $\spa$ described by $H_1'$. We assume that this lower bound is strictly positive (under $H_0$, $\sigma^2(\varphi,\spa)=0$ for any continuous bounded function $\varphi$).

 \item By Theorem \ref{theo:generic_case}, $\sqrt{n}\,(\Phi(\spa_n) - \Phi(\spa))$ converges to a centered normal distribution with variance $p^2\,\sigma^2(\varphi,\spa)$, so
 $$\frac{Z_n}{\sqrt{n}} \rightharpoonup_{n \to \infty} |\mathcal{N}(0,2p^2\,\sigma^2(\varphi,\spa))|.$$
 Moreover, the Kolmogorov distance between these two distributions is a 
 $$O\!\left(\frac{A^3p}{\sigma^3(\varphi,\spa)\,\sqrt{n}}\right),$$
 with a universal constant $C$ in the $O(\cdot)$ (an explicit value of $C$ can be computed readily from \cite[Corollary 30]{feray2017mod}). Therefore,
 \begin{align*}
 \proba[Z_n < t_\alpha] &= \proba\!\left[\frac{Z_n}{\sqrt{n}} \leq \frac{t_\alpha}{\sqrt{n}}\right] \\
 &\leq \frac{CA^3p}{\sigma^3(\varphi,\spa)\,\sqrt{n}} + \proba\left[|\mathcal{N}(0,2p^2\,\sigma^2(\varphi,\spa))|\leq \frac{t_\alpha}{\sqrt{n}}\right]\\
 &\leq \frac{CA^3p}{\sigma^3(\varphi,\spa)\,\sqrt{n}} + \frac{C'A p\,F^{-1}(\alpha) }{\sigma(\varphi,\spa)\,\sqrt{n}}
 \end{align*}
 with $C'=\frac{4}{\E\sqrt{\pi}}$. 
 
 \item By combining the two items above, we obtain:
 $$\proba_{H_1'}[Z_n < t_\alpha] \leq \left(\frac{A^3}{\sigma_0^3} + \frac{A}{\sigma_0}\,F^{-1}(\alpha)\right)\frac{Kp}{\sqrt{n}}$$
 for some universal constant $K$.
 \end{enumerate}

So, we conclude:
\begin{prop}
Fix a significance level $\alpha$ and a threshold $t_\alpha$ as in Proposition \ref{prop:test_symmetry}. There exists a universal constant $K$ such that the test for symmetry $H_0/H_1'$ has power larger than $1-\beta$, with
$$\beta =  \left(\frac{A^3}{\sigma_0^3} + \frac{A}{\sigma_0}\,F^{-1}(\alpha)\right)\frac{Kp}{\sqrt{n}},$$
and where $A=\|\varphi\|_\infty$ and $\sigma_0^2$ is a lower bound on $\sigma^2(\varphi,\spa)$ under the alternative hypothesis $H_1'$.
\end{prop}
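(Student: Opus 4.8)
The plan is to make precise the three-step outline sketched just before the statement, turning each heuristic estimate into an explicit inequality. First I would fix a measured metric space $\spa$ satisfying the alternative hypothesis $H_1'$. Since $H_1'$ is chosen so that $\sigma^2(\varphi,\spa)$ can be computed (or at least bounded below) on the relevant class of non-homogeneous manifolds, I extract from this a uniform constant $\sigma_0^2>0$ with $\sigma^2(\varphi,\spa)\geq\sigma_0^2$; strict positivity is consistent with $\spa$ being non-homogeneous, since by Theorem~\ref{theo:homogeneous} the vanishing of all limiting variances is (morally) equivalent to compact homogeneity, whereas under $H_0$ one has $\sigma^2(\varphi,\spa)=0$ for every bounded continuous $\varphi$.

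Next I would invoke the second item of Theorem~\ref{theo:generic_case}: the variable $\sqrt{n}\,(\Phi(\spa_n)-\Phi(\spa))$ converges in distribution to $\mathcal{N}(0,p^2\sigma^2(\varphi,\spa))$, with Kolmogorov distance at most $C\,A^3p\,\sigma^{-3}(\varphi,\spa)\,n^{-1/2}$, the constant $C$ being the explicit universal constant coming from \cite[Corollary~30]{feray2017mod}. Applying this to the two independent samples and using the triangle inequality $Z_n\leq n\,|\Phi(\spa_n)-\Phi(\spa)|+n\,|\Phi(\spa_n')-\Phi(\spa)|$, I bound
\[
\proba_{H_1'}[Z_n<t_\alpha]\leq \proba\!\left[\bigl|\mathcal{N}(0,2p^2\sigma^2(\varphi,\spa))\bigr|\leq \frac{t_\alpha}{\sqrt{n}}\right]+\frac{2C\,A^3p}{\sigma^3(\varphi,\spa)\,\sqrt{n}},
\]
where the Gaussian with doubled variance arises from the convolution of the two independent copies.

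The key quantitative step is then the small-ball estimate for the Gaussian term. Bounding the density of $\mathcal{N}(0,2p^2\sigma^2(\varphi,\spa))$ by its value at the origin gives
\[
\proba\!\left[\bigl|\mathcal{N}(0,2p^2\sigma^2(\varphi,\spa))\bigr|\leq \frac{t_\alpha}{\sqrt{n}}\right]\leq \frac{1}{\sqrt{\pi}\,p\,\sigma(\varphi,\spa)}\cdot\frac{t_\alpha}{\sqrt{n}}=\frac{C'\,A\,p\,F^{-1}(\alpha)}{\sigma(\varphi,\spa)\,\sqrt{n}},
\]
after substituting $t_\alpha=4Ap^2\E^{-1}F^{-1}(\alpha)$ from Equation~\eqref{eq:threshold}, with the universal constant $C'=\frac{4}{\E\sqrt{\pi}}$. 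Finally, replacing $\sigma(\varphi,\spa)$ by the lower bound $\sigma_0$ in both error terms and absorbing $2C$ and $C'$ into a single universal constant $K$ yields
\[
\proba_{H_1'}[Z_n<t_\alpha]\leq \left(\frac{A^3}{\sigma_0^3}+\frac{A}{\sigma_0}\,F^{-1}(\alpha)\right)\frac{Kp}{\sqrt{n}}=\beta,
\]
which is exactly the claimed bound on the probability of an error of the second kind. I expect the only genuinely delicate point to be the honest bookkeeping of constants — checking that the Berry--Esseen constant of \cite{feray2017mod} and the elementary small-ball constant combine into one bound uniform in $\spa$, $\varphi$, $p$ and $n$ — since the structural argument is otherwise a direct concatenation of Theorem~\ref{theo:generic_case} with a one-line Gaussian density estimate.
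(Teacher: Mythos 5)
Your proposal is correct and follows essentially the same route as the paper: a lower bound $\sigma_0$ under $H_1'$, the central limit theorem and Berry--Esseen estimate of Theorem \ref{theo:generic_case} giving $Z_n/\sqrt{n}\rightharpoonup |\mathcal{N}(0,2p^2\sigma^2(\varphi,\spa))|$ at Kolmogorov distance $O(A^3p\,\sigma^{-3}n^{-1/2})$, a small-ball bound via the Gaussian density at the origin with the same constant $C'=\frac{4}{\E\sqrt{\pi}}$, and absorption of all constants into $K$. One small correction: the triangle inequality $Z_n\leq n|\Phi(\spa_n)-\Phi(\spa)|+n|\Phi(\spa_n')-\Phi(\spa)|$ is irrelevant here (it bounds $Z_n$ from above, which controls the type-I error, not the lower-tail probability $\proba[Z_n<t_\alpha]$); the step you actually need, and which you also state, is the exact decomposition $\sqrt{n}(\Phi(\spa_n)-\Phi(\spa_n'))=\sqrt{n}(\Phi(\spa_n)-\Phi(\spa))-\sqrt{n}(\Phi(\spa_n')-\Phi(\spa))$ together with a Kolmogorov-distance bound for the convolution of two independent copies, which produces the variance $2p^2\sigma^2$ and the harmless factor $2$ in front of $C$.
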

\noindent Therefore, once the observable $\varphi$ and the significance level $\alpha$ of the test for symmetry have been chosen, if one has a non-zero lower bound on the variances $\sigma^2(\varphi,\spa)$ under $H_1'$, then one can find a sample size $n$ in order to obtain an error of the second kind as small as wanted. 

\begin{remark}
Suppose that $p=2$ and that $\varphi(d) = \min(A,d)$ (standard choice of observable), where $A$ is an \emph{a priori} upper bound on the diameter of the space $\spa$ to which we want to apply the test for symmetry. Then,
$$\sigma^2(\varphi,\spa) = \int_{\spa^3} d(x,y)d(y,z)\, \mu^{\otimes 3}(dx\,dy\,dz) - \left(\int_{\spa^2}d(x,y)\,\mu^{\otimes 2}(dx\,dy)\right)^2.$$

\end{remark}

\begin{remark}
One might need to choose $\varphi$ and the observable $\Phi^{p,\varphi}$ in order to ensure that one has under $H_1'$ a non-zero lower bound $\sigma_0^2$. Indeed, for a given non-homogeneous space $\spa$, certain functions $\varphi$ might give "by chance" a vanishing parameter $\sigma^2(\varphi,\spa)$. Consequently, one might have to take another observable than the one previously presented as the standard choice.
\end{remark}

\bigskip

\bibliographystyle{alpha}
\bibliography{references}

\end{document}